\newtheorem{theorem}{Theorem}[section]
\newtheorem{lemma}[theorem]{Lemma}
\newtheorem{claim}[theorem]{Claim}
\newtheorem{corollary}[theorem]{Corollary}
\newtheorem{proposition}[theorem]{Proposition}
\newtheorem{definition}[theorem]{Definition}
\newtheorem{assumption}[theorem]{Assumption}
\DeclareMathOperator{\argmin}{\mbox{argmin}}
\def\argmin{\mathop{\rm argmin}}
\newcommand{\dist}{{\rm dist}}
\numberwithin{equation}{section}
\def \endprf{\hfill {\vrule height6pt width6pt depth0pt}\medskip}
\newenvironment{proof}{\noindent {\bf Proof} }{\endprf\par}
\title{\bf A Proximal Alternating Direction Method of Multiplier for Linearly Constrained Nonconvex Minimization\thanks{This research is supported in part by the NSFC grants 61731018 (key project) and 61571384, and by the Peacock project of Shenzhen Municipal Government. }}
\author{Jiawei Zhang\thanks{Shenzhen Research Institute of Big Data, The Chinese University of Hong Kong, Shenzhen, China.
	Email: luozq@cuhk.edu.cn}
\ and \
%\thanks{}
Zhi-Quan Luo$^\dagger$}
\begin{document}
\maketitle
\begin{abstract}
  Consider the minimization of a nonconvex differentiable function over a polyhedron. A popular primal-dual first-order method for this problem is to perform a gradient projection iteration for the augmented Lagrangian function and then update the dual multiplier vector using the constraint residual. However, numerical examples show that this approach can exhibit ``oscillation" and may not converge. In this paper, we propose a proximal alternating direction method of multipliers for the multi-block version of this problem. A distinctive feature of this method is the introduction of a ``smoothed" (i.e., exponentially weighted) sequence of primal iterates, and the inclusion, at each iteration, to the augmented Lagrangian function a quadratic proximal term centered at the current smoothed primal iterate. The resulting proximal augmented Lagrangian function is inexactly minimized (via a gradient projection step) at each iteration while the dual multiplier vector is updated using the residual of the linear constraints. When the primal and dual stepsizes are chosen sufficiently small, we show that suitable ``smoothing" can stabilize the ``oscillation", and the iterates of the new proximal ADMM algorithm converge to a stationary point under some mild regularity conditions.

The iteration complexity of our algorithm for finding an $\epsilon$-stationary solution is $\mathcal{O}(1/\epsilon^2)$, which improves the best known complexity of $\mathcal{O}(1/\epsilon^3)$ for the problem under consideration. Furthermore, when the objective function is quadratic, we establish the linear convergence of the algorithm. Our proof is based on a new potential function and a novel use of error bounds.
\end{abstract}

% REQUIRED

\section{Introduction}
Consider the following linearly constrained optimization problem:
\begin{equation}
\label{P}
\begin{array}{ll}
\mbox{minimize}& f(x)\\ [5pt]
\mbox{subject to} & Ax=b,\ x\in P,
\end{array}
\end{equation}
where $f$ is differentiable (not necessarily convex)and
\begin{equation}\label{eq:box}
P:=\left\{x\in\mathbb{R}^n\mid \ell_i\le x_i\le u_i,\ i=1,2,...,n \right\}
\end{equation}
is a bounded box with $\ell_i<u_i$ for all $i$, and the matrix $A$ has dimension $\mathbb{R}^{m\times n}$.
Problems of the form \eqref{P} arise in many applications involving big data, including nonnegative matrix factorization \cite{Zhang-10, Sun-Fevotte-14, Hajinezhad-16}, phase retrieval \cite{Wen-12}, distributed matrix factorization \cite{Ling-Xu-Yin-Wen-12}, polynomial optimization \cite{Jiang-Ma-Zhang-13}, asset allocation \cite{Wen-13}, zero variance discriminant analysis \cite{Ames-Hong-14}, to name just a few. A popular approach to solve problem \eqref{P} is to dualize the linear equality constraint and apply a primal-dual type algorithm to the resulting augmented Lagrangian function. Such approach is particularly attractive when the objective function $f(x)$ has a separable structure since in this case the corresponding primal minimization problem can be decomposed and often times solvable efficiently in parallel, while the dual update can be carried out in closed form.
The local convergence analysis of the classical augmented Lagrangian method was for smooth objective function with smooth equality constraint \cite{bertsekas}. Global convergence of a primal-dual method for smooth objective and smooth equality constraint was given recently in \cite{Hong17}. When the decision variable $x$ consists of many small variable blocks, the popular alternating direction method of multipliers (ADMM) is often the preferred algorithm to solve \eqref{P}, see \cite{boyd} for a detailed coverage of the method and many applications from a set of diverse fields. It is well known \cite{eckstein-bertsekas} that for the two-block (strongly) convex case ADMM can be viewed as a variant of proximal-point method or operator-splitting method, from which one can derive linear convergence of the method. The paper \cite{Ye} proves that the direct extension of ADMM to three-block case may not converges and state some sufficient conditions for the extended algorithm to converge.
The reference \cite{Luo-Hong12} uses the error bound analysis \cite{eb-survey} to establish the linear convergence rate of ADMM for a family of convex programming problem with any number of variable blocks with a reduced dual stepsize. However, for nonconvex problems, the convergence of the augmented Lagrangian method or ADMM has not been well understood, despite the fact they have been widely used in applications.
In \cite{Hong-Luo17}, the convergence of ADMM was established for some special nonconvex problems such as consensus-based sharing problems by using the augmented Lagrangian function as the potential function. This approach was further extended \cite{wyin16} to a larger family of nonconvex-nonsmooth problems under some technical assumption such as the prox-regularity of the objective function. The papers \cite{JiangMa16, GLI15, Loan18} proved the convergence of some inexact ADMM for certain nonsmoooth, nonconvex problems. However, these references all require at least one block of the variable to be unconstrained and a strong feasibility assumption holds, namely, suppose the linear equality constraint is $A_1x_1+A_2x_2=b$, then the image of $A_1$ is contained in the image of $A_2$ and the variable block $x_2$ does not have any other constraint.
Recently, \cite{Gao-Goldfarb-Curtis-2018} established the convergence of multi-block ADMM algorithm for the so called multi-affine constraints which are linear in each variable block but otherwise nonconvex. Similar to \cite{JiangMa16, GLI15, Loan18}, this paper also has some technical assumptions, including the similar strong feasibility constraint and that the objective function for some block must be strongly convex. Thus, the results from the existing studies \cite{JiangMa16, GLI15, Loan18, Gao-Goldfarb-Curtis-2018} do not cover the general problem \eqref{P}.
%!!!!!
The penalty method of \cite{Monteiro} does apply to the linearly constrained nonconvex problem \eqref{P}. However, the penalty method is usually slower than the method of multipliers which do not use diminishing stepsizes or any unbounded parameters. To our knowledge, the fastest first order algorithm for the linearly constrained problem \eqref{P} is the one proposed by \cite{Monteiro}, which achieves an iteration complexity $\mathcal{O}(1/\epsilon^3)$.
The contribution of this paper is as follows. We propose a proximal alternating direction method of multipliers (ADMM) to solve the linearly constrained nonconvex differentiable minimization problem \eqref{P}. A distinctive feature of the algorithm is to introduce a ``smoothed" (i.e., exponentially weighted) sequence of primal iterates, and at each iteration add to the augmented Lagrangian function an extra quadratic proximal term centered at the smoothed primal iterate. The resulting proximal augmented Lagrangian function is inexactly minimized at each iteration while the dual multiplier vector is updated using the residual of the linear constraints. The algorithm is well suited for large scale optimization involving big data, and easily extends to the multiple variable block case, resulting in a variant of the well-known ADMM algorithm. When the primal and dual stepsizes are chosen sufficiently small, we show that the iterates of the proximal ADMM algorithm converge to a stationary point of the nonconvex problem under some mild regularity conditions. Furthermore, we show that this algorithm has an iteration complexity $\mathcal{O}(1/\epsilon^2)$ to find an $\epsilon$-stationary solution of problem \eqref{P}, which improves the best known complexity of $\mathcal{O}(1/\epsilon^3)$ \cite{Monteiro}. Moreover, we present a numerical example showing that the ``smoothing" step is necessary for the convergence of the proximal ADMM when the objective function is nonconvex. Furthermore, for a quadratic objective function, we establish the linear convergence of the algorithm.
%propose a linearized proximal ADMM or augmented Lagrangian algorithm to solve problem \eqref{P}. In our algorithm, we have three sequences: the sequences for primal, dual and proximal, which are updated in closed form. Furthermore, we show that if the objective function is quadratic, we will have linear convergence. To best of our knowledge, it is the first result that proves the convergence of an algorithm that has closed form in each iteration to solve \eqref{P}.
%The contribution of this paper is as follows. We propose a proximal alternating direction method of multipliers to solve a linearly constrained nonconvex minimization problem \eqref{P}. At each iteration, the algorithm first inexactly minimizes the augmented Lagrangian function plus a quadratic proximal term centered at a smoothed primal iterate, and then updates the dual multiplier vector using the residual of the linear constraints. When the smoothing parameter, the primal and dual stepsizes are chosen sufficiently small, we show that the iterates of the resulting proximal ADMM algorithm converges to a stationary point of the nonconvex problem, provided some mild regularity conditions are satisfied. When the objective function is quadratic, we further establish the linear convergence of the algorithm. Our proof makes essential use of the error bound techniques introduced for the convergence analysis of iterative algorithms in \cite{[16],luo-tseng}.
\section{Preliminaries}
\subsection{Some notations in this paper}
We define some notations used in this paper.
\begin{itemize}
\item $\mathbb{R}^n$ is the $n-$dimension Euclidian space.
\item $\mathbb{R}^{m\times n}$ is the space of all $m\times n$ real matrix.
\item$[\cdot]_+$ that  satisfies $[x]_+=\arg\min_{\bar{x}\in P}\|x-\bar{x}\|$ is the  projection operator to the box $P$. In particular, the $i-$th coordinate of $[x]_+$ is given by
\begin{equation}
([x]_+)_i=\left\{\begin{array}{ll}
\ell_i,&\mbox{if}\ x_i<\ell_i,\\
x_i,&\mbox{if}\ x_i \in [\ell_i, u_i],\\
u_i,&\mbox{if}\ x_i>u_i.
\end{array}
\right.
\end{equation}
\item $(\cdots)_+$ is the projection operator to the non-negative orthant.
\item $\succeq$ means the coordinate-wise $\ge$ and $\preceq$ means the coordinate-wise $\le$.
\end{itemize}

\subsection{The set of stationary solutions}
We first define the solution of the problem \eqref{P} in this subsection.
Due to the linearity of the constraints, there exists a set of Lagrangian multipliers for each stationary point of \eqref{P} such that the KKT condition holds. We denote the set of stationary points of \eqref{P} by $X^*$.
Writing down the KKT condition, letting $y, \mu, \nu$ be the multipliers, we have:
\begin{eqnarray}
\nabla f(x^*)+A^Ty^*-\mu^*+\nu^*&=&0,\nonumber\\
Ax^*&=&b,\nonumber\\
\ell_i\le\ \; x_i^*&\le& u_i,\quad \mbox{for all }i\nonumber\\
\mu^*&\succeq& 0,\nonumber\\
\nu^*&\succeq& 0,\nonumber\\
\mu_i^*(\ell_i-x_i^*)&=&0, \quad \mbox{for all }i\label{comple1}\\
\nu_i^*(x_i^*-u_i)&=&0,\quad \mbox{for all }i\label{comple2},
\end{eqnarray}
where $\mu_i^*$ and $\nu_i^*$ denote the $i$-th component of $\mu^*$ and $\nu^*$ respectively.
Let $X^*, Y^*$ be the sets of all $x^*$ and $y^*$ satisfying the KKT condition. And let $W=\{(x, y)\mid x, y\ is\ a\ pair\ of\ primal-dual\ solution \}$.
%[[]]
Note that \eqref{comple1} and \eqref{comple2} are the complementarity conditions. It means that either $(\ell_i-x_i^*)$ or $\mu_i^*$ must be zero for all $i$ and similarly for $\nu^*$.
A stronger condition, which holds generically, is called ``strict complementarity condition $\Gamma$''.
%# Losing *
\begin{definition}
If for all solutions $(x^*, y^*, \mu^*, \nu^*)$ of the KKT system, for any $i$, exactly one of $\mu_i^*$ and $(\ell_i-x_i^*)$ is zero and exact one of $\nu_i^*$ and $(x_i^*-u_i)$ is zero, then we say the original problem satisfies the strict complementarity condition.
\end{definition}
\subsection{Assumptions}
%\begin{assumption}
%$f$ is L-Lipschitz and $\nabla^2 f\succeq \gamma I$ for some $\gamma\in R$.\\
%\end{assumption}
In this subsection, we give our main assumptions, which are valid in many practical problems.
\begin{assumption}
\label{feasible-int}
$ $ \\ [-10pt]
\begin{enumerate}
%[[]]
\item [{\rm (a)}] The origin is in the relative interior of the set $AP-b=\{Ax-b\mid x\in P\}$.
\item [{\rm (b)}] The strict complementarity condition holds for \eqref{P}.
\item [{\rm (c)}] The objective function $f$ is a differentiable function with Lipschitz continuous gradient
\[
\|\nabla f(x)-\nabla f(x')\|\le L\|x-x'\|,\quad \mbox{ for some }L>0 \mbox{ and } \forall\; x, x'\in P.
\]
\end{enumerate}
\end{assumption}
Note that Assumption~\ref{feasible-int}(a) is equivalent to the feasibility of \eqref{P} for all slightly perturbed $b$ from the range space of $A$; in particular it does not require the full row rank of $A$. By using Cauchy-schwarz inequality, assumption~\ref{feasible-int}(c) implies the existence of a constant $\gamma$ (possibly negative) such that
\begin{equation}\label{gamma}
\langle \nabla f(x)-\nabla f(x'),x-x'\rangle \ge \gamma \|x-x'\|^2, \quad \mbox{for all $x, x'\in P$}.
\end{equation}
Assumption~\ref{feasible-int}(b) is reasonable since the strict complementarity is valid generically, as we argue in the proposition below.
\begin{proposition}
Suppose $f(x)=g(x)+v^Tx$, $g$ is Lipschitz-differentiable and $v$ is a constant vector. If the data vector $(v, b)$ is generated from a continuous probability distribution, then with probability $1$, the strict complementarity condition holds for \eqref{P}. Here a probability distribution is said to be continuous if the probability of a Lesbergue-zero-measure set is zero.
\end{proposition}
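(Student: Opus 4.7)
The plan is to show that the ``bad'' set $B\subset\mathbb{R}^{n+m}$ of data pairs $(v,b)$ for which strict complementarity fails at some KKT point of \eqref{P} lies in a finite union of Lebesgue-null subsets of $\mathbb{R}^{n+m}$; since the distribution of $(v,b)$ is continuous, this gives $\mathbb{P}((v,b)\in B)=0$, which is exactly the claim.

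First I would stratify $B$ by the active-bound pattern and the degenerate index. For each triple $(I_\ell,I_u,i_0)$ with $I_\ell,I_u$ disjoint subsets of $\{1,\ldots,n\}$ and $i_0\in I_\ell\cup I_u$, define $B(I_\ell,I_u,i_0)$ to be the set of $(v,b)$ that admit a KKT quadruple $(x^*,y^*,\mu^*,\nu^*)$ satisfying $x_i^*=\ell_i$ on $I_\ell$, $x_i^*=u_i$ on $I_u$, $x_i^*\in(\ell_i,u_i)$ elsewhere, and satisfying either $i_0\in I_\ell$ with $\mu^*_{i_0}=0$, or $i_0\in I_u$ with $\nu^*_{i_0}=0$. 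Then $B$ is the union of these finitely many strata, so it suffices to prove each is Lebesgue null.

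For a fixed stratum, say with $i_0\in I_\ell$, I would parametrize the KKT system by $(x_N,\,y^*,\,(v_i)_{i\in J})$, where $N=\{1,\ldots,n\}\setminus(I_\ell\cup I_u)$ and $J=(I_\ell\cup I_u)\setminus\{i_0\}$. The parameter space has dimension $|N|+m+|J|=n+m-1$. Given these parameters, the remaining quantities are forced: the active coordinates of $x^*$ equal the corresponding bounds; on $N\cup\{i_0\}$ one has $\mu^*_i=\nu^*_i=0$ (by ordinary complementarity on $N$ and by the degeneracy assumption at $i_0$), so stationarity determines $v_i=-(\nabla g(x^*))_i-(A^Ty^*)_i$; the remaining multipliers $\mu^*_i$ ($i\in I_\ell\setminus\{i_0\}$) and $\nu^*_i$ ($i\in I_u$) are read off from stationarity on the active indices; and $b=Ax^*$. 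Consequently $B(I_\ell,I_u,i_0)$ is contained in the image of a globally Lipschitz map $\Phi:\mathbb{R}^{n+m-1}\to\mathbb{R}^{n+m}$---Lipschitz because $\nabla g$ is Lipschitz and $\Phi$ is otherwise affine. A standard covering argument (write the domain as a countable union of bounded cubes) then implies this image has zero Lebesgue measure in $\mathbb{R}^{n+m}$. The symmetric case $i_0\in I_u$ with $\nu^*_{i_0}=0$ is handled identically.

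The main obstacle is purely bookkeeping: one must check that the parametrization captures every KKT quadruple in the stratum---the sign conditions $\mu^*\succeq 0$, $\nu^*\succeq 0$ and the strict interior condition $x^*_i\in(\ell_i,u_i)$ for $i\in N$ impose inequality restrictions that only shrink the parameter domain (and hence the image), so they do not enlarge $B(I_\ell,I_u,i_0)$---and that the finitely many strata really exhaust $B$. Once these are verified, the finite union of null sets is null, and the proposition follows.
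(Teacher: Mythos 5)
Your proposal is correct and follows essentially the same route as the paper: stratify the bad data set by the active-bound pattern and the degenerate index, observe that the degenerate KKT system leaves only $n+m-1$ degrees of freedom, and conclude that the corresponding $(v,b)$ lie in the Lipschitz image of an $(n+m-1)$-dimensional set, hence a Lebesgue-null subset of $\mathbb{R}^{n+m}$, with a finite union over patterns. The only (cosmetic) difference is that you parametrize by $(x_N, y^*, (v_i)_{i\in J})$ and solve for the remaining data, whereas the paper maps the full primal--dual--multiplier vector $(x,y,\mu,\nu)$ restricted to an $(n+m-1)$-dimensional affine slice directly to $(-v,b)$.
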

\begin{proof}
We will use the fact that Lipschitz continuous functions map a zero-measure set to a zero-measure set.
For active sets $S_1$ and $S_2$ the KKT condition with respect to $\{1,2,...,n\}$ is
\begin{eqnarray*}
\nabla g(x)+A^Ty-\mu+\nu&=&-v\nonumber\\
Ax&=&b,\nonumber\\
x_i&=&\ell_i, \ \mu_i\ge 0, \quad i\in S_1\\
x_i&=&u_i, \ \nu_i\ge 0, \quad i\in S_2\\
x_i&\in&[\ell_i, u_i],\ \mbox{ for all }i\nonumber\\
\mu_i&=&0, \quad i\notin S_1\\
\nu_i&=&0,\quad i\notin S_2.
\end{eqnarray*}
We prove that for any $S_1, S_2\subseteq \{1,2,...,n\}$, with probability $0$, the strictly complementary condition does not hold.
Since $S_1, S_2$ have only finitely many choices, we only need to consider fixed $S_1$ and $S_2$.
Without loss of generality, assume that $x_1=\ell_1$, $\mu_1=0$ and $1\notin S_1$.
%# Is the expression right?
Consider the Lipschitz continuous map $\Phi$
$$\Phi(x, y, \mu, \nu)=(\nabla g(x)+A^Ty-\mu+\nu, Ax)$$
from the set
\begin{eqnarray*}
T=\left\{(x ,y, \mu, \nu)\right.&\mid& x_i=\ell_i, i\in S_1, \ x_i=u_i, i\in S_2, \\
&&\ell_i\le x_i\le u_i,\ \mu_i=0,\ i \notin S_1,\ \nu_i=0,\ i\notin S_2, \\
&&\left. \mu\succeq 0,\ \nu\succeq 0, x_1=\ell_1\right\}
\end{eqnarray*}
to $\mathbb{R}^{n+m}$.
Clearly $\Phi$ maps from a $n+m-1$ dimension subset to $n+m$-dimension space.
Hence, the image is zero-measure in $\mathbb{R}^{n+m}$.
Consequently, the choice for $(v, b)$ such that the solution exists is of measure zero. Hence, the probability for it is $0$ since the probability distribution is continuous.
\end{proof}
\subsection{A Proximal Inexact Augmented Lagrangian Multiplier Method}
We will state our algorithm in this subsection based on the augmented Lagrangian function.
The Augmented Lagrangian function for \eqref{P} is given by
$$L(x; y)=f(x)+y^T(Ax-b)+\frac{\Gamma}{2}\|Ax-b\|^2,$$
where $\Gamma>0$ is a constant.
The classical augmented Lagrangian multiplier method minimizes $L(x;y)$ for a fixed $y$ over the box constraint $P$, and then updates $y$ using the residual of the primal equality constraint $Ax=b$. Unfortunately, due to the nonconvexity of $f$, the exact minimization of $L(x;y)$ with respect to $x$ can be difficult. Thus, it is often more practical to minimize $L(x;y)$ inexactly with respect to $x$. In particular, we recall the following simple inexact augmented Lagrangian multiplier method (which also corresponds to the linearized ADMM algorithm when there is only one primal variable block). %The linearized augmented Lagrangian method is given by the following:
\begin{algorithm}[ht]
\caption{An Inexact Augmented Lagrangian Multiplier Method}
\label{Alg}
\begin{algorithmic}[1]
\STATE Let $\alpha>0$ and $c>0$;
\STATE Initialize $x^0,\ y^0$;
\FOR{$t=0,1,2,\ldots,$}
\STATE $y^{t+1}=y^t+\alpha(Ax^t-b)$;
\STATE $x^{t+1}=[x^t-c\nabla_x L(x^t; y^{t+1})]_+$.
\ENDFOR
% \Ensure
\end{algorithmic}
\end{algorithm}
Though easy to implement numerically, the above inexact augmented Lagrangian multiplier method can behave erratically or even diverge for nonconvex problems (see Figure 1 in Section 6 for a numerical example). To stabilize the convergence behaviour of the inexact augmented Lagrangian multiplier method, we propose a proximal version of the augmented Lagrangian multiplier method. In this new method, we introduce an exponential averaging (or smoothing) scheme to generate an extra sequence $\{z^t\}$ and insert an extra quadratic proximal term centered at $z^t$ to the augmented Lagrangian function so that the next primal iterate $x^{t+1}$ does not deviate too much from the stabilized iterate $z^t$. More specifically,
%So we need to analyse the primal-dual property of some convex function.
let
\begin{equation}\label{eq:K}
K(x, z;y)=L(x; y)+\frac{p}{2}\|x-z\|^2,
\end{equation}
where $p$ is a positive parameter. Note that the function $K$ is Lipschitz differentiable with modulus $L_K=L+p+\Gamma\sigma^2$, and can be made strongly convex in $x$ with modulus $\gamma_K=p+\gamma>0$ if $p$ is chosen to be larger than $-\gamma$.
We consider the following proximal inexact augmented Lagrangian multiplier method.
%Note that the algorithm is a special case of the following:
%\begin{algorithm}
%(1) Initialize $x^0, z^0, y^0$;\\
%(2) $y^{t+1}=y^t+\alpha(Ax^t-b)$;\\
%(3) $x^{t+1}=[x^t-c\nabla_x K(x^t, z^t; y^{t+1})]_+$;\\
%(4) $z^{t+1}=z^t+\beta(x^{t+1}-z^t)$.\\
%If we let $\beta=1$ and $x^0=z^0$, it is just the original algorithm.
%\end{algorithm}
\begin{algorithm}[ht]
\caption{A Proximal Inexact Augmented Lagrangian Multiplier Method}
\label{Alg2}
\begin{algorithmic}[1]
\STATE Let $\Gamma>0$, $\alpha>0$, $0<\beta\le 1$ and $\frac{1}{L_K}>c>0$;
\STATE Initialize $x^0\in P,\ z^0\in P,\ y^0\in \mathbb{R}^m$;
\FOR{$t=0,1,2,\ldots,$}
\STATE $y^{t+1}=y^t+\alpha(Ax^t-b)$;
\STATE $x^{t+1}=[x^t-c\nabla_x K(x^t, z^t; y^{t+1})]_+$;
\STATE $z^{t+1}=z^t+\beta(x^{t+1}-z^t)$.
\ENDFOR
% \Ensure
\end{algorithmic}
\end{algorithm}
Our main claim is that the introduction of the proximal term can ensure the global convergence of Algorithm~\ref{Alg2} for the nonconvex problem \eqref{P}.
\begin{theorem}\label{th:main}
Suppose Assumption \ref{feasible-int} holds. Moreover, suppose the parameters $c$ and $p$ are selected to satisfing $\frac{1}{L_K}>c>0, p>-\gamma$ and that the primal and dual stepsizes $\beta$ and $\alpha$ to be sufficiently small. Then the dual iterates $\{y^t\}$ are bounded. Moreover, there holds
\begin{eqnarray}
\lim_{t\to \infty}\|x^{t+1}-x^t\|&=&0,\nonumber\\
\color{black}\lim_{t\to \infty}\dist(x^t, X^*)&=&0,\nonumber\\
\lim_{t \to \infty}\dist(z^t, X^*)&=&0,\nonumber\\
\lim_{t\rightarrow \infty}\dist((x^t, y^t), W)&=&0\nonumber
\end{eqnarray}
and every limit point of the sequence $\{(x^t, y^t)\}$ generated by Algorithm \ref{Alg2} is a primal-dual stationary point of \eqref{P}.
\end{theorem}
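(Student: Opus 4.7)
The plan is to construct a Lyapunov/potential function of the form
\[
\Phi^t \;=\; K(x^t, z^t; y^t) \;+\; \rho_1\|x^t - z^t\|^2 \;+\; \rho_2\|x^t - x^{t-1}\|^2
\]
with positive constants $\rho_1, \rho_2$ to be tuned, and to prove that $\Phi^t$ is bounded below and monotonically decreases by at least a positive multiple of $\|x^{t+1}-x^t\|^2 + \|z^{t+1}-z^t\|^2$ when $c<1/L_K$, $p>-\gamma$, and $\alpha,\beta$ are sufficiently small. Boundedness from below will follow once the dual sequence $\{y^t\}$ is shown bounded (exploiting Assumption~\ref{feasible-int}(a) and the compactness of $P$), and summability will then yield $\|x^{t+1}-x^t\|\to 0$ and $\|z^{t+1}-z^t\|\to 0$. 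Since $z^{t+1}-z^t=\beta(x^{t+1}-z^t)$, this also gives $\|x^t-z^t\|\to 0$, and the dual recursion then forces $\|Ax^t-b\|\to 0$.

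\textbf{Descent lemma.} I would decompose $\Phi^{t+1}-\Phi^t$ into the three substeps of Algorithm~\ref{Alg2}. The projected gradient step on the $\gamma_K$-strongly convex, $L_K$-smooth function $K(\cdot, z^t; y^{t+1})$ produces a sufficient decrease of order $-\tfrac12(1/c - L_K)\|x^{t+1}-x^t\|^2$; the smoothing step contributes $-\tfrac{p}{2}\beta(2-\beta)\|x^{t+1}-z^t\|^2\le 0$; the dual step increases the Lagrangian by $\alpha(Ax^t-b)^T(Ax^{t+1}-b)$, and this is the term that must be absorbed. The heart of the argument is to control this dual increase by primal progress. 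I would use the optimality of the projection step,
\[
\nabla f(x^t)+A^T y^{t+1}+p(x^t-z^t)+\Gamma A^T(Ax^t-b)+\tfrac{1}{c}(x^{t+1}-x^t) \in -N_P(x^{t+1}),
\]
together with Assumption~\ref{feasible-int}(a), which serves as a Slater-type constraint qualification (in place of a full-row-rank hypothesis on $A$) and yields a bound of the form $\|y^{t+1}\| \le C_1 + C_2\bigl(\|x^{t+1}-x^t\|+\|x^t-z^t\|+\|Ax^t-b\|\bigr)$. Telescoping gives $\|y^{t+1}-y^t\|^2$ in terms of successive primal increments and residuals, and once $\alpha,\beta$ are chosen small enough the dual increase is strictly dominated by the primal and smoothing decreases. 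This simultaneously establishes boundedness of $\{y^t\}$ and monotone descent of $\Phi^t$.

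\textbf{Limit points and distance.} With the descent lemma in hand, summability delivers $\|x^{t+1}-x^t\|\to 0$, $\|x^t-z^t\|\to 0$, $\|Ax^t-b\|\to 0$, and boundedness of every iterate. Passing to the limit in the projection optimality condition, using continuity of $\nabla f$ and the fact that the normal-cone mapping $x\mapsto N_P(x)$ has closed graph on the box $P$, any cluster point $(x^\infty,y^\infty)$ of $\{(x^t,y^t)\}$ satisfies the KKT system~\eqref{comple1}--\eqref{comple2}; hence $\dist((x^t,y^t),W)\to 0$, and projecting on the primal coordinates together with $\|x^t-z^t\|\to 0$ gives $\dist(x^t,X^*)\to 0$ and $\dist(z^t,X^*)\to 0$. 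The main obstacle is the descent lemma itself: specifically, controlling $\|y^{t+1}-y^t\|$ through Assumption~\ref{feasible-int}(a) and tuning $(\alpha,\beta,\rho_1,\rho_2)$ so that the dual increase is absorbed. The smoothing step is indispensable here, as it is what makes $K(\cdot, z^t;y)$ strongly convex in $x$ (via $p>-\gamma$) and slows the primal drift enough for the dual ascent to be dominated; without it one recovers Algorithm~\ref{Alg} and the oscillation illustrated in Section~6.
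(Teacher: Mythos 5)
Your overall scaffolding (potential function, telescoping, passing to the limit in the projection optimality condition) is the right genre, but the specific potential you propose and, more importantly, the mechanism you rely on to absorb the dual ascent, do not work for this problem, and this is precisely the difficulty the paper is built to overcome. Your key claimed estimate is that the stationarity condition of the projection step plus Assumption~\ref{feasible-int}(a) yields $\|y^{t+1}\|\le C_1+C_2(\|x^{t+1}-x^t\|+\|x^t-z^t\|+\|Ax^t-b\|)$, from which $\|y^{t+1}-y^t\|$ is controlled by primal increments. That trick is the standard one for nonconvex ADMM when one variable block is unconstrained and $\mathrm{range}(A)$ conditions hold: there one solves $A^Ty^{t+1}$ explicitly from the stationarity equation. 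Here every coordinate carries a box constraint, so the optimality condition reads $\nabla f(x^t)+A^Ty^{t+1}+p(x^t-z^t)+\Gamma A^T(Ax^t-b)+\tfrac1c(x^{t+1}-x^t)\in -N_P(x^{t+1})$ with an unknown normal-cone element that cannot be separated from $A^Ty^{t+1}$; Assumption~\ref{feasible-int}(a) only gives a qualitative compactness statement (the paper's Lemma~\ref{ybounded}: if $\|Ax(y^i,z^i)-b\|\to0$ then $\{y^i\}$ is bounded), not the quantitative Lipschitz-type bound you assert. Without that bound, the term $-\alpha\|Ax^t-b\|^2$ lost in the primal descent has nothing to cancel it in your potential $K+\rho_1\|x-z\|^2+\rho_2\|x^t-x^{t-1}\|^2$, and the argument stalls. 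Relatedly, your closing step "the dual recursion then forces $\|Ax^t-b\|\to0$" from $\|x^{t+1}-x^t\|\to0$ and $\|x^t-z^t\|\to0$ does not follow; feasibility in the limit has to come out of the descent inequality itself.

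The paper's route is structurally different in two ways that you would need to import. First, the potential is $\phi^t=K(x^t,z^t;y^t)-2d(y^t,z^t)+2M(z^t)$, where $d(y,z)=\min_{x\in P}K(x,z;y)$ and $M(z)=\min_{x\in P,\,Ax=b}(f(x)+\tfrac p2\|x-z\|^2)$: the dual function $d$ \emph{increases} under the $y$-update by $\alpha(Ax^t-b)^T(Ax(y^{t+1},z^t)-b)$, and with the coefficient $-2$ this combines with the $-\alpha\|Ax^t-b\|^2$ loss to give, after completing the square, $+\alpha\|Ax(y^{t+1},z^t)-b\|^2-\alpha\|A(x^t-x(y^{t+1},z^t))\|^2$, the negative part being absorbed by the primal error bound $\|x^{t+1}-x^t\|\ge\sigma_1\|x^t-x(y^{t+1},z^t)\|$. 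Second, the residual cross term $\|x(y^{t+1},z^t)-x^*(z^t)\|^2$ generated by the proximal descent is controlled by a \emph{dual error bound} $\dist(y,Y^*(z))\le\sigma_5\|Ax(y,z)-b\|$, which is where Assumption~\ref{feasible-int}(b) (strict complementarity) and the Hoffman bound enter; since that bound is only local, the paper needs a case analysis (conditions \eqref{cond1}--\eqref{cond3}) using compactness of $P$ to handle iterates far from the solution set. Your proposal never invokes strict complementarity at all, which is a signal that the decisive difficulty has been bypassed rather than resolved.
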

Let
\begin{eqnarray}
d(y, z)&=&\min_{x\in P}K(x, z;y),\label{d}\\
x(y, z)&=&\argmin_{x\in P}K(x, z;y),\label{K}\\
M(z)&=&\min_{x\in P, Ax=b}(f(x)+\frac{p}{2}\|x-z\|^2),\label{proximal}\\
x^*(z)&=&\argmin_{x\in P,\ Ax=b}\left(f(x)+\frac{p}{2}\|x-z\|^2\right).\label{x}
\end{eqnarray}
It should be noted that if $p>-\gamma$, then $f(x)+\frac{p}{2}\|x-z\|^2$ is strongly convex, so there holds
\begin{equation}\label{bounds}
K(x,z;y)\ge d(y,z),\quad M(z)\ge d(y,z),\quad \forall \ y,z,
\end{equation}
where the first inequality is due to \eqref{d}, while the second inequality follows from the strong duality
\[
M(z)=\max_yd(y,z).
\]

Note that the subproblem of the Algorithm \ref{Alg2} is easy since it involves only projection to the box $P$.
Compared to Algorithm \ref{Alg}, the proximal inexact augmented Lagrangian method (Algorithm~\ref{Alg2}) constructs an auxiliary sequence $\{z^t\}$ which is a recursive average of the primal sequence $\{x^t\}$, and uses it to build a quadratic proximal term in the augmented Lagrangian function $L(x^t;y^t)$. Notice that the recursive averaging step is computationally simple, so Algorithm~\ref{Alg2} has a similar per-computational complexity to Algorithm~\ref{Alg}. It should be noted that this new quadratic proximal term in the augmented Lagrangian function introduces an extra term $p(x^t-z^t)$ in the gradient of $L(x^t;y^{t+1})$. This extra term is an exponentially weighted average of {\it all} the previously generated primal iterates $\{x^0,x^1,...,x^t\}$. As such, it is different from the well known ``momentum term" in the backpropagation training algorithm which is equal to the difference of the previous two primal iterates. Also, this extra term is different from the Nesterov's acceleration scheme which adds to the gradient descent direction a specific (iteration dependent) average of previous two primal iterates.
%We need to choose $\beta$ and $\alpha$ sufficiently small.
In the rest of the paper, we fix parameters $c<1/L_K$ and $p>-\gamma$.
\section{Convergence Analysis}
We will prove Theorem \ref{th:main} in this section. The proof consists of a series of lemmas. We will give the proof of the key lemmas and prove the rest in Appendix~\ref{Appendix:B}, \ref{Appendix:C} and \ref{Appendix:D}.
\subsection{Key Lemmas}
In this subsection, we will give some key lemmas that are needed to establish the main theorem.
\subsubsection{Three Descent Lemmas}\label{sub:3descent}
In this subsection we give three descent lemmas to estimate the changes in the primal, the dual and the proximal function respectively after one iteration of Algorithm~\ref{Alg2}.
\begin{lemma}[Primal Descent] \label{primal}
For any t, if $c<1/L_K,$
$$K(x^t, z^t; y^t)-K(x^{t+1}, z^{t+1}; y^{t+1})\ge \frac{1}{2c}\|x^t-x^{t+1}\|^2+\frac{p}{2\beta}\|z^t-z^{t+1}\|^2-\alpha\|Ax^t-b\|^2.$$
\end{lemma}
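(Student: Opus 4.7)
The plan is to telescope the difference $K(x^t,z^t;y^t) - K(x^{t+1},z^{t+1};y^{t+1})$ through the two intermediate points $(x^t,z^t;y^{t+1})$ and $(x^{t+1},z^t;y^{t+1})$, so that each ``leg'' corresponds to exactly one of the three update steps in Algorithm~\ref{Alg2}. This way the bound decomposes into three independent computations, one per step.

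For the dual leg, I would use the fact that $K$ is affine in $y$, so
$$K(x^t,z^t;y^t) - K(x^t,z^t;y^{t+1}) = (y^t-y^{t+1})^T(Ax^t-b) = -\alpha\|Ax^t-b\|^2$$
directly from the dual update $y^{t+1}=y^t+\alpha(Ax^t-b)$. This already produces the (negative) $-\alpha\|Ax^t-b\|^2$ term on the right-hand side.

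For the primal leg, note that viewed as a function of $x$ alone, $K(\cdot,z^t;y^{t+1})$ has Lipschitz continuous gradient with constant $L_K=L+p+\Gamma\sigma^2$, so the standard descent lemma gives
$$K(x^{t+1},z^t;y^{t+1}) \le K(x^t,z^t;y^{t+1}) + \langle \nabla_x K(x^t,z^t;y^{t+1}),\, x^{t+1}-x^t\rangle + \tfrac{L_K}{2}\|x^{t+1}-x^t\|^2.$$
The projected-gradient update $x^{t+1}=[x^t-c\nabla_x K(x^t,z^t;y^{t+1})]_+$ satisfies the first-order optimality condition of projection on $P$; testing it against $x^t\in P$ yields $\langle \nabla_x K(x^t,z^t;y^{t+1}),\, x^{t+1}-x^t\rangle \le -\tfrac{1}{c}\|x^{t+1}-x^t\|^2$. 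Combining and using $c<1/L_K$ gives a coefficient $1/c-L_K/2 \ge 1/(2c)$, which is the claimed $\tfrac{1}{2c}\|x^{t+1}-x^t\|^2$ term. Note that convexity of $K$ in $x$ is not needed here; only Lipschitz continuity of the gradient is.

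For the smoothing leg, only the proximal part $\tfrac{p}{2}\|x^{t+1}-z\|^2$ depends on $z$, so
$$K(x^{t+1},z^t;y^{t+1}) - K(x^{t+1},z^{t+1};y^{t+1}) = \tfrac{p}{2}\bigl(\|x^{t+1}-z^t\|^2 - \|x^{t+1}-z^{t+1}\|^2\bigr).$$
The update $z^{t+1}=z^t+\beta(x^{t+1}-z^t)$ gives $x^{t+1}-z^{t+1}=(1-\beta)(x^{t+1}-z^t)$ and $\|x^{t+1}-z^t\|^2=\|z^{t+1}-z^t\|^2/\beta^2$. A short computation yields $\tfrac{p(2-\beta)}{2\beta}\|z^{t+1}-z^t\|^2 \ge \tfrac{p}{2\beta}\|z^{t+1}-z^t\|^2$ because $\beta\le 1$. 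Adding the three legs produces the lemma. The only mildly delicate point is the primal leg, where one must be careful to invoke the projection inequality with the correct test point $x^t$ so that it pairs cleanly with the descent lemma; the rest is routine algebra.
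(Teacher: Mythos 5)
Your proposal is correct and follows essentially the same three-leg telescoping decomposition as the paper's proof (dual update, gradient projection step, $z$-averaging step), with identical estimates for each leg. The only difference is that you write out the standard descent-lemma-plus-projection argument for the primal leg, which the paper simply cites as the standard gradient projection inequality.
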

%[[Provide full details of the proof.]]
%%
\begin{proof}
First, we have the trivial equality:
\begin{equation}\nonumber
K(x^t, z^t; y^t)-K(x^t, z^t; y^{t+1})
=-\alpha\|Ax^t-b\|^2.
\end{equation}
Next, notice that updating $x$ is a standard gradient projection, hence,
\begin{equation}\label{eq:descent}
K(x^t, z^t; y^{t+1})-K(x^{t+1}, z^t; y^{t+1})
\ge \frac{1}{2c}\|x^t-x^{t+1}\|^2.
\end{equation}
Moreover, recall that in Algorithm~\ref{Alg2}, $z^{t+1}=z^t+\beta(x^{t+1}-z^t)$, we have
\begin{eqnarray}
% \nonumber % Remove numbering (before each equation)
K(x^{t+1}, z^{t}; y^{t+1})-K(x^{t+1}, z^{t+1}; y^{t+1}) &=& \frac{p}{2}(\|x^{t+1}-z^t\|^2-\|x^{t+1}-z^{t+1}\|^2) \nonumber\\
&=& \frac{p}{2}(z^{t+1}-z^{t})^T((x^{t+1}-z^t)+(x^{t+1}-z^{t+1})) \nonumber\\
&=& \frac{p}{2}(2/\beta-1)\|z^t-z^{t+1}\|^2 \nonumber\\
&\ge&\frac{p}{2\beta}\|z^t-z^{t+1}\|^2,\label{zprimal}
\end{eqnarray}
for $\beta\le 1$.
Combining the above three inequalities yields the desired result.
\end{proof}
\begin{lemma}[Dual Ascent]\label{dual ascent}
For any $t$, we have
\begin{eqnarray*}
d(y^{t+1}, z^{t+1})-d(y^t, z^t)
\ge && \alpha(Ax^t-b)^T(Ax(y^{t+1}, z^t)-b)\\
&&+\frac{p}{2}(z^{t+1}-z^t)^T(z^{t+1}+z^t-2x(y^{t+1}, z^{t+1})).
\end{eqnarray*}
\end{lemma}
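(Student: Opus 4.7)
The plan is to telescope the difference $d(y^{t+1},z^{t+1})-d(y^t,z^t)$ through the intermediate point $(y^{t+1},z^t)$, bounding each piece by the envelope/optimality property of $d$. Since $K(x,z;y)$ is strongly convex in $x$ (because $p>-\gamma$), the minimizer $x(y,z)$ in \eqref{K} is unique, and for any $(y,z)$ and any $x'\in P$ we have $K(x',z;y)\ge K(x(y,z),z;y)=d(y,z)$. This one-sided inequality is the only property we need.

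First I would handle the dual update at fixed $z^t$. Using the identity $d(y,z)=K(x(y,z),z;y)$ and the fact that $K$ is affine in $y$ with $\partial_y K(x,z;y)=Ax-b$, I would write
\begin{align*}
d(y^{t+1},z^t)-d(y^t,z^t)
&= K\bigl(x(y^{t+1},z^t),z^t;y^{t+1}\bigr)-K\bigl(x(y^t,z^t),z^t;y^t\bigr)\\
&\ge K\bigl(x(y^{t+1},z^t),z^t;y^{t+1}\bigr)-K\bigl(x(y^{t+1},z^t),z^t;y^t\bigr)\\
&= (y^{t+1}-y^t)^T\bigl(Ax(y^{t+1},z^t)-b\bigr),
\end{align*}
where the inequality uses that $x(y^t,z^t)$ is the minimizer at $(y^t,z^t)$. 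Substituting $y^{t+1}-y^t=\alpha(Ax^t-b)$ from Algorithm~\ref{Alg2} produces precisely the first term on the right-hand side of the lemma.

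Next I would handle the proximal update at fixed $y^{t+1}$, by the symmetric trick of evaluating both $K$'s at the later minimizer $x(y^{t+1},z^{t+1})$:
\begin{align*}
d(y^{t+1},z^{t+1})-d(y^{t+1},z^t)
&= K\bigl(x(y^{t+1},z^{t+1}),z^{t+1};y^{t+1}\bigr)-K\bigl(x(y^{t+1},z^t),z^t;y^{t+1}\bigr)\\
&\ge K\bigl(x(y^{t+1},z^{t+1}),z^{t+1};y^{t+1}\bigr)-K\bigl(x(y^{t+1},z^{t+1}),z^t;y^{t+1}\bigr)\\
&= \tfrac{p}{2}\bigl(\|x(y^{t+1},z^{t+1})-z^{t+1}\|^2-\|x(y^{t+1},z^{t+1})-z^t\|^2\bigr).
\end{align*}
Expanding the difference of squares via $\|u-a\|^2-\|u-b\|^2=(b-a)^T(2u-a-b)$ with $u=x(y^{t+1},z^{t+1})$, $a=z^{t+1}$, $b=z^t$ yields exactly $\tfrac{p}{2}(z^{t+1}-z^t)^T(z^{t+1}+z^t-2x(y^{t+1},z^{t+1}))$, the second term in the lemma. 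Adding the two bounds gives the claim.

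There is no serious obstacle; the only subtle point is choosing the \emph{right} auxiliary minimizer in each step so that the inequality $K(\cdot)\ge d$ applied at that single point gives away nothing. For the $y$-difference we must use $x(y^{t+1},z^t)$ (so the discarded term $K(x(y^{t+1},z^t),z^t;y^t)-K(x(y^t,z^t),z^t;y^t)\ge 0$ vanishes favorably), whereas for the $z$-difference we must use $x(y^{t+1},z^{t+1})$ so that the explicit $p$-weighted quadratic expression contains $x(y^{t+1},z^{t+1})$ rather than $x(y^{t+1},z^t)$, matching the statement. Everything else is algebra and the affine/quadratic structure of $K$ in $(y,z)$.
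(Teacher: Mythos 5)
Your proposal is correct and follows essentially the same route as the paper: split the difference through the intermediate point $(y^{t+1},z^t)$, apply the inequality $K(x',z;y)\ge d(y,z)$ at the minimizers $x(y^{t+1},z^t)$ and $x(y^{t+1},z^{t+1})$ respectively, and use the affine dependence on $y$ and the quadratic dependence on $z$. Both the choice of auxiliary minimizers and the final algebra match the paper's argument exactly.
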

\begin{proof}
First, recall that
$$K(x, z;y)=f(x)+\langle y, Ax-b\rangle+\frac{\Gamma}{2}\|Ax-b\|^2+\frac{p}{2}\|x-z\|^2$$
so we have
\begin{eqnarray*}
d(y^{t+1}, z^t)-d(y^{t}, z^t)
&=& K(x(y^{t+1}, z^t), z^t; y^{t+1})-K(x(y^{t}, z^t), z^t; y^t)\nonumber\\
&\ge &K(x(y^{t+1}, z^t), z^t; y^{t+1})-K(x(y^{t+1}, z^t), z^t; y^t)\nonumber\\
&=& \langle y^{t+1}-y^t,Ax(y^{t+1}, z^t)-b\rangle,\nonumber\\
&=& \alpha(Ax^t-b)^T(Ax(y^{t+1}, z^t)-b), \nonumber
\end{eqnarray*}
where the inequality is because $x(y^t, z^t)=\arg\min_xK(x, z^t; y^t)$.
Next, using the same technique, we have
\begin{eqnarray}
d(y^{t+1},z^{t+1})-d(y^{t+1}, z^{t})&=& K(x(y^{t+1}, z^{t+1}), z^{t+1}; y^{t+1})-K(x(y^{t+1}, z^t), z^t; y^{t+1})\nonumber\\
&\ge& K(x(y^{t+1}, z^{t+1}), z^{t+1}; y^{t+1})-K(x(y^{t+1}, z^{t+1}), z^t; y^{t+1})\nonumber\\
&=& \frac{p}{2}(\|x(y^{t+1}, z^{t+1}-z^{t+1}\|^2-\|x(y^{t+1}, z^{t+1})-z^t\|^2)\nonumber\\
&=&\frac{p}{2}(z^{t+1}-z^t)^T(z^{t+1}+z^t-2x(y^{t+1}, z^{t+1})). \label{zdual}
%\nonumber
\end{eqnarray}
Combining these, we get the desired result.
\end{proof}
\begin{lemma}[Proximal Descent]\label{proximal-descent}
For any $t\ge 0$, there holds
\begin{equation}\label{eq:prox-descent}
M(z^{t+1})-M(z^{t})\le p(z^{t+1}-z^t)^T(z^t-x^*(z^t))+\frac{p\tilde{L}}{2}\|z^t-z^{t+1}\|^2,
\end{equation}
where $\tilde{L}=\frac{p}{p+\gamma}+1$.
\end{lemma}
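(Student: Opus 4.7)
The plan is to recognize the inequality as the standard descent inequality for a Lipschitz-smooth function applied to the Moreau-like envelope $M(z)$, so the work reduces to (i) computing $\nabla M(z)$ and (ii) bounding its Lipschitz modulus by $p\tilde{L}$.

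First I would establish differentiability of $M$. Since $p > -\gamma$, by \eqref{gamma} the inner objective $f(x)+\frac{p}{2}\|x-z\|^2$ is strongly convex in $x$ with modulus $p+\gamma>0$, so the minimizer $x^*(z)$ in \eqref{x} is unique. A Danskin/envelope argument (using the smoothness of $(x,z)\mapsto f(x)+\frac{p}{2}\|x-z\|^2$ in $z$ and uniqueness of the inner minimizer over the fixed feasible set $\{x\in P,\,Ax=b\}$) then yields
\[
\nabla M(z) \;=\; p\,(z - x^*(z)).
\]

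The main work is to show $x^*(\cdot)$ is Lipschitz with constant $\frac{p}{p+\gamma}$. For $z,z'$ fixed, set $x=x^*(z),x'=x^*(z')$. Writing $g_z(x):=f(x)+\frac{p}{2}\|x-z\|^2$, strong convexity with modulus $p+\gamma$ and the variational inequality $\langle\nabla g_z(x),x'-x\rangle\ge 0$ (valid because $x'$ is feasible and $x$ is optimal) give
\[
g_z(x')-g_z(x)\;\ge\;\tfrac{p+\gamma}{2}\|x'-x\|^2,
\]
and symmetrically for $g_{z'}$. Adding the two inequalities, the $f$-terms cancel and the quadratic terms telescope to $p(z'-z)^{\top}(x'-x)$ on the left, yielding
\[
p(z'-z)^{\top}(x'-x) \;\ge\; (p+\gamma)\|x'-x\|^{2},
\]
and Cauchy–Schwarz then gives $\|x^*(z')-x^*(z)\|\le\tfrac{p}{p+\gamma}\|z'-z\|$. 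Consequently,
\[
\|\nabla M(z')-\nabla M(z)\|\;\le\;p\|z'-z\|+p\|x^*(z')-x^*(z)\|\;\le\;p\Bigl(1+\tfrac{p}{p+\gamma}\Bigr)\|z'-z\|\;=\;p\tilde{L}\,\|z'-z\|.
\]

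With $\nabla M$ Lipschitz of modulus $p\tilde{L}$, the standard descent lemma applied at $z^t,z^{t+1}$ gives
\[
M(z^{t+1})\;\le\;M(z^{t})+\langle\nabla M(z^{t}),z^{t+1}-z^{t}\rangle+\tfrac{p\tilde{L}}{2}\|z^{t+1}-z^{t}\|^{2},
\]
and substituting $\nabla M(z^{t})=p(z^{t}-x^*(z^{t}))$ yields exactly \eqref{eq:prox-descent}. The only delicate point is justifying the envelope formula for $\nabla M$: one must verify the inner problem is over a compact, nonempty, $z$-independent feasible set (guaranteed by Assumption~\ref{feasible-int}(a) together with the boundedness of $P$) with a unique minimizer (by strong convexity), so Danskin's theorem applies cleanly; everything else is routine.
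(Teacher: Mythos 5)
Your proposal is correct and follows essentially the same route as the paper: Danskin's theorem gives $\nabla M(z)=p(z-x^*(z))$, the Lipschitz bound $\|x^*(z)-x^*(z')\|\le\frac{p}{p+\gamma}\|z-z'\|$ makes $\nabla M$ Lipschitz with modulus $p\tilde{L}$, and the standard descent lemma finishes. The only cosmetic difference is that you re-derive the Lipschitz continuity of $x^*(\cdot)$ inline, whereas the paper invokes it as Lemma~\ref{continuity} (equivalently the error bound \eqref{eb4}).
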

The proof of Lemma~\ref{proximal-descent} will be given later since it needs the error bound in Lemma~\ref{error bound}.

The three terms $K(x^t, z^t; y^t)$, $-d(y^t, z^t)$ and $M(z^t)$ individually do not need to decrease after each iteration; however, some weighted sum of them does! This is the main idea of the proof of Theorem~\ref{th:main}.
To establish the convergence of Algorithm~\ref{Alg2}, we construct a potential function which decreases sufficiently after each iteration. This potential function is a linear combination of the primal, dual and proximal terms .
Specifically, we will prove that the potential function
$$\phi^t=\phi(x^t, z^t;y^t)=K(x^t, z^t; y^t)-2d(y^t, z^t)+2M(z^t)$$
decreases sufficiently after each iteration for sufficiently small $\alpha$ and $\beta$, where the functions $K$, $d$ and $P$ are defined in \eqref{eq:K}-\eqref{proximal}.
Let $\underline{f}=\min_{x\in P, Ax=b}f(x)$. Then $M(z)\ge \underline{f}$ for any $z\in P$.
Note that $x^t\in P$ and $z^t\in P$ for all $t$ (see the definition of the Algorithm \ref{Alg2}), so $M(z^t)$ is bounded from below. Moreover, it follows from \eqref{bounds} that
\begin{eqnarray}\label{phi}
\phi^t=(K(x^t, z^t; y^t)-d(y^t, z^t))+(M(z^t)-d(y^t, z^t))+M(z^t)\ge M(z^t)\ge \underline{f}
\end{eqnarray}
is also bounded below.
When considering $\phi^t-\phi^{t+1}$, we have some negative terms, which need to be bounded. To this aim, we need to some so-called ""error bounds'', which bound the iteration sequences by the residuals.
\subsubsection{Error Bounds}
In this subsection, we prove some error bounds, which bound the distance of the iterative points to the solution set and bound the difference of $x(\cdot, \cdot), x^*(\cdot)$ when $y$ and $z$ are perturbed.
The following lemma implies that if the dual residual $Ax-b\to 0$, the dual variable $y$ is bounded.
\begin{lemma}\label{ybounded}
Suppose Assumption \ref{feasible-int}$\;($a$)$ holds. If $\{y^i\}\subseteq \mathbb{R}^m$ be a sequence such that $y^i\in \tilde{y}^0+\mathrm{range}(A)$, where $\tilde{y}^0$ is some fixed vector . If $\|Ax(y^i, z^i)-b\|\rightarrow 0$ for some sequence $\{z^i\}$,
then $\{y^i\}$ is bounded.
\end{lemma}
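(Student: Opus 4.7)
The plan is to argue by contradiction: assume $\|y^i\|\to\infty$ along a subsequence. The engine is the first-order optimality condition for $x^i := x(y^i, z^i)$, divided by $\|y^i\|$ and passed to the limit, which will force the limiting normalized dual direction to violate Assumption~\ref{feasible-int}(a). First, decompose $y^i = \tilde y^0 + r^i$ with $r^i\in\range(A)$. Since $\tilde y^0/\|y^i\|\to 0$ and $r^i/\|y^i\|$ lies in the closed subspace $\range(A)$, passing to a subsequence yields $y^i/\|y^i\|\to d$ with $\|d\|=1$ and $d\in\range(A)$.

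Second, since $x^i$ minimizes the (strongly convex, because $p>-\gamma$) function $K(\cdot, z^i;y^i)$ over the box $P$, the first-order optimality condition gives
\[
\langle \nabla f(x^i) + A^T y^i + \Gamma A^T(Ax^i - b) + p(x^i - z^i),\; x - x^i\rangle \ge 0,\qquad\forall\, x\in P.
\]
Divide by $\|y^i\|$ and pass to the limit on a further subsequence. Because $x^i\in P$ is bounded, $\nabla f$ is Lipschitz on $P$ and hence bounded there, $Ax^i - b\to 0$ by hypothesis, and (in the algorithmic context) $z^i\in P$ so $\|x^i - z^i\|$ is bounded, every summand except $A^T y^i/\|y^i\|$ vanishes after the division. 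Extracting a subsequence along which $x^i\to x^\infty\in P$ (necessarily $Ax^\infty = b$), the limiting inequality reads
\[
\langle A^T d,\; x - x^\infty\rangle\ge 0,\qquad\forall\, x\in P,
\]
equivalently $d^T v\ge 0$ for every $v\in AP - b$.

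Third, invoke the relative-interior assumption. Since $b = Ax^\infty\in\range(A)$, the affine hull of $AP - b$ equals $\range(A)$, and by Assumption~\ref{feasible-int}(a) there is $\epsilon>0$ such that every $v\in\range(A)$ with $\|v\|\le\epsilon$ lies in $AP - b$. Applying this to $v = -\epsilon d$ (which is in $\range(A)$ by the first step) produces $0\le d^T(-\epsilon d) = -\epsilon$, the desired contradiction. The main obstacle is the limit argument in the middle step: one must justify the vanishing of the proximal gradient term $p(x^i - z^i)/\|y^i\|$, which requires $\{z^i\}$ to be bounded; this is automatic in the algorithm because $z^t$ is a convex combination of iterates lying in $P$, but it should be flagged explicitly given the stated generality of the lemma.
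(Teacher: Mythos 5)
Your proof is correct, but it takes a genuinely different route from the paper's. The paper argues by an energy comparison: assuming $\|y^i\|\to\infty$, it uses Assumption~\ref{feasible-int}(a) to construct a test point $x^i\in P$ whose residual $Ax^i-b$ has fixed norm $r$ and points opposite to the $\mathrm{range}(A)$-component of $y^i$, and then shows that for large $i$ the value $K(x^i,z^i;y^i)$ drops strictly below $K(x(y^i,z^i),z^i;y^i)$ --- the linear dual term $-r\|\tilde y^i\|$ overwhelms all the terms that are uniformly bounded over $x,z\in P$, while the corresponding dual term at the minimizer is controlled because $\|Ax(y^i,z^i)-b\|\to 0$. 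This contradicts the minimality of $x(y^i,z^i)$ using only function values, so it would survive without differentiability of $f$. You instead normalize the first-order variational inequality at the minimizer by $\|y^i\|$ and pass to the limit, obtaining $\langle d, v\rangle\ge 0$ for all $v\in AP-b$ with $\|d\|=1$, $d\in\mathrm{range}(A)$, and then contradict the relative-interior assumption by testing with $v=-\epsilon d$; this is the more standard ``Slater implies bounded multipliers'' argument and is arguably cleaner, at the price of invoking smoothness of $f$ and a compactness/limit extraction. Both arguments use Assumption~\ref{feasible-int}(a) in the same essential way (to produce a feasible residual direction opposing the exploding dual), and both silently need $\{z^i\}$ bounded --- the paper's $M=\max_{x,z\in P}\{\cdots\}$ presupposes $z^i\in P$ exactly as your proximal-term estimate does --- so your explicit flag of that hypothesis is a fair observation about the lemma's stated generality rather than a defect of your argument.
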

\begin{proof}
According to Assumption \ref{feasible-int}(a), there exists a positive $r>0$, such that for any direction $d\in Range(A)$, we can find an $x\in P$ satisfying $\|Ax-b\|=r$ and $Ax-b$ has the same direction as $d$.
We claim that if $\|y^i\|$ goes to infinity, then $\|Ax(y^i, z^i)-b\|$ must be bounded away from $0$, i.e., we can not have $\|Ax(y^i, z^i)-b\|\rightarrow 0$.
We prove this by contradiction. Assume that $\|y^i\|\rightarrow \infty$ and $\|Ax(y^i, z^i)-b\|\rightarrow 0$ for some sequences $\{y^i\}$ and $\{z^i\}$. Since $\tilde{y}^0$ is fixed, it follows that $\|\tilde{y}^i\|\to \infty$. By Assumption \ref{feasible-int}(a), there exists a $x^i\in P$ such that $Ax^i-b$ is of the same direction as $-\tilde{y}^i$ and $\|Ax^i-b\|=r$.
Let
$$M=\max_{x, z\in P}\left\{|f(x)|+\frac{p}{2}\|x-z\|^2+\frac{\Gamma}{2}\|Ax-b\|^2+\langle \tilde{y}^0, Ax-b\rangle\right\}.$$
When $i$ is sufficiently large, we have $\|\tilde{y}^i\|>4M/r$ and $\|Ax(y^i,z^i)-b\|<r/2$. Therefore, it follows that
\begin{equation}\label{eq2-1}
(\tilde{y}^i)^T(Ax^i-b)=-\|\tilde{y}^i\|\cdot \|Ax^i-b\|=-\|\tilde{y}^i\|r<-4M,
\end{equation}
\begin{eqnarray*}
(\tilde{y}^i)^T(Ax^i-b)&=& -r\|\tilde{y}^i\|\\
&\le& -r \frac{|(\tilde{y}^i)^T(Ax(y^i, z^i)-b)|}{\|Ax(y^i, z^i)-b\|}\\
&\le & -r\frac{|(\tilde{y}^i)^T(Ax(y^i, z^i)-b)|}{r/2}\\
&\le&2(\tilde{y}^i)^T(Ax(y^i, z^i)-b),
\end{eqnarray*}
where the second step follows from Cauchy-Schwartz inequality.
Hence, we have
\begin{eqnarray*}
K(x^i, z^i; y^i)-K(x(y^i, z^i), z^i; y^i)&\le& 2M+(\tilde{y}^i)^T(Ax^i-b)-(\tilde{y}^i)^T(Ax(y^i, z^i)-b)\\
&\le& 2M+(\tilde{y}^i)^T(Ax^i-b)-\frac{1}{2}(\tilde{y}^i)^T(Ax^i-b)\\
&=&2M+\frac{1}{2}(\tilde{y}^i)^T(Ax^i-b)<0,
%&<&0
\end{eqnarray*}
where the last step is due to \eqref{eq2-1}.
This further implies
$$K(x^i, z^i; y^i)<K(x(y^i, z^i), z^i; y^i)$$
which contradicts the definition of $x(y^i, z^i)$.
Hence, when $\|Ax(y^i, z^i)-b\|\rightarrow 0$, we have that $\|y^i\|$ is bounded.
\end{proof}
\noindent{\bf Remark} Note that each iterate $y^t$ of Algorithm \ref{Alg2} satisfies the condition of the above lemma with $\tilde{y}^0=y^0$.

%It can be viewed as a weak error bound.
The boundedness of $y^t$ will be used to establish the dual error bound later in Lemma~\ref{error bound}.
Next result shows that $x^*(z)$ is continuous in $z$ and $x(y, z)$ is Lipschitz-continuous in $(y, z)$.
\begin{lemma}\label{continuity}
Suppose $p>-\gamma$ and $z, z'\in P$, then we have
\begin{eqnarray*}
&&\|x^*(z)-x^*(z')\|\le\frac{p}{p+\gamma}\|z-z'\|;\\
&&\|x(y, z)-x(y, z')\|\le \frac{p}{p+\gamma}\|z-z'\|;\\
&&\|x(y, z)-x(y', z)\|\le \frac{p+\gamma}{\sigma}\|y-y'\|.
\end{eqnarray*}
\end{lemma}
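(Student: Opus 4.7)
The plan is to prove all three bounds by a single template: write the first-order (variational inequality) optimality condition for the minimizer of a strongly convex function over a convex feasible set, evaluate this condition at two perturbed optima, cross-test, and exploit strong monotonicity. Under Assumption~\ref{feasible-int}(c) together with \eqref{gamma}, the function $f(x)+\tfrac{p}{2}\|x-z\|^2$ is $(p+\gamma)$-strongly convex on $P$ whenever $p>-\gamma$; the same holds for $K(\cdot,z;y)$, because the extra terms $y^T(Ax-b)+\tfrac{\Gamma}{2}\|Ax-b\|^2$ are convex (indeed monotone) in $x$ and will only reinforce the monotonicity inequalities below.

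For the first bound, I would write the first-order optimality condition for $x^*(z)$ over the feasible set $\{x\in P:Ax=b\}$, namely
\[\langle \nabla f(x^*(z))+p(x^*(z)-z),\, x-x^*(z)\rangle\ge 0\]
for every feasible $x$, and likewise at $x^*(z')$. Since the feasible set does not depend on $z$, substituting $x=x^*(z')$ in the first VI and $x=x^*(z)$ in the second is legitimate. Adding the two and using \eqref{gamma} to bound $\langle \nabla f(x^*(z))-\nabla f(x^*(z')),x^*(z)-x^*(z')\rangle\ge \gamma\|x^*(z)-x^*(z')\|^2$, I expect to obtain
\[(p+\gamma)\|x^*(z)-x^*(z')\|^2\le p\,\langle z-z',\, x^*(z)-x^*(z')\rangle,\]
after which Cauchy--Schwarz delivers the stated inequality.

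The second bound is proved by the identical argument applied to $K(\cdot,z;y)$ on $P$. Only the proximal centre $z$ differs between the two VIs, so the linear term $y^T(Ax-b)$ cancels and the quadratic term $\tfrac{\Gamma}{2}\|Ax-b\|^2$ contributes the nonnegative quantity $\Gamma\|A(x(y,z)-x(y,z'))\|^2$ on the favourable side of the inequality. The same algebra then gives $\|x(y,z)-x(y,z')\|\le \tfrac{p}{p+\gamma}\|z-z'\|$.

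For the third bound I would perturb $y$ instead of $z$, so that only the linear term $y^T(Ax-b)$ changes between the two VIs. Cross-testing and consolidating the strongly monotone terms as before produces
\[(p+\gamma)\|x(y,z)-x(y',z)\|^2\le \langle y-y',\, A(x(y,z)-x(y',z))\rangle\le \sigma\,\|y-y'\|\cdot\|x(y,z)-x(y',z)\|,\]
with $\sigma$ the operator norm of $A$ appearing in the definition of $L_K$, from which the claimed Lipschitz estimate follows after dividing through. The only point that requires care across all three parts is verifying that the cross-testing substitutions are admissible, but this is immediate: the feasible set is the same in each pair of VIs, since neither $P$ nor the equality constraint $Ax=b$ depends on $y$ or $z$. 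No substantive obstacle is anticipated; the lemma is a clean consequence of strong monotonicity of the gradient mapping associated with a strongly convex constrained optimisation problem.
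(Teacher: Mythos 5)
Your proof is correct, but it travels a slightly different road than the paper's. The paper works at the level of \emph{function values}: it bounds the difference $f(x^*(z);z')-f(x^*(z');z')$ from above and below using strong convexity of $f(\cdot\,;z)=f(\cdot)+\tfrac{p}{2}\|\cdot-z\|^2$, and only writes out the first inequality, asserting the other two are ``similar.'' You instead work at the level of \emph{first-order conditions}: cross-testing the variational inequalities at the two minimizers and invoking strong monotonicity of the gradient map (via \eqref{gamma}). The two arguments are close cousins, but yours has two concrete advantages: it handles all three bounds with one visibly uniform template (making explicit which term changes in each perturbation and why the extra terms $y^T(Ax-b)$ and $\tfrac{\Gamma}{2}\|Ax-b\|^2$ only help), and for the third bound it exposes the constant cleanly. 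On that point, note that your displayed chain yields $\|x(y,z)-x(y',z)\|\le \tfrac{\sigma}{p+\gamma}\|y-y'\|$ after dividing through --- \emph{not} the $\tfrac{p+\gamma}{\sigma}$ written in the lemma statement. The constant in the statement is evidently a typo (the reciprocal), since the error bound \eqref{eb6} with $\sigma_3=(\gamma+p)/\sigma$ is exactly $\|x(y,z)-x(y',z)\|\le \tfrac{\sigma}{\gamma+p}\|y-y'\|$, which is what your derivation proves; just be aware that ``the claimed estimate follows after dividing through'' is literally true only for the corrected constant. One last nitpick: in that same display the middle term should be $-\langle y-y',A(x(y,z)-x(y',z))\rangle$ (or its absolute value) before applying Cauchy--Schwarz; this does not affect the conclusion.
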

\begin{proof}
We only prove the Lipschitz continuity of $x^*(\cdot)$, the other claims can be proved similarly. Let $f(x;z)=f(x)+\frac{p}{2}\|x-z\|^2$.
\begin{eqnarray*}
&&f(x^*(z); z')-f(x^*(z'); z'))\\
&=&(f(x^*(z); z)-f(x^*(z'); z))-(f(x^*(z'); z')-f(x^*(z'); z))+(f(x^*(z); z')-f(x^*(z); z))\\
&=&(f(x^*(z); z)-f(x^*(z'); z))-\frac{p}{2}(-2(z'-z)^Tx^*(z')+\|z'\|^2-\|z\|^2)\\
&&+\frac{p}{2}(-2(z'-z)^Tx^*(z)+\|z'\|^2-\|z\|^2)\\
&=&(f(x^*(z); z)-f(x^*(z'); z))+{p}(z'-z)^T(x^*(z')-x^*(z))\\
&\le &-\frac{p+\gamma}{2}\|x^*(z)-x^*(z')\|^2+{p}(z'-z)^T(x^*(z')-x^*(z)),
\end{eqnarray*}
where the last inequality is due to the strong convexity of $f(x; z)$ in variable $x$.
On the other hand, again by the strong convexity, we have
$$f(x^*(z); z')-f(x^*(z'); z')\ge \frac{p+\gamma}{2}\|x^*(z)-x^*(z')\|^2.$$
Hence, we have
$$-(p+\gamma)\|x^*(z)-x^*(z')\|^2+{p}(z'-z)^T(x^*(z')-x^*(z))\ge 0,$$
which by Cauchy-Schwartz inequality further implies
$$\|x^*(z)-x^*(z')\|\le\frac{p}{p+\gamma}\|z-z'\|.$$
Hence, $x^*(\cdot)$ is Lipschitz continuous with modulus $p/(p+\gamma)$.\endprf% (in fact Lipschitz continuous).
\end{proof}

We denote $x^+, y^+, z^+$ be the updated variables of $x, y, z$ by Algorithm \ref{Alg2}, namely,
\begin{eqnarray}\label{plus}
y^+&=&y+\alpha(Ax-b);\\
x^+&=&[x-c\nabla K(x, z; y^+)]_+;\\ %#
z^+&=&z+\beta(x^+-z).
\end{eqnarray}
The following simple lemma says that if the algorithm stops, it finds a pair of primal-dual solution.
\begin{lemma}\label{limitpoint}
If
$$(x, y, z)=(x^+, y^+, z^+),$$
Then $(x, y)\in W$ is a pair of solution.
\end{lemma}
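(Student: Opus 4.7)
} The plan is to read off each of the three KKT conditions (primal feasibility, primal stationarity, and the box/complementarity part) from the three fixed-point equations one by one, and then conclude that $(x,y)\in W$.

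First I would use the dual fixed-point equation $y=y^+=y+\alpha(Ax-b)$. Since $\alpha>0$, this forces $Ax=b$, giving primal feasibility of the linear equality constraint. Next, from the smoothing fixed-point equation $z=z^+=z+\beta(x^+-z)=z+\beta(x-z)$ together with $\beta>0$, I would deduce $x=z$. Note also that $x\in P$ automatically from the update rule (the projection lands in $P$), so the box constraint is satisfied.

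The main step is the primal-variable fixed point $x=[x-c\nabla_x K(x,z;y^+)]_+$. Using the explicit expression
\[
\nabla_x K(x,z;y)=\nabla f(x)+A^T y+\Gamma A^T(Ax-b)+p(x-z),
\]
together with $Ax=b$, $x=z$, and $y^+=y$ (which follows from $Ax=b$), the gradient collapses to $\nabla_x K(x,z;y^+)=\nabla f(x)+A^T y$. So the fixed-point relation becomes
\[
x=\bigl[x-c(\nabla f(x)+A^T y)\bigr]_+,
\]
which is exactly the first-order optimality condition for minimizing a smooth function with gradient $\nabla f(x)+A^T y$ over the box $P$. Writing out this projection coordinate-wise (using the definition of $[\cdot]_+$), I would introduce multipliers $\mu_i^*\ge 0$ associated with the active lower bounds $x_i=\ell_i$ and $\nu_i^*\ge 0$ associated with the active upper bounds $x_i=u_i$, and verify that the resulting stationarity equation $\nabla f(x)+A^T y-\mu^*+\nu^*=0$ holds together with the complementarity conditions $\mu_i^*(\ell_i-x_i)=0$ and $\nu_i^*(x_i-u_i)=0$.

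Combining primal feasibility ($Ax=b$, $x\in P$) with the KKT stationarity and the complementarity conditions that we just obtained shows $(x,y)$ satisfies the KKT system of~\eqref{P}, hence $(x,y)\in W$. I do not anticipate a real obstacle here: the only slightly technical part is unpacking the projection fixed-point equation into the explicit complementarity multipliers $(\mu^*,\nu^*)$, but this is a routine consequence of the coordinate-wise definition of $[\cdot]_+$ given in Section~2.1.
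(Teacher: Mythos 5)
Your proposal is correct and is exactly the verification the paper has in mind: the paper's own proof simply states that one checks the KKT conditions and omits the details as trivial, and your write-up carries out precisely that check (deriving $Ax=b$, $x=z$, and then unpacking the projection fixed point into stationarity plus complementarity).
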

\begin{proof}
The proof is just to check the KKT conditions and is omitted since it is trivial.
\end{proof}
A couple of corollaries are in order, which are direct from the two lemmas above.
\begin{corollary}\label{coro1}
Suppose that Assumption~\ref{feasible-int}(a) holds. For any $\epsilon>0$, there exists a $\delta(\epsilon)>0$, such that for vectors $x,\; z\in P$ and any $y\in \tilde{y}^0+\mathrm{range}(A)$ with a fixed $\tilde{y}^0$ satisfying
$$\max\{\|x-x^+\|, \|z-x^+\|, \|Ax(y^+, z)-b\|\}<\delta(\epsilon),$$
we have
$$
\max\{\dist(x,X^*), \dist(z, X^*)\}<\epsilon.
$$
\end{corollary}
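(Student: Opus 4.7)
The plan is to argue by contradiction via compactness. If the conclusion fails, there exist $\epsilon_0 > 0$ and a sequence $(x^k, y^k, z^k)$ with $x^k, z^k \in P$ and $y^k \in \tilde{y}^0 + \mathrm{range}(A)$ such that the three residuals $\|x^k - (x^k)^+\|$, $\|z^k - (x^k)^+\|$ and $\|A\, x((y^k)^+, z^k) - b\|$ all tend to zero, while $\max\{\dist(x^k, X^*), \dist(z^k, X^*)\} \ge \epsilon_0$ for every $k$; here $(y^k)^+$ and $(x^k)^+$ denote the one-step updates defined in~\eqref{plus}. The goal is to show that, after passing to a subsequence, the limit $(\bar{x}, \bar{y}, \bar{z})$ is a fixed point of the Algorithm~\ref{Alg2} map so that Lemma~\ref{limitpoint} forces $\bar{x}\in X^*$, contradicting the lower bound.

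First I would extract a convergent subsequence. Since $P$ is bounded, $\{x^k\}$ and $\{z^k\}$ are bounded, hence so is $\{Ax^k - b\}$. Each $(y^k)^+ = y^k + \alpha(Ax^k - b)$ still lies in $\tilde{y}^0 + \mathrm{range}(A)$, and by hypothesis $\|A\, x((y^k)^+, z^k) - b\| \to 0$; Lemma~\ref{ybounded} therefore gives boundedness of $\{(y^k)^+\}$, and consequently of $\{y^k\}$. Passing to a subsequence, let $(x^k, y^k, z^k) \to (\bar{x}, \bar{y}, \bar{z})$, which also gives $(y^k)^+ \to \bar{y}^+ := \bar{y} + \alpha(A\bar{x} - b)$.

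Next I would identify the limit as a fixed point. Continuity of $\nabla_x K$ and of the projection $[\cdot]_+$ gives $(x^k)^+ \to \bar{x}^+ := [\bar{x} - c\nabla_x K(\bar{x}, \bar{z}; \bar{y}^+)]_+$, and the residual conditions $\|x^k - (x^k)^+\| \to 0$ and $\|z^k - (x^k)^+\| \to 0$ force $\bar{x} = \bar{x}^+$ and $\bar{z} = \bar{x}$ via the triangle inequality. Because $K(\cdot, \bar{z}; \bar{y}^+)$ is strongly convex in $x$ (using $p > -\gamma$) and $0 < c < 1/L_K$, the fixed-point equation $\bar{x} = \bar{x}^+$ identifies $\bar{x}$ uniquely as the minimizer $x(\bar{y}^+, \bar{z})$. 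The Lipschitz continuity from Lemma~\ref{continuity} then gives $x((y^k)^+, z^k) \to x(\bar{y}^+, \bar{z}) = \bar{x}$, so the third residual hypothesis forces $A\bar{x} = b$; this yields $\bar{y}^+ = \bar{y}$ and $\bar{z}^+ = \bar{z} + \beta(\bar{x}^+ - \bar{z}) = \bar{z}$. Thus $(\bar{x}, \bar{y}, \bar{z}) = (\bar{x}^+, \bar{y}^+, \bar{z}^+)$, and Lemma~\ref{limitpoint} gives $(\bar{x}, \bar{y}) \in W$ and in particular $\bar{x} \in X^*$. Combined with $\bar{z} = \bar{x}$, this gives $\dist(x^k, X^*), \dist(z^k, X^*) \to 0$, contradicting the lower bound $\epsilon_0$.

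The only delicate step is the boundedness of $\{y^k\}$: it crucially uses invariance of the affine subspace $\tilde{y}^0 + \mathrm{range}(A)$ under the dual update so that Lemma~\ref{ybounded} applies. Once compactness is in hand the remaining argument is continuity together with the fixed-point characterization from Lemma~\ref{limitpoint}, and no new quantitative estimate is required.
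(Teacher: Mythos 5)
Your proposal is correct and follows essentially the same route as the paper's proof in Appendix~\ref{Appendix:B}: a contradiction argument using compactness of $P$, boundedness of the dual sequence via Lemma~\ref{ybounded}, continuity of the update map and of $x(\cdot,\cdot)$ from Lemma~\ref{continuity}, and the fixed-point characterization of Lemma~\ref{limitpoint}. If anything, your handling of the dual boundedness (applying Lemma~\ref{ybounded} to $\{(y^k)^+\}$ and then recovering boundedness of $\{y^k\}$) is slightly more careful than the paper's, which applies the lemma to $\{y^i\}$ directly.
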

The proof of Corollary~\ref{coro1} is relegated to Appendix~\ref{Appendix:B}.
\begin{corollary}\label{coro2}
Suppose that Assumption~\ref{feasible-int}(a) holds. For any $\epsilon>0$, there exists a $\delta(\epsilon)>0$ such that if
$$\|Ax(y^+, z)-b\|<\delta(\epsilon)$$
for some $y\in \tilde{y}^0+\mathrm{range}(A)$ with some fixed $\tilde{y}^0$,
then
$$\|x(y^+, z)-x^*(z)\|< \epsilon,\mbox{ for any $z\in P$}.$$
\end{corollary}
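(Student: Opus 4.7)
The plan is to argue by contradiction, leveraging Lemma~\ref{ybounded} for dual boundedness, Lemma~\ref{continuity} for joint continuity of $x(\cdot,\cdot)$ and $x^*(\cdot)$, and a simple observation about what $K$ reduces to on the feasible slice. Suppose the conclusion fails. Then there exist $\epsilon_0 > 0$ and sequences $\{y_k\}\subseteq \tilde{y}^0+\mathrm{range}(A)$, $\{z_k\}\subseteq P$, and associated primal vectors $\{x_k\}\subseteq P$ for which the updates $y_k^+ := y_k + \alpha(Ax_k - b)$ satisfy $\|Ax(y_k^+, z_k)-b\|\to 0$ yet $\|x(y_k^+, z_k)-x^*(z_k)\|\ge \epsilon_0$ for every $k$. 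Since $y_k^+-y_k\in\mathrm{range}(A)$, the shifted sequence $\{y_k^+\}$ still lies in $\tilde{y}^0+\mathrm{range}(A)$, so Lemma~\ref{ybounded} applies and $\{y_k^+\}$ is bounded; the sequence $\{z_k\}$ is automatically bounded since $P$ is.

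After passing to a subsequence, I would let $y_k^+\to \bar{y}$ and $z_k\to \bar{z}\in P$. The Lipschitz estimates in Lemma~\ref{continuity} then give $x(y_k^+, z_k)\to x(\bar{y}, \bar{z})$ and $x^*(z_k)\to x^*(\bar{z})$. Passing $\|Ax(y_k^+, z_k)-b\|\to 0$ to the limit yields $Ax(\bar{y}, \bar{z}) = b$, so this limit point is feasible for the constrained problem defining $x^*(\bar{z})$.

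The heart of the argument, and the step I expect to require the most care, is to identify $x(\bar{y}, \bar{z})$ with $x^*(\bar{z})$. By definition, $x(\bar{y}, \bar{z})$ is the global minimizer of $K(\cdot,\bar{z};\bar{y})$ over $P$. On the feasible slice $\{x\in P: Ax=b\}$ the linear term $\langle \bar{y}, Ax-b\rangle$ and the quadratic penalty $\tfrac{\Gamma}{2}\|Ax-b\|^2$ both vanish, so there $K(x,\bar{z};\bar{y})$ reduces to $f(x) + \tfrac{p}{2}\|x-\bar{z}\|^2$. Because $x(\bar{y}, \bar{z})$ is itself feasible and already a global minimizer of $K$ on the larger set $P$, it must a fortiori minimize $f(x)+\tfrac{p}{2}\|x-\bar{z}\|^2$ over the feasible slice. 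The hypothesis $p>-\gamma$ makes this restricted objective strongly convex, so its minimizer is unique and equals $x^*(\bar{z})$, giving $x(\bar{y},\bar{z}) = x^*(\bar{z})$. Combining the three convergences produces $\|x(y_k^+, z_k) - x^*(z_k)\|\to \|x(\bar{y},\bar{z})-x^*(\bar{z})\| = 0$, which contradicts $\|x(y_k^+, z_k)-x^*(z_k)\|\ge \epsilon_0$ and thus yields the promised $\delta(\epsilon)$.
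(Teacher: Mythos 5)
Your proof is correct, and its skeleton — contradiction, boundedness of $\{y_k^+\}$ via Lemma~\ref{ybounded} (after noting $y^+-y=\alpha(Ax-b)\in\mathrm{range}(A)$), subsequential limits $\bar y,\bar z$, and Lemma~\ref{continuity} to pass $\|Ax(y_k^+,z_k)-b\|\to 0$ and the two primal maps to the limit — coincides with the paper's proof in Appendix~\ref{Appendix:C}. The one place you diverge is the final identification $x(\bar y,\bar z)=x^*(\bar z)$. The paper gets this by asserting that the KKT conditions of \eqref{optK} at $z^i$ pass to the limit and exhibit $\bar y$ as an optimal dual multiplier for the constrained proximal problem \eqref{proximal} at $\bar z$, whence $x(\bar y,\bar z)=x^*(\bar z)$; this is stated rather tersely and leans on multiplier convergence that is not spelled out. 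You instead give a purely variational argument: once $Ax(\bar y,\bar z)=b$, the function $K(\cdot,\bar z;\bar y)$ agrees with $f(\cdot)+\tfrac{p}{2}\|\cdot-\bar z\|^2$ on the feasible slice $\{x\in P:Ax=b\}$, so the unconstrained-over-$P$ minimizer, being feasible, also minimizes the strongly convex restricted objective, whose unique minimizer is $x^*(\bar z)$. This is a standard exact-penalty/weak-duality observation; it avoids any appeal to KKT systems or dual multipliers and is, if anything, cleaner and more self-contained than the paper's version. Both arguments use the same hypotheses ($p>-\gamma$ for uniqueness, Assumption~\ref{feasible-int}(a) for dual boundedness), so nothing is lost in generality.
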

The proof of Corollary~\ref{coro2} can be seen in Appendix~\ref{Appendix:C}.

Next we develop some primal and dual error bounds. To prove the dual error bound, we need to make use of the Hoffman bound, which can be seen in \cite{Pang-complementarity}.
\begin{proposition}\label{hoffman}
Let $A\in \mathbb{R}^{m\times n},\ C\in \mathbb{R}^{k\times n}$ and $b\in \mathbb{R}^{m} ,d\in \mathbb{R}^{k}$, then the distance from a point $\bar{x}\in \mathbb{R}^n$ to the set $S=\{x\mid Ax\le b, Cx=d\}$
is bounded by:
$$\dist(\bar{x}, S)^2\le\theta^2(\|(A\bar{x}-b)_+\|^2+\|C\bar{x}-d\|^2),$$
where $(\cdot)_+$ means the projection to the nonegative orthant and $\theta$ is a positive constant depending on $A$ and $C$ only.
\end{proposition}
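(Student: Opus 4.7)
The statement is the classical Hoffman error bound for polyhedra, and the plan is to give a standard proof based on the projection characterization, a Carathéodory-type reduction, and a linear-algebraic bound. If $\bar x \in S$ the inequality is trivial, so throughout we assume $\bar x \notin S$ and let $x^{*}=\Pi_{S}(\bar x)$ be the (unique) Euclidean projection of $\bar x$ onto the closed convex polyhedron $S$; set $v=\bar x - x^{*}$, so that $\dist(\bar x,S)=\|v\|$.

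The first step is to produce a structured representation of $v$. By the variational inequality for projection onto $S$, the vector $v$ lies in the normal cone $N_{S}(x^{*})$, which for a polyhedron has the explicit form $v=\sum_{i\in I_{0}}\lambda_{i}a_{i}^{\transp}+\sum_{j}\mu_{j}c_{j}^{\transp}$ with $\lambda_{i}\ge 0$ and $I_{0}=\{i:\, a_{i}x^{*}=b_{i}\}$. The second step applies Carathéodory's theorem (to the cone generated by the $a_{i}$'s together with the $\pm c_{j}$'s) to reduce to a linearly independent subset of these generators; collect the retained rows into a submatrix $B$ of $\binom{A}{C}$ with full row rank, giving $v=B^{\transp}\eta$ for some coefficient vector $\eta$.

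The third step bounds $\|Bv\|$ by the residual. For each retained $C$-row, $c_{j}v=c_{j}\bar x-d_{j}$, contributing at most $(c_{j}\bar x-d_{j})^{2}$. For each retained $A$-row, $a_{i}v=a_{i}\bar x-b_{i}$, and the Carathéodory reduction is arranged so that the retained $A$-rows correspond only to constraints with $a_{i}\bar x\ge b_{i}$, so $|a_{i}v|=(a_{i}\bar x-b_{i})_{+}$, yielding $\|Bv\|^{2}\le\|(A\bar x-b)_{+}\|^{2}+\|C\bar x-d\|^{2}$. Since $B$ has full row rank, $v$ lies in $\range(B^{\transp})$, and the identity $\|v\|^{2}=(Bv)^{\transp}(BB^{\transp})^{-1}(Bv)\le\|Bv\|^{2}/\sigma_{\min}(B)^{2}$ converts the residual bound into a bound on $\|v\|$. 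Because $B$ ranges over the \emph{finite} collection of full-row-rank submatrices of $\binom{A}{C}$, setting $\theta:=\max 1/\sigma_{\min}(B)$ over this collection yields a constant that depends only on $A$ and $C$, as required.

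The main obstacle is the sign-compatibility step in the Carathéodory reduction: the raw KKT multipliers at $x^{*}$ need not correspond only to constraints violated by $\bar x$, so a naive choice of active rows could produce a term $(a_{i}v)^{2}$ in $\|Bv\|^{2}$ with $a_{i}\bar x<b_{i}$, which is \emph{not} controlled by $(a_{i}\bar x-b_{i})_{+}=0$. Ensuring a sign-consistent selection requires a more careful argument: either a face-by-face enumeration, in which one first projects $\bar x$ onto the affine hull of each candidate face of $S$ and retains only those faces whose active set is contained in the violated index set $\{i:\, a_{i}\bar x>b_{i}\}$, or, following Hoffman's original paper, an LP-duality argument that reformulates $\dist_{1}(\bar x,S)$ as a linear program and uses basic-solution sensitivity together with norm equivalence to return to the $\ell_{2}$ setting. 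Either route produces finitely many cases, and the worst case yields the universal constant $\theta$.
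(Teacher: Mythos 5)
The paper offers no proof of this proposition to compare against: it is invoked as the classical Hoffman error bound with a citation to the literature, so your argument has to stand entirely on its own. As written it does not, and the gap is exactly the one you flag at the end --- it is fatal to the structure of your step three, not a technicality. The claim that the Carath\'eodory reduction ``is arranged so that the retained $A$-rows correspond only to constraints with $a_i\bar x\ge b_i$'' is false in general. Concretely, in $\mathbb{R}^2$ take $a_1=(1,0)$, $a_2=(-1/2,\sqrt3/2)$, $b=(0,0)$, no equality rows, and $\bar x=3a_1+a_2=(5/2,\sqrt3/2)$. The projection of $\bar x$ onto $S=\{x: a_1x\le 0,\ a_2x\le 0\}$ is the origin, both constraints are active there, and since $a_1,a_2$ are linearly independent the conic representation $v=\bar x=3a_1+a_2$ is unique, so row $2$ cannot be dropped; yet $a_2\bar x=-1/2<0$, so $(a_2\bar x-b_2)_+=0$ while $(a_2 v)^2=1/4$, and indeed $\|Bv\|^2=26/4>25/4=\|(A\bar x-b)_+\|^2$. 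So the inequality $\|Bv\|^2\le\|(A\bar x-b)_+\|^2+\|C\bar x-d\|^2$ fails, and your first proposed repair (retaining only faces whose active set lies inside the violated index set) rests on the same false premise, since the active set at the true projection need not be contained in $\{i: a_i\bar x>b_i\}$. The LP-duality route you mention would work, but it is only named, not carried out.

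The good news is that your own setup can be rescued with one change: instead of bounding $\|Bv\|$ by the residual, pair $v$ against its normal-cone representation. From $v=\sum_i\lambda_i a_i^T+C^T\mu$ with $\lambda\ge 0$ supported on rows active at $x^*$, you get $\|v\|^2=\sum_i\lambda_i(a_i\bar x-b_i)+\mu^T(C\bar x-d)\le\|\lambda\|\,\|(A\bar x-b)_+\|+\|\mu\|\,\|C\bar x-d\|$, where only $\lambda_i\ge 0$ is used to replace $a_i\bar x-b_i$ by its positive part (in the example above this reads $7\le 3\cdot\tfrac52+1\cdot 0$). Carath\'eodory, applied exactly as you intended, lets you choose $(\lambda,\mu)$ supported on a linearly independent row set $B$, whence $\|(\lambda,\mu)\|\le\|v\|/\sigma_{\min}(B)$; dividing by $\|v\|$ and maximizing over the finite family of full-row-rank submatrices yields the stated bound with $\theta=\max_B \sqrt2/\sigma_{\min}(B)$. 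With that substitution your outline becomes a complete and standard proof; without it, the proposal is incomplete.
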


\begin{lemma}[Error Bounds]\label{error bound}
Suppose $p>-\gamma$, $\Gamma>0$ are fixed. Then there exist positive constants $\sigma_1,...,\sigma_4>0$ $($independent of $y^{t+1}$ and $z^t$$)$ such that the following error bounds hold:
\begin{eqnarray}
\|x^{t+1}-x^t\|&\ge& \sigma_1\|x^t-x(y^{t+1}, z^t)\|,\label{eb1}\\
\|x^{t+1}-x^t\|&\ge& \sigma_2\|x^{t+1}-x(y^{t+1}, z^t)\|, \label{eb2} \\
\|y-y'\|&\ge&\sigma_3\|x(y,z)-x(y',z)\|, \label{eb6}\\
\|z^t-z^{t+1}\| &\ge &\sigma_4 \|x^*(z^t)-x^*(z^{t+1})\|,\label{eb4} \\
\|z^t-z^{t+1}\|&\ge & \sigma_4\|x(y^{t+1},z^t)-x(y^{t+1}, z^{t+1})\|,\label{eb3}
\end{eqnarray}
where $\sigma_1=c\gamma_K=c(p+\gamma)$, $\sigma_2={\sigma_1}/({1+\sigma_1})$, $\sigma_3=\gamma_K/\sigma=(\gamma+p)/\sigma$ and  ${\sigma}_4=\gamma_K/p=(\gamma+p)/p$.
%Moreover, consider the strongly convex proximal problem:
%\begin{equation}\label{prox-P}
%\begin{array}{ll}
%\mbox{minimize }& \displaystyle f(x)+\frac{p}{2}\|x-z\|^2\\[5pt]
%\mbox{subject to }& Ax=b\\
%&l_i\le x_i\le u_i, \ i=1, \cdots, n,
%\end{array}
%\end{equation}
Furthermore, suppose Assumption~\ref{feasible-int}$(${b}$)$ holds, then there exist positive scalars $\Delta,\; \sigma_5$ such that
\begin{eqnarray}
% \nonumber % Remove numbering (before each equation)
\dist(y, Y^*(z))&\le &\sigma_5\|Ax(y, z)-b\|, \label{eb5}
\end{eqnarray}
%# solution set
if $y\in y^0+\mathrm{range}(A)$, $\|Ax(y, z)-b\|\le \Delta$ and $\dist(z, X^*)\le \Delta$, where $Y^*(z)$ denotes the solution set of dual multipliers for \eqref{proximal}. %[[Do you really need the residuals to be small for the last two error bounds to hold? Should mention $\sigma_4$ and $\sigma_5$ are dependent on strong convexity modulus $(p+L)$ only. In particular, we have $\sigma_4=\sigma_5=1/(p+L)$. The proof is just to use the global error bound \eqref{geb}. ]]%[[What is $X^*$?]]
\end{lemma}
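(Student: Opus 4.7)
The plan is to treat the five inequalities \eqref{eb1}--\eqref{eb3} and the dual error bound \eqref{eb5} separately, since they are of different natures. The first two are contraction/residual bounds for the projected gradient step, bounds \eqref{eb6}--\eqref{eb3} are inverted Lipschitz estimates that drop out of Lemma~\ref{continuity}, and only \eqref{eb5} requires real work: it will be reduced, via strict complementarity, to an application of Hoffman's lemma (Proposition~\ref{hoffman}).

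For \eqref{eb1} and \eqref{eb2} I would start from the observation that $\tilde{x}:=x(y^{t+1},z^t)$ is the unique minimizer of the strongly convex function $K(\cdot,z^t;y^{t+1})$ on $P$ (modulus $\gamma_K=p+\gamma$), so it is the fixed point of the projected gradient map $T(x):=[x-c\nabla_x K(x,z^t;y^{t+1})]_+$, while $x^{t+1}=T(x^t)$. Combining nonexpansiveness of $[\cdot]_+$ with the strong monotonicity of $\nabla_x K$ and the stepsize condition $c<1/L_K$ yields the one-step contraction $\|x^{t+1}-\tilde{x}\|\le(1-c\gamma_K)\|x^t-\tilde{x}\|$ (equivalently, via the projection variational inequality at both $x^t$ and $\tilde{x}$). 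The triangle inequality $\|x^t-\tilde{x}\|\le\|x^t-x^{t+1}\|+\|x^{t+1}-\tilde{x}\|$ then produces \eqref{eb1} with $\sigma_1=c\gamma_K$, and rearranging gives \eqref{eb2} with $\sigma_2=\sigma_1/(1+\sigma_1)$.

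Bounds \eqref{eb6}, \eqref{eb4} and \eqref{eb3} are nothing more than the three Lipschitz estimates in Lemma~\ref{continuity} read in reverse: dividing through, respectively, by the Lipschitz moduli $(p+\gamma)/\sigma$, $p/(p+\gamma)$ and $p/(p+\gamma)$ produces the stated constants $\sigma_3$ and $\sigma_4$.

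The dual error bound \eqref{eb5} is the main obstacle. My plan is as follows. The KKT system characterizing $Y^*(z)$ for problem~\eqref{proximal} consists of stationarity $\nabla f(x)+p(x-z)+A^{\transp}\tilde y-\tilde\mu+\tilde\nu=0$, the affine constraint $Ax=b$, the box constraint, and the complementarity conditions on $\tilde\mu,\tilde\nu$; the (unique) primal piece is $x^*(z)$. On the other hand, the optimality of $x(y,z)$ yields that the tuple $(x(y,z),\,y+\Gamma(Ax(y,z)-b),\,\hat\mu,\hat\nu)$ satisfies the stationarity identity exactly and violates only the primal feasibility by $\|Ax(y,z)-b\|$. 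Using Corollary~\ref{coro2} and Lemma~\ref{continuity}, for $\Delta$ small enough the hypotheses $\|Ax(y,z)-b\|\le\Delta$ and $\dist(z,X^*)\le\Delta$ force $x(y,z)$ to be close to $x^*(z)$ which in turn is close to a fixed stationary point in $X^*$; Assumption~\ref{feasible-int}(b) guarantees a strictly complementary active set at that stationary point, and since there are only finitely many candidate active sets, a compactness argument produces a uniform $\Delta$ for which the active sets of $x(y,z)$, $x^*(z)$ and the nearby stationary point all coincide and the signs of $\hat\mu,\hat\nu$ match those required in $Y^*(z)$. Once this alignment is in place, the complementarity constraints defining $Y^*(z)$ become affine equalities/nonnegativities with a fixed index structure, and Hoffman's bound (Proposition~\ref{hoffman}) gives $\dist(y,Y^*(z))\le\sigma_5\|Ax(y,z)-b\|$ with a Hoffman constant depending only on $A$, $\Gamma$ and the finitely many active-set patterns. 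The membership $y\in y^0+\mathrm{range}(A)$ together with Lemma~\ref{ybounded} confines $y$ to a bounded set, which lets us pick $\sigma_5$ uniformly. I expect the delicate step to be the active-set identification, since strict complementarity supplies nonzeroness of the limiting multipliers but no a priori quantitative margin; resolving this by enumerating the finitely many patterns and a standard continuity-compactness argument is the technical heart of the proof.
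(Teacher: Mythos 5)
Your handling of \eqref{eb1}--\eqref{eb3} matches the paper's in substance: the paper invokes a known global error bound for strongly convex projected gradient from the literature where you derive a one-step contraction plus triangle inequality, and \eqref{eb6}, \eqref{eb4}, \eqref{eb3} are indeed just Lemma~\ref{continuity} inverted. (Minor caveat: the contraction factor $1-c\gamma_K$ is not what the standard estimates give for $c<1/L_K$ --- one gets something like $\sqrt{1-2c\gamma_K L_K/(\gamma_K+L_K)}$ --- so your $\sigma_1$ would come out slightly different; this only perturbs constants and does not affect the lemma.) The genuine gap is in the final step of \eqref{eb5}. Hoffman's bound (Proposition~\ref{hoffman}) applies only to a \emph{linear} system, and the polyhedron $Y^*(z)$ is cut out by $A^Ty-\mu+\nu=-\nabla f(x^*(z))-p(x^*(z)-z)$ together with the sign and active-set constraints, with the primal point frozen at $x^*(z)$. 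The tuple $\bigl(y+\Gamma(Ax(y,z)-b),\mu(y,z),\nu(y,z)\bigr)$ satisfies the analogous identity with $\nabla f(x(y,z))+p(x(y,z)-z)$ on the right-hand side, so its residual with respect to the system defining $Y^*(z)$ is $\|\nabla f(x(y,z))-\nabla f(x^*(z))+p(x(y,z)-x^*(z))\|=O(\|x(y,z)-x^*(z)\|)$, \emph{not} $O(\|Ax(y,z)-b\|)$. Your claim that the tuple ``violates only the primal feasibility by $\|Ax(y,z)-b\|$'' is true only of the joint KKT system in $(x,y,\mu,\nu)$, which is nonlinear in $x$ through $\nabla f$ and therefore outside the scope of Hoffman. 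As written, your argument delivers $\dist(y,Y^*(z))\le C\|x(y,z)-x^*(z)\|$ and you still owe a bound on $\|x(y,z)-x^*(z)\|$ in terms of $\|Ax(y,z)-b\|$.

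That missing bound --- not the active-set identification, which you correctly carry out exactly as the paper does --- is the technical heart of the paper's proof. The paper closes the loop using strong convexity (modulus $p+\gamma$) of $x\mapsto f(x)+\frac{p}{2}\|x-z\|^2+\frac{\Gamma}{2}\|Ax-b\|^2$: it lower-bounds $\|x(y,z)-x^*(z)\|^2$ by the inner product of the gradient difference with $x(y,z)-x^*(z)$, substitutes both stationarity identities to rewrite that inner product as $-\langle y-y(z),A(x(y,z)-x^*(z))\rangle$ plus multiplier cross terms, and shows the cross terms vanish or have the right sign precisely because the active sets coincide (this is where the alignment you establish is actually consumed). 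Since $Ax^*(z)=b$, this yields $\|x(y,z)-x^*(z)\|^2\le C'\|Ax(y,z)-b\|\,\|y-y(z)\|$, which combined with the Hoffman estimate and the choice of $y(z)$ as the nearest point of $Y^*(z)$ gives a quadratic inequality in $\dist(y,Y^*(z))$ that resolves to \eqref{eb5}. With that strong-convexity/cross-term step inserted, your outline becomes a complete proof and coincides with the paper's.
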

\begin{proof}
%We first prove \eqref{eb1}. Let
%\begin{eqnarray*}
%% \nonumber % Remove numbering (before each equation)
% \bar x^t&=&x^t-c\nabla K(x^t,z^t;y^{t+1})\\
%\bar x(y^{t+1},z^t)&=&x(y^{t+1},z^t)-c\nabla K(x(y^{t+1},z^t),z^t;y^{t+1}).
%\end{eqnarray*}
The proof of inequalities \eqref{eb1} and \eqref{eb2}are standard, we prove them in Appendix~\ref{Appendix:D}. Also note that \eqref{eb6}, \eqref{eb4} and \eqref{eb3} are just Lemma \ref{continuity}.
It remains to prove the dual error bound \eqref{eb5}. To this end, we first write down, for any $z$, the optimality conditions for the strongly convex proximal optimization problem \eqref{proximal} (note that $\gamma_K=p+\gamma>0$)
as follows:
%# The label
\begin{eqnarray}
% \nonumber % Remove numbering (before each equation)
\nabla f(x^*(z))+p(x^*(z)-z)+A^Ty^*(z)-\mu(z)+\nu(z) &=& 0, \nonumber\label{station}\\
Ax^*(z) &=& b, \nonumber \\
\mu_i(z)(x_i-\ell_i)&=&0, \quad i=1,\cdots, n,\label{kktforproximal}\\
\nu_i(z)(x_i-u_i)&=&0, \quad i=1,\cdots, n, \nonumber\\
\mu_i(z),\; \nu_i(z)&\ge& 0,\quad i=1, \cdots, n, \nonumber%\label{kkt-end}
\end{eqnarray}
where $y^*(z),\; \mu(z),\; \nu(z)$ are the Lagrangian multipliers.
For any $z$, note that $y^*(z)$ only appears in the term $A^Ty^*(z)$. By replacing $y^*(z)$ with its projection to $\mathrm{range}(A)$ if necessary, we can assume, without loss of generality, $y^*(z)\in \mathrm{range}(A)$.
If $z\in X^*$, by strong convexity, the unique optimal solution of \eqref{proximal} is given by $x^*(z)=z$.
Moreover, for $z\in X^*$, the strict complementarity condition
\begin{eqnarray*}
% \nonumber % Remove numbering (before each equation)
\mu_i(z)=0\quad&\Longrightarrow& \quad x_i^*(z)=z_i>\ell_i,\ i=1, \cdots, n, \\
\nu_i(z)=0\quad & \Longrightarrow&\quad x_i^*(z)=z_i<u_i,\ \ i=1, \cdots, n,
\end{eqnarray*} holds.
%# the label K
Now for any $y, z$, let $I(y,z),\; I(z)$ denote the set of inactive inequality constraints (the inequality constraint holds strictly) in problem \eqref{K} at $x(y,z)$ and \eqref{proximal} at $x^*(z)$ respectively. We prove that there exists a $\Delta>0$, such that
$$
I(y, z)=I(z),\mbox{ if $\|Ax(y, z)-b\|\le \Delta$ and $\dist(z,X^*)\le \Delta$.}
$$
We prove it by contradiction. Suppose the contrary, then there is a sequence $\Delta_k\to 0$ and sequences $\{y_k\}\subseteq y^0+\mathrm{range}(A), \{z_k\}$ with
$$\|Ax(y_k, z_k)-b\|\le \Delta_k,\quad \dist(z_k,X^*)\le \Delta_k,$$
such that $I(y_k, z_k)\ne I(z_k)$, for all $k$.
Note that $y^*(z_k)$ is the optimal dual solution for the problem \eqref{proximal} with $z=z_k$, so we have $x(y^*(z_k), z_k)=x^*(z_k)$ and $\|Ax(y^*(z_k), z_k)-b\|=0$, for all $k$. By Lemma~\ref{ybounded}, we know that $\{y_k\}, \{y^*(z_k)\}$ are bounded.
So we can assume that (passing to a subsequence if necessary)
$$y_k\rightarrow y^*,\ \mbox{ for some }y^*\in \mathbb{R}^m,$$
$$y^*(z_k)\rightarrow \bar{y},\ \mbox{ for some }\bar{y}\in \mathbb{R}^m,$$
$$z_k\rightarrow z^*,\ \mbox{ for some }z^*\in X^*.$$
According to Lemma \ref{continuity}, we have $x^*(z^k)\rightarrow x^*(z^*)$ and $x(y_k, z_k)\rightarrow x(y^*, z^*)$.
We also have $Ax(y^*, z^*)-b=0$, hence $x(y^*, z^*)=x(z^*)$.
Since for any $i$ at most one of $\mu_i$ and $\nu_i$ can be nonzero, $\{\mu_i(z_k)\}_{k=1}^{\infty}, \{\nu_i(z^k)\}_{k=1}^{\infty}$ are bounded by \eqref{station} and the fact that $y^{*}(z_k)$ is bounded. Hence, passing to a subsequence if necessary, we can assume that there exist $\mu^*$, $\nu^*$, such that
$$\mu_i(z_k)\rightarrow \mu^*, \ \ \nu_i(z_k)\rightarrow \nu^*,\ \mbox{ for all }i.$$
Hence, $(x^*(z^*), \bar{y}, \mu^*, \nu^*)$ is a solution to the KKT system \eqref{kktforproximal}. Using the strict complementarity property at $z^*\in X^*$, we have $\mu_i, \nu_i\ne 0, i\notin I(z^*)$. When $k\rightarrow \infty$, we have
\begin{eqnarray*}
x_i^*(z_k)\in (\ell_i, u_i), &&i\in I(z^*),\\
\mu_i>0, &&i\notin I(z^*)\mbox{ and } x_i^*(z_k)=\ell_i,\\
\nu_i>0, && i\notin I(z^*)\mbox{ and } x_i^*(z_k)=u_i.
\end{eqnarray*}
%#
%#
This means that $I(z_k)=I(z^*)$. Similarly, we can consider the KKT conditions for \eqref{K} and can show that for large enough $k$ there holds $x(y_k,z_k)\to x(y^*,z^*)=x^*(z^*)$, and $I(y_k,z_k)=I(y^*,z^*)=I(z^*)$. This implies $I(y_k,z_k)=I(z^*)=I(z_k)$ for large $k$, a contradiction.
So next we assume that $I(y, z)=I(z)$. Also define $A_1=\{i\mid x_i^*(z)=x_i(y, z)=\ell_i\}$ and $A_2=\{i\mid x_i^*(z)=x_i(y, z)=u_i\}$.
Recall the optimization problem \eqref{K} for function $K$:
\begin{equation}\label{optK}
\begin{array}{ll}
\mbox{minimize }& \displaystyle f(x)+\frac{p}{2}\|x-z\|^2+y^T(Ax-b)+\frac{\Gamma}{2}\|Ax-b\|^2\\[5pt]
\mbox{subject to }&\ell_i\le x_i\le u_i, \ i=1, \cdots, n,
\end{array}
\end{equation}
and its KKT conditions:
\begin{eqnarray}%\label{kktforK}
% \nonumber % Remove numbering (before each equation)
\nabla f(x(y, z))+p(x(y, z)-z)+A^Ty+\Gamma A^T(Ax(y,z)-b)-\mu(y, z)+&&\nu(y,z) = 0, \nonumber\\
\mu_i(y, z)\ge 0,\;\ell_i-x_i(y,z)=0, \quad&& \hfill i\in A_1,\nonumber\\
\nu_i(y, z)\ge 0,\; x_i(y, z)-u_i=0,\quad&& \hfill i\in A_2,\label{kktforK}\\
\mu_i(y, z)=0,\; \nu_i(y, z)=0,\quad && \hfill i\not\in A_1\cup A_2,\nonumber\\
x_i(y,z)\in [\ell_i, u_i],\quad && \hfill i\in I(z).\nonumber%\label{kkt-end}
\end{eqnarray}

Notice that the optimality conditions \eqref{kktforproximal} for the convex proximal problem \eqref{proximal} can be rewritten (using the information about the active and inactive sets) as:
%##
\begin{eqnarray}
% \nonumber % Remove numbering (before each equation)
\nabla f(x^*(z))+p(x^*(z)-z)+A^Ty^*(z)-\mu(z)+\nu(z) = 0,\quad&& \hfill \nonumber\\
Ax^*(z)-b=0,\quad&& \hfill \nonumber\\
\mu_i(z)\ge 0,\;\ell_i-x_i^*(z)=0,\quad&& \hfill i\in A_1,\label{kktforproximallinear0}\\
\nu_i(z)\ge 0,\; (x_i^*(z)-u_i)=0,\quad&& \hfill i\in A_2,\nonumber\\
\mu_i(z),\; \nu_i(z)\ge 0,\; x_i^*(z)\in[\ell_1, u_1],\quad&& \hfill i\in I(z).\nonumber
\end{eqnarray}
Let $g(x; z)=f(x)+\frac{p}{2}\|x-z\|^2+\frac{\Gamma}{2}\|Ax-b\|^2$. Consider the following linear system.
\begin{eqnarray}\label{linear system}
% \nonumber % Remove numbering (before each equation)
A^Ty^*(z)-\mu(z)+\nu(z) = -\nabla g(x^*(z); z),\quad&& \hfill \nonumber\\
\mu_i(z)\ge 0,\;\ell_i-x_i^*(z)=0,\quad&& \hfill i\in A_1,\label{kktforproximallinear}\\
\nu_i(z)\ge 0,\; (x_i^*(z)-u_i)=0,\quad&& \hfill i\in A_2,\nonumber\\
\mu_i(z),\; \nu_i(z)\ge 0,\; x_i^*(z)\in[\ell_1, u_1],\quad&& \hfill i\in I(z).\nonumber
\end{eqnarray}
Notice that in the above system, the left-hand-side is linear of $y(z), \mu(z), \nu(z)$.
By \eqref{kktforK}, the vector $(x(y,z),y,\mu(y,z),\nu(y,z))$ satisfies \eqref{kktforproximal} approximately.
Using Hoffman bound (Proposition~\ref{hoffman}) to \eqref{kktforproximallinear} at the point $(y,\mu(y,z),\nu(y,z))$, we obtain
\begin{eqnarray*}
\dist(y, Y^*(z))^2&\le &\displaystyle \min_{(x^*(z), y(z),\mu(z),\nu(z))\atop \mbox{\scriptsize{ satisfying \eqref{kktforproximallinear}}}} %\
\|y-y(z)\|^2+\|(\mu(y,z),\nu(y,z))-(\mu(z),\nu(z))\|^2\\
%&&\mbox{ for some }(x^*(z),\mu(z),\nu(z))\\
&\le & 2\theta^2\|\nabla g(x(y, z); z)-\nabla g(x^*(z); z)\|^2.\\ [3pt]
&\le& 2\theta^2\left((L+p+\Gamma\sigma^2)^2\|x(y, z)-x^*(z)\|^2\right), %\nonumber\\
\end{eqnarray*}
where $\sigma$ is the largest singular value of $A$. Using Assumption~\ref{feasible-int}(d) and the strong convexity of $g(\cdot; z)$, we obtain
\begin{eqnarray}
\dist(y, Y\!&^*(&\!z))^2\le 2\theta^2\frac{(L+p+\Gamma\sigma^2)^2}{p+\gamma}\langle \nabla f(x(y,z))-\nabla f(x^*(z))+p(x(y,z)-x^*(z)), \nonumber\\
&&x(y, z)-x^*(z)\rangle.\nonumber\\
&=&-\frac{2\theta^2 (L+p+\Gamma\sigma^2)^2}{p+\gamma}\langle y-y(z),A(x(y, z)-x^*(z))\rangle\nonumber\\
&&+\frac{2\theta^2 (L+p+\Gamma\sigma^2)^2}{p+\gamma}\langle (\mu(y, z)-\mu(z))-(\nu(y, z)-\nu(z)), x(y, z)-x^*(z)\rangle \label{zero}\\
&\le& 2\theta^2\left(\frac{(L+p+\Gamma\sigma^2)^2}{p+\gamma}\|Ax(y, z)-b\|
\|y-y(z)\|\right),\label{key}
\end{eqnarray}
where the equality follows from \eqref{kktforK}-\eqref{kktforproximallinear} and the cross term \eqref{zero} vanishes because we have
\[
\begin{array}{rrr}
\mu_i(y, z)(x_i(y, z)-\ell_i)=0,&\quad
\mu_i(z)(x_i^*(z)-\ell_i)=0,& \quad i=1,2,...,n,\nonumber \\[5pt]
\mu_i(y, z)(x_i^*(z)-\ell_i)\ge0,&\quad
\mu_i(z)(x_i(y, z)-\ell_i)\ge 0,&\quad i=1,2,...,n, \nonumber \\[5pt]
\nu_i(y, z)(u_i-x_i(y, z))=0,&\quad
\nu_i(z)(u_i-x_i^*(z))=0,&\quad i=1,2,...,n,\nonumber\\[5pt]
\nu_i(y, z)(u_i-x_i^*(z))\ge0,&\quad
\nu_i(z)(u_i-x_i(y, z))\ge0,&\quad i=1,2,...,n.
\end{array}
\]
Hence we finish the proof of the dual error bound \eqref{eb5}.
\end{proof}
\noindent{\bf Remark}. Notice that the error bounds \eqref{eb1}, \eqref{eb2} hold for all strongly convex problems over a convex set when a linear term in the objective function is perturbed.
We now give the proof of Lemma \ref{proximal-descent}.

\begin{proof}[Proof of Lemma \ref{proximal-descent}]
First, using Danskin's theorem in convex analysis \cite{[25]}, we have:
$$\nabla M(z^t)=p(z^t-x^*(z^t)),$$
where $M(z)$ is defined by \eqref{P}.
So it suffices to prove that
$$\|\nabla M(z^t)-\nabla M(z^{t+1})\|\le ({\sigma}^{-1}_4+1)\|z^t-z^{t+1}\|.$$
But this is a direct corollary of the error bound \eqref{eb6} in Lemma \ref{error bound}. The proof is complete.
\end{proof}
\subsection{Proof of Theorem~\ref{th:main}}
\begin{proof}
Using the three descent lemmas in Subsection~\ref{sub:3descent}, we get
\begin{eqnarray}
&&\phi^t-\phi^{t+1}\nonumber\\
%##
&\ge & (\frac{1}{2c}\|x^{t+1}-x^t\|^2-\alpha\|Ax^t-b\|^2+\frac{p}{2\beta}\|z^t-z^{t+1}\|^2)\nonumber\\
&&+2 \left(\alpha(Ax^t-b)^T(Ax(y^{t+1}, z^t)-b)+p(z^{t+1}-z^t)^T(z^{t+1}+z^t-2x(y^{t+1}, z^{t+1})) \right)\nonumber\\
&&+2 (p(z^{t+1}-z^t)^T(x^*(z^t)-z^t)-\frac{p\tilde{L}}{2}\|z^{t+1}-z^t\|^2 )\nonumber\\
&=& (\frac{1}{2c}\|x^{t+1}-x^t\|^2-\alpha\|Ax^t-b\|^2+\frac{p}{2\beta}\|z^t-z^{t+1}\|^2)
%\nonumber\\
%&&
+2\alpha(Ax^t-b)^T(Ax(y^{t+1}, z^t)-b)\nonumber\\
&&+p(z^{t+1}-z^t)^T\left((z^{t+1}-z^t)-2(x(y^{t+1}, z^{t+1})-x^*(z^t))\right)
%nonumber\\
%&&
-\frac{p\tilde{L}}{2}\|z^t-z^{t+1}\|^2\nonumber\\
&=& {\color{black}(\frac{1}{2c}\|x^{t+1}-x^t\|^2-\alpha\|Ax^t-b\|^2+\frac{p}{2\beta}\|z^t-z^{t+1}\|^2)
+2\alpha(Ax^t-b)^T(Ax(y^{t+1}, z^t)-b)}\nonumber\\
&&+p(z^{t+1}-z^t)^T\left((z^{t+1}-z^t)-2(x(y^{t+1}, z^{t+1})-x(y^{t+1}, z^t))-2(x(y^{t+1}, z^t)-x^*(z^t))\right)\nonumber\\
&&-\frac{p\tilde{L}}{2}\|z^t-z^{t+1}\|^2.\nonumber\\
\label{first descent}
\end{eqnarray}
Let $\lambda$ be an arbitrary positive scalar, and by the fact that
$$\|(z^{t+1}-z^t)/\lambda-\lambda(x(y^{t+1}, z^t)-x^*(z^t))\|^2\ge 0,$$
we have
$$-2(z^{t+1}-z^t)^T(x(y^{t+1}, z^t)-x^*(z^t))\ge -\|z^t-z^{t+1}\|^2/\lambda-\lambda\|x(y^{t+1}, z^{t})-x^*(z^t)\|^2.$$
Using Cauchy-Schwarz inequality and the error bound \eqref{eb3} in Lemma~\ref{error bound}, we have
\begin{eqnarray*}
-2(z^{t+1}-z^t)^T(x(y^{t+1}, z^{t+1})-x(y^{t+1}, z^t))&\ge& -\|z^t-z^{t+1}\|\|x(y^{t+1}, z^{t+1})-x(y^{t+1}, z^t)\|\\
&\ge& -\frac{1}{\sigma_4}\|z^t-z^{t+1}\|^2.
\end{eqnarray*}
Substituting these two inequalities into \eqref{first descent}, we have
\begin{eqnarray*}
&&\phi^t-\phi^{t+1}\nonumber\\
&\ge & \frac{1}{2c}\|x^{t+1}-x^t\|^2-\left(\alpha\|Ax^t-b\|^2-2\alpha(Ax^t-b)^T(Ax(y^{t+1}, z^t)-b)+\alpha\|Ax(y^{t+1}, z^t)-b\|^2\right)\nonumber\\
&&+\alpha\|Ax(y^{t+1}, z^t)-b\|^2%\nonumber\\
%&&
+(\frac{p}{2\beta}+p-\frac{p}{\sigma_4}-\frac{p}{\lambda}-\frac{p\tilde{L}}{2})\|z^t-z^{t+1}\|^2\nonumber\\
&&-p\lambda\|x(y^{t+1}, z^t)-x^*(z^t)\|^2.
\end{eqnarray*}
By completing the square, we further obtain
\begin{eqnarray}
%\nonumber\\
\phi^t-\phi^{t+1}
& \ge& \frac{1}{2c}\|x^{t+1}-x^t\|^2-\alpha\|A(x(y^{t+1}, z^t)-x^t)\|^2
%\nonumber\\
%&&
+\alpha\|Ax(y^{t+1}, z^t)-b\|^2\nonumber\\
&&+(\frac{p}{2\beta}+p-\frac{p}{\sigma_4}-\frac{p}{\lambda}-\frac{p\tilde{L}}{2})\|z^t-z^{t+1}\|^2
%\nonumber\\
%&&
-p\lambda\|x(y^{t+1}, z^t)-x^*(z^t)\|^2\nonumber\\[3pt]
&\ge& (\frac{1}{2c}-\frac{\alpha\sigma^2}{ \sigma_1^2})\|x^t-x^{t+1}\|^2+\alpha\|Ax(y^{t+1}, z^t)-b\|^2
\nonumber\\
&&
+(\frac{p}{2\beta}+p-\frac{p}{\sigma_4}-\frac{p}{\lambda}-\frac{p\tilde{L}}{2})\|z^t-z^{t+1}\|^2
%\nonumber\\
%&&
-p\lambda\|x(y^{t+1}, z^t)-x^*(z^t)\|^2, \label{one}
\end{eqnarray}
where $\sigma$ is the spectral norm of $A$, $\lambda$ is any positive scalar, and the last step is due to the error bound \eqref{eb1} in Lemma~\ref{error bound}.
Let $\lambda=D\beta$ for some sufficiently large $D$ (for example, $D>6$), and set $\beta$ sufficiently small ($\beta\le \beta'$) for some constant $\beta'>0$, such that
$$\frac{p}{2\beta}+p-\frac{p}{\sigma_4}-\frac{p}{\lambda}-\frac{p\tilde{L}}{2}\ge \frac{p}{3\beta}.$$
Therefore, if we choose $\alpha< \frac{\sigma^{2}_1}{4c\sigma^2 }$, %(pay attention)
then it follows from \eqref{one} that
%## \sigma_1 or \sigma_2?
\begin{eqnarray}
%&&\hspace*{-1cm}\left(\frac{1}{2c}-\frac{\alpha\sigma^2}{ \sigma_1^2}\right)\|x^t-x^{t+1}\|^2+2\alpha\|Ax(y^{t+1}, z^t)-b\|^2\\
%&&+(p/2\beta+p-\frac{p}{\sigma_4}-p/\lambda-\frac{p}{2\sigma_4})\|z^t-z^{t+1}\|^2\nonumber\\
%&&-p\lambda\|x(y^{t+1}, z^t)-x^*(z^t)\|^2\\
\phi^t-\phi^{t+1}&\ge& \frac{1}{4c}\|x^t-x^{t+1}\|^2+\alpha\|Ax(y^{t+1}, z^t)-b\|^2\nonumber\\
&&+\frac{p}{3\beta}\|z^t-z^{t+1}\|^2-pD\beta\|x(y^{t+1}, z^t)-x^*(z^t)\|^2. \label{key1}
\end{eqnarray}
It remains to bound the term $pD\beta\|x(y^{t+1}, z^t)-x^*(z^t)\|^2$ in the above expression. The main proof idea is as follows.
In view of the dual error bound \eqref{eb5}, when the residuals are sufficiently small, we can use the dual residual $\|Ax(y^{t+1}, z^t)-b\|$ to bound $\dist(y^{t+1}, Y^*(z^t))$ and then further use the error bound \eqref{eb6} to bound $\|x(y^{t+1}, z^t)-x^*(z^t)\|$. When some residual is not too small, we will make use of the compactness of the feasible set to bound the term $\|x(y^{t+1}, z^t)-x^*(z^t)\|^2$.
Define
\begin{equation}\label{def1}
M=\max_{x_1, x_2\in P}\|x_1-x_2\|,\quad \zeta=\min\{\Delta, \delta(\Delta/\sqrt{6D})\}
\end{equation}
and set
\begin{equation}\label{def2}
\beta<\min\left\{\beta', \frac{\zeta^2}{8cpDM^2}, \frac{\zeta^2\alpha}{2pDM^2}, \frac{\alpha\sigma_3^2}{2pD\sigma_5^{2}}\right\},
\end{equation}
where $\Delta$ is defined in Lemma~\ref{error bound} and $\delta(\cdot)$ is defined in Corollary \ref{coro1} and Corollary~\ref{coro2}.
We also define the following three conditions:
\begin{eqnarray}\label{cond1}
\|x^t-x^{t+1}\|^2\le 8cpDM^2\beta, \\
\|Ax(y^{t+1}, z^t)-b\|^2\le \frac{2pDM^2}{\alpha} \beta,\label{cond2}\\
\|x^{t+1}-z^t\|^2=\|(z^t-z^{t+1})/\beta\|^2\le 6D\|x(y^{t+1}, z^t)-x^*(z^t)\|^2.\label{cond3}
\end{eqnarray}
We now consider two cases.\\
\noindent {\bf Case 1.} Conditions \eqref{cond1}-\eqref{cond3} hold. In this case, it follows from \eqref{def1}-\eqref{def2} that
\begin{eqnarray*}
\|x^t-x^{t+1}\|\le \zeta = \min\{\Delta, \delta(\Delta/\sqrt{6D})\}\le \Delta,\\
\|Ax(y^{t+1}, z^t)-b\|\le \zeta = \min\{\Delta, \delta(\Delta/\sqrt{6D})\}\le \Delta,
\end{eqnarray*}
which further implies
\[
\|x^{t+1}-z^t\|=\|(z^t-z^{t+1})/\beta\|\le \sqrt{6D}\|x(y^{t+1}, z^t)-x^*(z^t)\|\le \sqrt{6D}\frac{\Delta}{\sqrt{6D}}=\Delta,
\]
where the last inequality follows from Corollary~\ref{coro2}.
Therefore, the error bounds \eqref{eb6} and \eqref{eb5} in Lemma~\ref{error bound} hold and we have
%##
\begin{eqnarray}\label{eq:1}
pD\beta\|x(y^{t+1}, z^t)-x^*(z^t)\|^2
&\le & pD\beta\sigma_3^{-2}\cdot\dist(y^{t+1}, Y^*(z^t))^2 \nonumber\\
&\le & pD\beta\sigma_3^{-2}\sigma_5^2\|Ax(y^{t+1}, z^t)-b\|^2 \nonumber\\
&\le & \frac{\alpha}{2}\|Ax(y^{t+1}, z^t)-b\|^2,
\end{eqnarray}
where the last step follows from \eqref{def2}.
It then follows from \eqref{key1} that
\begin{eqnarray}\label{key3}
\phi^t-\phi^{t+1}
&\ge& \frac{1}{4c}\|x^t-x^{t+1}\|^2+\frac{\alpha}{2}\|Ax(y^{t+1}, z^t)-b\|^2+\frac{p}{3\beta}\|z^t-z^{t+1}\|^2.
\end{eqnarray}
{\bf Case 2.} One of the conditions \eqref{cond1}-\eqref{cond3} is violated. Consider three subcases:
\begin{enumerate}
\item [] \underline{Case 2.1.} $\|x^t-x^{t+1}\|^2\ge 8cpD\beta M^2$. In this case, we have
\begin{eqnarray}\label{eq:2.1}
% \nonumber % Remove numbering (before each equation)
\frac{1}{4c}\|x^t-x^{t+1}\!\!&\|^2&\!\!-pD\beta \|x(y^{t+1}, z^t)-x^*(z^t)\|^2 \nonumber \\
&\ge & \frac{1}{8c}\|x^t-x^{t+1}\|^2+\frac{1}{8c}{8cpD\beta M^2}-pD\beta \|x(y^{t+1}, z^t)-x^*(z^t)\|^2 \nonumber\\
&\ge& \frac{1}{8c}\|x^t-x^{t+1}\|^2+{pD\beta M^2}-pD\beta M^2 \nonumber\\
&= & \frac{1}{8c}\|x^t-x^{t+1}\|^2,
\end{eqnarray}
where the second step is due to \eqref{def1}.
\item [] \underline{Case 2.2.} $\|Ax(y^{t+1}, z^t)-b\|^2\ge \frac{2pD \beta}{\alpha}M^2$. In this case, we use this condition and \eqref{def1} to obtain
\begin{eqnarray}\label{eq:2.2}
\alpha\|Ax(y^{t+1}, z^t)-b\|^2\!\!&-&\!\! pD\beta\|x(y^{t+1}, z^t)-x^*(z^t)\|^2 \nonumber\\
&\ge &\frac{\alpha}{2}\|Ax(y^{t+1}, z^t)-b\|^2+\frac{\alpha}{2}\cdot\frac{2pD \beta}{\alpha}M^2-pD\beta M^2 \nonumber\\
%&\ge& \frac{\alpha}{2}\|Ax(y^{t+1}, z^t)\|^2+pD \beta M^2-pDM^2\\
&=& \frac{\alpha}{2}\|Ax(y^{t+1}, z^t)-b\|^2.
\end{eqnarray}
%where the second step is due to \eqref{def1} and the last step follows from \eqref{def2}.
\item []\underline{Case 2.3.} $\|(z^t-z^{t+1})/\beta\|^2\ge 6D\|x(y^{t+1}, z^t)-x^*(z^t)\|^2$. In this case, we have
\begin{eqnarray}\label{eq:2.3}
\frac{p}{3\beta}\|z^t-z^{t+1}\|^2\!\!&-&\!\! pD\beta\|x(y^{t+1}, z^t)-x^*(z^t)\|^2 \nonumber\\
&\ge&\frac{p}{3\beta}\|z^t-z^{t+1}\|^2-pD\beta \|x(y^{t+1}, z^t)-x^*(z^t)\|^2 \nonumber\\
&\ge & \frac{p}{6\beta}\|z^t-z^{t+1}\|^2+\frac{p}{6\beta}6\beta^2D\|x(y^{t+1}, z^t)-x^*(z^t)\|^2 \nonumber\\
&&-pD\beta \|x(y^{t+1}, z^t)-x^*(z^t)\|^2 \nonumber\\
&= & \frac{p}{6\beta}\|z^t-z^{t+1}\|^2.
\end{eqnarray}
%where the last step follows from \eqref{def2}.
\end{enumerate}
Considering \eqref{key1}, we have in all three subcases:
\begin{equation*}
\phi^t-\phi^{t+1}
\ge \frac{1}{8c}\|x^t-x^{t+1}\|^2+\frac{\alpha}{2}\|Ax(y^{t+1}, z^t)-b\|^2+\frac{p}{6\beta}\|z^t-z^{t+1}\|^2.
\end{equation*}
%red
Combining \eqref{eq:1}, \eqref{eq:2.1}, \eqref{eq:2.2} and \eqref{eq:2.3} yields
%redend
%correction
\begin{equation}\label{dual bound for x}
pD\beta\|x(y^{t+1}, z^t)-x^*(z^t)\|^2\le \frac{1}{8c}\|x^t-x^{t+1}\|^2+\frac{\alpha}{2}\|Ax(y^{t+1}, z^t)-b\|^2+\frac{p}{6\beta}\|z^t-z^{t+1}\|^2.
\end{equation}
Then according to \eqref{key3}, we have
\begin{equation}\label{potential-descent}
\phi^t-\phi^{t+1}\ge\frac{1}{8c}\|x^t-x^{t+1}\|^2+\frac{\alpha}{2}\|Ax(y^{t+1}, z^t)-b\|^2+\frac{p}{6\beta}\|z^t-z^{t+1}\|^2,\quad \forall\; t\ge0.%\nonumber
\end{equation}
Since $\phi^t$ is bounded below, we must have
$$\max\{\|x^{t+1}-x^t\|, \|Ax(y^{t+1}, z^t)-b\|, \|z^t-x(y^{t+1}, z^t)\|\}\to 0.$$
This together with Corollary \ref{coro1} shows that the KKT condition for \eqref{P} is satisfied in the limit. This completes the proof.
\end{proof}

Notice that we can take $D=7$, then the threshold for the stepsize $\beta$ is given in \eqref{def2}, where $\zeta$ and $M$ are defined in \eqref{def1}. Since $\zeta$ is defined by constants $\Delta,\ \delta(\Delta)$ from the  ``local dual error bound'', it is difficult to derive an explicit bound for the stepsize $\beta$. In practice, we will need to tune the stepsize $\beta$ to ensure fast convergence. To give an explicit upper-bound for $\beta$, a ``global'' error bound is needed. This will be the focus of a new paper under planning.

Theorem~\ref{th:main} establishes the global convergence of Algorithm~\ref{Alg2} to a stationary solution. However, it does not
address the question of convergence rate. The latter is considered in the next section. We will show that the iteration complexity is $O(1/\epsilon^2)$ for a fixed problem and
for the special case when the objective function is (nonconvex) quadratic the convergence rate is linear. %For QP it is linear-convergence.

\section{Iteration Complexity and Linear Convergence}
\subsection{$O(1/\epsilon^2)$ iteration complexity}
In this subsection, we will see that the iteration complexity of Algorithm~\ref{Alg2} is $O(1/\epsilon^2)$ to attain an $\epsilon$-stationary solution.
We first define the $\epsilon$-stationary solution as in \cite{Monteiro}. Let $\iota(x)$ be the indicate function of the set $P$, i.e. , $\iota(x)=0$ if $x\in P$ and $\iota(x)=\infty$ otherwise.
\begin{definition}
We say that $(x, y)$ is an $\epsilon$-solution of \eqref{P} if $\|Ax-b\|\le \epsilon$ and  there exists a vector  $v\in \nabla f(x)+A^Ty+\partial{\iota(x)}$ with $\|v\|\le \epsilon$.
\end{definition}
\begin{theorem}
There exists a constant $B>0$ such that  for any $t>0$, we can find an $s\in \{0, 1, \cdots, t-1\}$ such that $(x^{s+1}, y^{s+1})$ is a $B/\sqrt{t}$-solution. In other word, we can find an $\epsilon$- solution within $B^2/\epsilon^2$ iterations.
\end{theorem}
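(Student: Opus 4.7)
The natural starting point is the per-iteration descent inequality~\eqref{potential-descent} already established for the potential function $\phi^t = K(x^t,z^t;y^t) - 2d(y^t,z^t) + 2M(z^t)$, namely
\[
\phi^t - \phi^{t+1} \ge \frac{1}{8c}\|x^t-x^{t+1}\|^2 + \frac{\alpha}{2}\|Ax(y^{t+1},z^t)-b\|^2 + \frac{p}{6\beta}\|z^t-z^{t+1}\|^2.
\]
Summing from $s=0$ to $t-1$ and using the uniform lower bound $\phi^t \ge \underline{f}$ from~\eqref{phi} gives a telescoping bound of the form $\sum_{s=0}^{t-1}(\cdots) \le \phi^0 - \underline f$, a constant. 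By averaging (pigeonhole), there exists $s\in\{0,\ldots,t-1\}$ for which each of the three quantities $\|x^s-x^{s+1}\|^2$, $\|Ax(y^{s+1},z^s)-b\|^2$, and $\|z^s-z^{s+1}\|^2$ is bounded by $C/t$ for a constant $C$ depending only on $c,\alpha,\beta,p$ and $\phi^0-\underline f$.

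Next I would translate these three $O(1/\sqrt{t})$ residuals into the two quantities required by the $\epsilon$-stationarity definition, namely $\|Ax^{s+1}-b\|$ and a subgradient element $v\in\nabla f(x^{s+1})+A^{T}y^{s+1}+\partial\iota(x^{s+1})$. Reading the projection step $x^{s+1}=[x^s-c\nabla_x K(x^s,z^s;y^{s+1})]_+$ as a variational inequality on $P$, I obtain the inclusion
\[
\tfrac{1}{c}(x^s-x^{s+1}) - \nabla_x K(x^s,z^s;y^{s+1}) \in \partial\iota(x^{s+1}).
\]
Substituting $\nabla_x K(x^s,z^s;y^{s+1}) = \nabla f(x^s)+A^{T}y^{s+1}+\Gamma A^{T}(Ax^s-b)+p(x^s-z^s)$ and adding $\nabla f(x^{s+1})+A^{T}y^{s+1}$, I recover the canonical residual
\[
v = [\nabla f(x^{s+1})-\nabla f(x^s)] + \tfrac{1}{c}(x^s-x^{s+1}) - \Gamma A^{T}(Ax^s-b) - p(x^s-z^s),
\]
whose norm is controlled, via the Lipschitz assumption on $\nabla f$, by $(L+1/c)\|x^s-x^{s+1}\| + \Gamma\sigma\|Ax^s-b\| + p\|x^s-z^s\|$.

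It remains to bound each of these three quantities by $O(1/\sqrt{t})$. The first is immediate from the choice of $s$. For $\|x^s-z^s\|$, I split it as $\|x^s-x^{s+1}\|+\|x^{s+1}-z^s\|$ and observe that $\|x^{s+1}-z^s\|=\beta^{-1}\|z^{s+1}-z^s\|$, so both pieces are $O(1/\sqrt{t})$. For $\|Ax^s-b\|$, the key is the primal error bound~\eqref{eb1}, which gives $\|x^s - x(y^{s+1},z^s)\|\le \sigma_1^{-1}\|x^{s+1}-x^s\|$; the triangle inequality applied with $Ax(y^{s+1},z^s)-b$ then yields $\|Ax^s-b\|=O(1/\sqrt{t})$. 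Finally, $\|Ax^{s+1}-b\|\le \sigma\|x^{s+1}-x^s\|+\|Ax^s-b\|=O(1/\sqrt{t})$. Collecting the constants produces a single $B>0$ for which $\max\{\|Ax^{s+1}-b\|,\|v\|\}\le B/\sqrt{t}$, giving the desired complexity $B^2/\epsilon^2$.

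The only delicate point is the second step above: the projection's variational characterization supplies a subgradient only in terms of $\nabla_x K$ evaluated at $(x^s,z^s,y^{s+1})$, not at $(x^{s+1},z^{s+1},y^{s+1})$, and producing a residual of the form $\nabla f(x^{s+1})+A^{T}y^{s+1}+\partial\iota(x^{s+1})$ forces one to absorb the mismatch terms $\nabla f(x^{s+1})-\nabla f(x^s)$, $\Gamma A^{T}(Ax^s-b)$, and $p(x^s-z^s)$ into the residual norm. The closure of the argument therefore relies on showing simultaneously that $\|x^s-x^{s+1}\|$, $\|Ax^s-b\|$, and $\|x^s-z^s\|$ are all small for the \emph{same} index $s$, which is exactly what the three-term descent inequality~\eqref{potential-descent} delivers; no additional ingredient beyond the lemmas already proved is needed.
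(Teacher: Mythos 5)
Your proposal is correct and follows essentially the same route as the paper: pigeonhole on the telescoping sum of the descent inequality \eqref{potential-descent} to find an index $s$ with all three residuals $O(1/t)$, then read the projection step as a variational inequality to extract a subgradient residual $v$, bounding it and $\|Ax^{s+1}-b\|$ via the Lipschitz continuity of $\nabla_x K$ and the primal error bounds of Lemma \ref{error bound}. The only (immaterial) differences are that you anchor the mismatch terms at $x^s$ while the paper anchors them at $x^{s+1}$, and you reach $\|Ax^{s+1}-b\|$ through $\|Ax^s-b\|$ and \eqref{eb1} rather than directly through \eqref{eb2}; both yield the same $B/\sqrt{t}$ rate with different constants.
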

\noindent{\bf Remark.}Note that the constant $B$ depends on some ``local'' constants and error bounds related to the local geometry of the problem.  Global error bounds are needed for a global complexity analysis, which depends on global constants. This interesting topic will be left as future work in our follow-up paper.
\begin{proof}
According to \eqref{phi} , for any $t>0$, there exists an $s\in\{0, 1, \cdots, t-1\}$ such that$\phi^s-\phi^{s+1}\le \frac{\phi^0-\underline{f}}{t}$.  Then by inequality \eqref{potential-descent}, letting
$$C=(\phi^0-\underline{f})\cdot \max\{8c, 2/\alpha, 6\beta/p\},$$
 we have
\begin{eqnarray}\label{eq:4.3}
\|x^s-x^{s+1}\|^2&<&C/t;\\
\|Ax(y^{s+1}, z^s)-b\|^2&<&C/t;\\
\|x^{s+1}-z^s\|^2&<&C/t,
\end{eqnarray}
According to Algorithm \ref{Alg2}, we have
$$x^{s+1}= \arg\min_{x}\{\langle \nabla_xK(x^s, z^s; y^{s+1}), x-x^s \rangle+\frac{1}{c}\|x-x^s\|^2+\iota(x)\}.$$
Hence, by the optimality condition,
$$0\in \nabla_xK(x^s, z^s;  y^{s+1})+\frac{2}{c}(x^{s+1}-x^s)+\partial{\iota(x^{s+1})}.$$
Letting
$$v=\nabla_xK(x^{s+1}, z^s; y^{s+1})-\nabla_xK(x^s, z^s; y^{s+1})-\frac{2}{c}(x^{s+1}-x^s)-\Gamma(Ax^{s+1}-b)-p(x^{s+1}-z^s),$$
we have
\begin{eqnarray}
v\in \nabla_x(f(x^{s+1}+A^Ty^{s+1}))+\partial{(\iota(x^{s+1}))}.
\end{eqnarray}
By inequalities \eqref{eq:4.3} and \eqref{eb2}, we have
\begin{eqnarray*}\label{eq:4.4}
\|Ax^{s+1}-b\|&\le&\|Ax(y^{s+1}, z^s)-b\|+\|A(x^{s+1}-x(y^{s+1}, z^s))\|\\
&\le&\sqrt{C}/\sqrt{t}+\sigma\sigma_2^{-1} \sqrt{C}/\sqrt{t}\\
&\le&\sqrt{C}(1+\sigma\sigma_2^{-1})/\sqrt{t},
\end{eqnarray*}
where the first inequality is because of triangle inequality. %and $B_1=C(1+\sigma\sigma_2^{-1})^2$.
Then we have
\begin{eqnarray*}
\|v\|&\le& (L+p+\Gamma\sigma^2+2/c)\|x^s-x^{s+1}\|+\Gamma\|Ax^{s+1}-b\|+p\|x^{s+1}-z^s\|\\
&\le& (L+p+\Gamma\sigma^2+2/c)\sqrt{C}/\sqrt{t}+\Gamma(\sqrt{C}/\sqrt{t}+\sqrt{C}\sigma\sigma_2^{-1}/\sqrt{t})+p\sqrt{C}/\sqrt{t}\\
&\le&\sqrt{B}/\sqrt{t},
\end{eqnarray*}
where the first inequality is due to the triangle inequality and the Lipschitz continuity of $\nabla_xK(\cdot, z; y)$, the second inequality is because of inequalities \eqref{eq:4.3}, \eqref{eq:4.4} and
$$B=((L+p+\Gamma\sigma^2+2/c)+\Gamma(1+\sigma\sigma_2^{-1})+p)^2C.$$
Notice that $B>0$. Then the result holds for $v$ and $B$ and $(x^{s+1}, y^{s+1})$ is a $B/\sqrt{t}$-solution.

\end{proof}
\subsection{Linear convergence for quadratic programming}
In this subsection, we consider a nonconvex quadratic program (QP), which is a special case of \eqref{P} with $f(x)$ being a quadratic function
\begin{equation}\label{quadratic}
f(x)=\frac{1}{2}x^TQx+r^Tx.
\end{equation}
We will strengthen Theorem~\ref{th:main} in this case by showing that Algorithm~\ref{Alg2} converges linearly to a stationary point of the nonconvex QP problem.
{\textcolor{black}By Theorem~\ref{th:main}, we have $\dist(x^t,X^*)\to 0$, $\dist(z^t, X^*)\to 0$ and $\|x^{t+1}-x^t\|\to 0$ as $t\to\infty$. Since $X^*$ is the union a finite number of polyhedral sets, it follows that the connected components of $X^*$ are properly separated in the sense that there is a positive distance between each pair of distinct connected components of $X^*$. As a result, the sequences $\{x^t\}$ and $\{z^t\}$ will both converge to one unique connected component of $X^*$. Moreover, it is known \cite{eb-survey} that for a quadratic programming problem, the objective function value $f(x)$ is constant on each of the connected component of $X^*$. Let $f^*$ denote the value of $f(x)$ over the connected component of $X^*$ to which $x^t$ (and $z^t$) converges. Then $f(x^t)\to f^*$ as $t\to \infty$.}
Note that by the analysis above, when $t$ is sufficiently large, $f(\bar{z}^t)=f^*$ and $\bar{z}^t$ belongs to the connected component that $x^t$ and $z^t$ converge to.
We summarize the above analysis as follows.
\begin{claim}\label{claim}
{\color{black}Assume the parameters of Algorithm \ref{Alg2} are chosen to guarantee its convergence $($cf.\ Theorem~\ref{th:main}$)$.}
We have the following.
\begin{enumerate}
\item The quadratic cost function $f$ is constant on each of the connected components of $X^*$.
\item The two sequences $\{x^t\}$, $\{z^t\}$ converge to a same connected component of $X^*$.
\item Let $\displaystyle\bar{z}^t=\argmin_{\bar{z}\in X^*}=\dist({z}^t,X^*)$ and $x^*$ is any limit point of $\{x^t\}$, then $f(x^*)=f(\bar{z}^t)=f^*$, where $f^*$ is a constant, for all sufficiently large $t$.
%\item When $t$ is sufficiently large, $\bar{z}^t$ belongs to the connected component, to which $\{x^t\}$, $\{z^t\}$ converge. Moreover, $f(\bar{z}^t)=f^* $ for $t$ large enough.
\item There exists a constant $\epsilon(x^0, y^0, z^0)$, such that for any $z^t$, if $\dist(z^t, X^*)< \epsilon(x^0, y^0, z^0)$, we always have
$$f(\bar{z}^t)=f^*.$$
\end{enumerate}
\end{claim}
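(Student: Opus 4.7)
The plan is to exploit the piecewise-linear structure of the KKT conditions of a quadratic program: $X^*$ is a finite union of polyhedra, and Theorem~\ref{th:main} already delivers $\dist(x^t,X^*)\to 0$, $\dist(z^t,X^*)\to 0$ and $\|x^{t+1}-x^t\|\to 0$. First I would decompose $X^*$ by active-set combinatorics: for each pair of disjoint subsets $(S_1,S_2)\subseteq\{1,\ldots,n\}$, enforcing $x_i=\ell_i$ for $i\in S_1$, $x_i=u_i$ for $i\in S_2$, and $\mu_i=0,\nu_i=0$ off these index sets turns the KKT system into a linear system in $(x,y,\mu,\nu)$ with nonnegativity on the multipliers, so the corresponding slice of $X^*$ is a polyhedron. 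Finite-union, intersection with the bounded box $P$, and finiteness of the number of connected components then yield a strictly positive separation gap $\delta>0$ between distinct connected components of $X^*$. For item 1, pick two KKT points $x,x'$ on the same polyhedral piece with multipliers $(y,\mu,\nu)$ and $(y',\mu',\nu')$. Using $A(x'-x)=0$, $x_i=x_i'$ on $S_1\cup S_2$, and the vanishing of $\mu,\mu',\nu,\nu'$ off the active set, the KKT stationarity equations yield $\nabla f(x)^\transp(x'-x)=\nabla f(x')^\transp(x'-x)=0$. Subtracting and using $\nabla f(x')-\nabla f(x)=Q(x'-x)$ forces $(x'-x)^\transp Q(x'-x)=0$ and hence $f(x')=f(x)$; continuity propagates the constant value to the closure of each piece, and shared boundary points stitch adjacent pieces together within each connected component.

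For item 2, I would run a no-jumping argument with the gap $\delta$. Pick $T$ large enough so that $\dist(x^t,X^*)<\delta/3$ and $\|x^{t+1}-x^t\|<\delta/3$ for all $t\ge T$; such $T$ exists by Theorem~\ref{th:main}. Then for $t\ge T$ the nearest component of $X^*$ to $x^{t+1}$ must coincide with the nearest component to $x^t$, so the whole tail $\{x^t\}_{t\ge T}$ lies in a single connected component $C$. Applied to $\{z^t\}$, using that $\|z^{t+1}-z^t\|\to 0$ from the potential-descent inequality~\eqref{potential-descent}, the same reasoning confines $\{z^t\}$ to some component $C'$. To identify $C'=C$, note that $z^{t+1}-z^t=\beta(x^{t+1}-z^t)$ gives $\|x^{t+1}-z^t\|\to 0$, so eventually the distance between $C$ and $C'$ is below $\delta$, forcing $C'=C$.

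Items 3 and 4 are then direct consequences. By item 1, $f\equiv f^*$ on $C$, hence every limit point $x^*$ of $\{x^t\}$ satisfies $f(x^*)=f^*$; and for $t$ so large that $\dist(z^t,X^*)<\delta/3$, the projection $\bar z^t$ must land in $C$, because any other component is at distance at least $\delta-\delta/3>\delta/3$ from $z^t$, giving $f(\bar z^t)=f^*$. This also furnishes $\epsilon(x^0,y^0,z^0):=\delta/3$ for item 4, where the dependence on the initialization enters only through the choice of the target component $C$. The main technical obstacle is the first paragraph: pinning down carefully that $f$ is constant on each connected component of $X^*$ using only the shared active-set structure and the quadratic form of $f$. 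Once that is in hand, the separation-and-no-jumping argument is routine, and items 3 and 4 are bookkeeping.
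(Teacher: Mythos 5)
Your proposal follows essentially the same route as the paper: $X^*$ is a finite union of polyhedra, hence its connected components are positively separated, and the vanishing of the successive differences together with $\dist(x^t,X^*)\to 0$, $\dist(z^t,X^*)\to 0$ from Theorem~\ref{th:main} confines both sequences (and, via $\|x^{t+1}-z^t\|\to 0$, to the \emph{same}) connected component, after which items 3 and 4 are immediate bookkeeping. The only difference is that the paper simply cites the error-bound literature for the fact that $f$ is constant on each connected component of $X^*$, whereas you prove it directly via the shared active-set KKT argument ($(x'-x)^\transp Q(x'-x)=0$ plus stitching across adjacent polyhedral pieces); that derivation is correct and makes the claim self-contained.
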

Denote
\begin{eqnarray}
% \nonumber % Remove numbering (before each equation)
\Phi_p^t(z)&=&K(x^t, z; y^t)-d(y^t, z)\nonumber \\
\Phi_d^t(z)&=&M(z)-d(y^t, z) \nonumber\\
\Phi_{prx}^t(z)&=&M(z)-f^*\label{delta-prx}\\
\bar{z}^t&=&\argmin_{x\in X^*}\|x-z^t\|.\nonumber\\
\Phi^t(z)&=&\Phi^t_{p}(z)+\Phi^t_d(z)+\Phi^t_{prx}(z).\nonumber
\end{eqnarray}
\noindent{\bf Remark.} Notice that $\Phi^t(z^t)$ decreases monotonically due to \eqref{potential-descent} and converges to $0$ by Claim \ref{claim}, we have $\Phi^t(z^t)=\phi^t-{f^*}$ and $\Phi^t(z^t)\ge 0$ for any $t\ge 0$.
Also notice that
\begin{equation}\label{equivalent}
\phi^t-\phi^{t+1}=\Phi^t(z^t)-\Phi^{t+1}(z^{t+1}).
\end{equation}
To prove linear convergence, we make use of some ``cost-to-go'' estimates from \cite{Luo-Hong12}.
\begin{lemma}\label{cost-to-go}
%Let $f(x)$ be a quadratic function given by \eqref{quadratic}. Then
There exist constants $\tau_1, \tau_2, \tau_3>0$ such that
%\begin{enumerate}
%\item For the primal gap, we have
\begin{eqnarray}\label{primal-cost-to-go}
K(x^{t+1}, z^t; y^{t+1})-d(y^{t+1}, z^t)&\le& \tau_1\|x^t-x^{t+1}\|^2,\\
%\end{equation}
%\item For the dual gap, we have
\label{dual-cost-to-go}
M(z^t)-d(y^{t+1}, z^t)&\le& \tau_2\|Ax(y^{t+1}, z^t)-b\|^2,\\
%\end{equation}
%\item For the proximal gap, we have
\label{proximal-cost-to-go}
M(z^t)-f^*&\le& \tau_3\|z^t-x^*(z^t)\|^2.
\end{eqnarray}
\end{lemma}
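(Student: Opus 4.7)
The three inequalities are ``cost-to-go'' bounds of the same flavour: each relates a function-value gap to the square of a suitable residual, and each is obtained by combining a smoothness/strong-convexity estimate with one of the error bounds collected in Lemma~\ref{error bound}. My plan is to handle the three parts in order.

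For the primal bound \eqref{primal-cost-to-go}, the key observation is that $K(\cdot,z^t;y^{t+1})$ is $L_K$-smooth and $\gamma_K$-strongly convex in $x$. Using the descent lemma at $x^t$ to upper bound $K(x^{t+1},z^t;y^{t+1})$ and strong convexity at $x^t$ to lower bound $K(x(y^{t+1},z^t),z^t;y^{t+1})$, then subtracting, yields
\[
K(x^{t+1},z^t;y^{t+1}) - d(y^{t+1},z^t) \le \nabla_x K(x^t,z^t;y^{t+1})^\top (x^{t+1}-x(y^{t+1},z^t)) + \tfrac{L_K}{2}\|x^{t+1}-x^t\|^2.
\]
I would then invoke the variational inequality characterising the projected-gradient update $x^{t+1}=[x^t-c\,\nabla_x K(x^t,z^t;y^{t+1})]_+$, tested against $x=x(y^{t+1},z^t)\in P$, to replace the inner product by $\tfrac{1}{c}\|x^t-x^{t+1}\|\cdot\|x^{t+1}-x(y^{t+1},z^t)\|$, and finish by applying the primal error bound \eqref{eb2} to absorb $\|x^{t+1}-x(y^{t+1},z^t)\|$ into $\tfrac{1}{\sigma_2}\|x^{t+1}-x^t\|$. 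This delivers $\tau_1=\tfrac{1}{c\sigma_2}+\tfrac{L_K}{2}$.

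For the dual bound \eqref{dual-cost-to-go}, strong duality gives $M(z^t)=\max_y d(y,z^t)$, so $M(z^t)-d(y^{t+1},z^t)=d(y^*(z^t),z^t)-d(y^{t+1},z^t)$ for any $y^*(z^t)\in Y^*(z^t)$. The dual function $d(\cdot,z^t)$ is concave and by Danskin's theorem $\nabla_y d(y,z^t)=Ax(y,z^t)-b$. Concavity and Cauchy--Schwarz give
\[
M(z^t)-d(y^{t+1},z^t)\le \|Ax(y^{t+1},z^t)-b\|\cdot \|y^*(z^t)-y^{t+1}\|,
\]
and taking $y^*(z^t)$ to be the projection of $y^{t+1}$ onto $Y^*(z^t)$ replaces the last factor by $\dist(y^{t+1},Y^*(z^t))$. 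Invoking the dual error bound \eqref{eb5}---which applies for sufficiently large $t$ since $\|Ax(y^{t+1},z^t)-b\|\to 0$ and $\dist(z^t,X^*)\to 0$ by Theorem~\ref{th:main}---then yields $\tau_2=\sigma_5$.

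The main obstacle is the proximal bound \eqref{proximal-cost-to-go}, which must exploit the quadratic structure of $f$. Let $\bar z^t$ denote the projection of $z^t$ onto $X^*$. By Claim~\ref{claim}, for large $t$ the sequence $\{z^t\}$ is confined to a single connected component of $X^*$ on which $f\equiv f^*$, so $f(\bar z^t)=f^*$ and $\bar z^t$ is feasible for the problem defining $M(z^t)$. Feasibility of $\bar z^t$ in the proximal problem at $z^t$ then gives immediately
\[
M(z^t)-f^* \le f(\bar z^t)+\tfrac{p}{2}\|\bar z^t-z^t\|^2 - f^* = \tfrac{p}{2}\dist(z^t,X^*)^2.
\]
The crux is to control $\dist(z^t,X^*)$ by $\|z^t-x^*(z^t)\|$. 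The KKT system for the strongly convex proximal QP shows that $x^*(z^t)$ satisfies the KKT conditions of the original QP up to a residual of size $p\|z^t-x^*(z^t)\|$ in the stationarity equation; applying Hoffman's bound (Proposition~\ref{hoffman}) to this polyhedral KKT system yields $\dist(x^*(z^t),X^*)\le C\,p\|z^t-x^*(z^t)\|$, and a single triangle inequality $\dist(z^t,X^*)\le\|z^t-x^*(z^t)\|+\dist(x^*(z^t),X^*)$ closes the gap and produces $\tau_3$. This last ingredient is the only place where the quadratic structure of $f$ is essential, and is the sharpest point of the argument; it follows the error-bound template developed in \cite{Luo-Hong12}.
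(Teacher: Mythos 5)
Your proposal is correct, but it takes a more self-contained route than the paper. For \eqref{primal-cost-to-go} and \eqref{dual-cost-to-go} the paper simply invokes Lemma~3.1 of \cite{Luo-Hong12} (with $L$, $d$, $p^*$ replaced by $K$, $d(\cdot,z)$, $M(z)$) together with the error bounds \eqref{eb1} and \eqref{eb5}; your direct arguments --- descent lemma plus the variational inequality of the projection step plus \eqref{eb2} for the primal gap, and concavity of $d(\cdot,z^t)$ with Danskin's theorem plus \eqref{eb5} for the dual gap --- reconstruct exactly what that cited lemma provides, with explicit constants $\tau_1=\tfrac{1}{c\sigma_2}+\tfrac{L_K}{2}$ and $\tau_2=\sigma_5$. (Your caveat that \eqref{eb5} only applies once $\|Ax(y^{t+1},z^t)-b\|$ and $\dist(z^t,X^*)$ fall below $\Delta$ is the same restriction the paper operates under, since the lemma is only deployed for $t\ge t_{\max}$.) For \eqref{proximal-cost-to-go} your route is genuinely different and in fact cleaner: the paper bounds $f(x^*(z^t))-f(\bar z^t)$ through the approximate-gradient-projection cost-to-go machinery of \cite{Luo-Hong12}, whereas you use only the minimality defining $M(z^t)$ and the feasibility of $\bar z^t$ to get $M(z^t)-f^*\le\tfrac{p}{2}\dist(z^t,X^*)^2$ in one line, then convert $\dist(z^t,X^*)$ to $\|z^t-x^*(z^t)\|$.

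The one soft spot is your claim that Hoffman's bound applied to ``this polyhedral KKT system'' yields $\dist(x^*(z^t),X^*)\le Cp\|z^t-x^*(z^t)\|$. The KKT system of the original QP includes the complementarity conditions, so the solution set is a \emph{union} of polyhedra (one per active-set pattern), not a single polyhedron, and Proposition~\ref{hoffman} does not apply to it directly; one must first identify the correct active set for large $t$ (as is done in the proof of \eqref{eb5}) or invoke the Luo--Tseng local error bound for affine variational inequalities. The paper sidesteps this by quoting exactly the needed inequality $\dist(z,X^*)\le\bar\tau\|z-x^*(z)\|$ as Lemma~\ref{proximal error bound}, citing Theorem~2.1 of \cite{eb-survey}. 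If you replace your Hoffman sketch with a citation of that lemma, your argument closes with $\tau_3=\tfrac{p\bar\tau^2}{2}$, which is a tighter and more transparent constant than the paper's $(\bar\tau^2+1)\tau'+\tfrac{p}{2}$.
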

\begin{proof}
The proof of {\color{black}\eqref{primal-cost-to-go} and \eqref{dual-cost-to-go} is simply to combine Lemma 3.1 of \cite{Luo-Hong12} with \eqref{eb1}, \eqref{eb5} in Lemma~\ref{error bound}. Specifically, we only need to replace $L(x; y)$, $d(y)$, $p^*$ of \cite[Lemma 3.1]{Luo-Hong12} by $K(x, z;y)$, $d(y, z)$, $M(z)$ respectively with $z$ fixed, since the error estimates of \cite{Luo-Hong12} are independent of the linear term in $K(x,z;y)$.}
To prove the estimate \eqref{proximal-cost-to-go}, we first notice that $f$ has a Lipschitz continuous gradient and that the classic proximal algorithm belongs to the class of approximate gradient projection algorithm (see \cite[Theorem 3.3]{eb-survey}), namely, we have
$$x^*(z^t)=[z^t-\nabla M(z^t)+\Theta(t)]_+,$$
where $\Theta(t)$ satisfies $\|\Theta(t)\|\le \eta\|x^*(z^t)-z^t\|$ for some $\eta>0$. Note that the above are similar to the inequalities (3.5) and (3.7) in the proof of the second part of Lemma 3.1 of \cite{Luo-Hong12}.
Therefore, similar to \cite[Lemma~ 3.1]{Luo-Hong12}, there exists a constant $\tau'$ such that
\begin{equation*}
f(x^*(z^t))-f(\bar{z}^t)
\le \tau'(\|z^t-x^*(z^{t})\|^2+\|z^t-\bar{z}^t\|^2),
\end{equation*}
where $\bar{z}^t$ is the projection of $z^t$ to stationary solution set of problem \eqref{P}. According to Claim \ref{claim}, $\bar{z}^t$ is in the connected component that $\{x^t\}$, $\{z^t\}$ converge to and $f(\bar{z}^t)=f^*$. %We omit the details.
From Theorem 2.1 in \cite{eb-survey}, there exists a constant $\bar{\tau}>0$, such that
$$\|z^t-\bar{z}^t\|\le \bar{\tau}\|z^t-x^*(z^t)\|.$$
Combining the above two inequalities and using the definition of $M(z)$, we have
\begin{equation*}
M(z^t)-f^*=f(x^*(z^t))+\frac{p}{2}\|x^*(z^t)-z^t\|^2-f(\bar{z}^t)%\nonumber\\
\le \left((\bar{\tau}^2+1)\tau'+\frac{p}{2}\right)\|z^t-x^*(z^t)\|^2. %\nonumber\\
\end{equation*}
This completes the proof of \eqref{proximal-cost-to-go} by setting $\tau_3=\left((\bar{\tau}^2+1)\tau'+\frac{p}{2}\right)$.
\end{proof}
\bigskip
The following is the error bound for proximal algorithm, which can be seen in \cite{eb-survey}.
\begin{lemma}\label{proximal error bound}
There exist a constant $\bar{\tau}$ such that
$$\dist(z, x^*)\leq \bar{\tau}\|z-x^*(z)\|.$$
\end{lemma}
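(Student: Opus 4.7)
The plan is to deduce this statement directly from the classical Luo--Tseng error bound for linearly constrained quadratic programs (Theorem~2.1 in \cite{eb-survey}), invoked for the auxiliary problem
\[
\min_{x\in P,\ Ax=b}\ f(x),\qquad f(x)=\tfrac12 x^\transp Qx+r^\transp x.
\]
The key observation is that the proximal map $x^*(z)$, as defined in \eqref{x}, is exactly one proximal-point iteration for this linearly constrained QP starting from $z$, and the quantity $\|z-x^*(z)\|$ plays the role of the natural residual of the proximal--gradient mapping.

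First I would identify the ``proximal residual'' with the natural residual that appears in the error-bound literature. Because $x^*(z)$ is the unique minimizer of the strongly convex function $f(x)+\frac{p}{2}\|x-z\|^2$ on the polyhedron $\{x\in P,\ Ax=b\}$, the optimality conditions imply
\[
x^*(z)=\Pi_{\{x\in P,\ Ax=b\}}\!\left(z-\tfrac{1}{p}\nabla f(x^*(z))\right),
\]
so $z-x^*(z)$ plays the same role as the displacement term $\|x-[x-\nabla f(x)]_+\|$ used in \cite{eb-survey} for a projected-gradient residual. This lets me identify the present ``proximal residual'' with one of the equivalent residual functions listed in the survey.

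Second, I would invoke Theorem~2.1 of \cite{eb-survey}, which states that for any linearly constrained QP, the distance from $z$ to the stationary set is upper-bounded by a constant multiple of the natural residual, on any bounded subset of the feasible region. Since $P$ is compact and $z\in P$ throughout the algorithm, this gives
\[
\dist(z,X^*)\le \bar\tau\,\|z-x^*(z)\|
\]
for a uniform constant $\bar\tau>0$.

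The only subtle point—and the main obstacle—is that the present problem is \emph{nonconvex} (because $Q$ need not be positive semidefinite), whereas the classical Luo--Tseng error bound is most commonly stated for convex QPs. However, the proof of Theorem~2.1 of \cite{eb-survey} relies only on the polyhedral structure of the stationary set $X^*$ and on the piecewise-affine nature of the KKT map, both of which persist for nonconvex QPs over a polyhedron. So the error bound continues to hold, and the lemma follows by direct quotation. No additional calculation is required.
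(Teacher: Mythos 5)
Your proposal is correct and follows essentially the same route as the paper, which states this lemma without proof by direct citation to \cite{eb-survey}: the paper likewise treats $x^*(z)$ as one step of an (approximate) gradient projection map (via Theorem~3.3 of the survey) and then invokes Theorem~2.1 there for the Luo--Tseng error bound, exactly as you do. Your added remarks---the fixed-point characterization $x^*(z)=\Pi_{\{x\in P,\,Ax=b\}}\bigl(z-\tfrac{1}{p}\nabla f(x^*(z))\bigr)$ and the observation that the bound survives nonconvexity of $Q$ because it rests only on the polyhedral structure of the KKT system---are consistent with how the paper uses the result and simply make the citation more explicit.
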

\begin{lemma}\label{proximal residual}
For any $t\ge 0$, we have
$$\|z^t-x^*(z^t)\|^2\le B_1(\beta)\|z^t-z^{t+1}\|^2+B_2(\beta)\|x^t-x^{t+1}\|^2+B_3(\beta)\|Ax(y^{t+1}, z^t)-b\|^2,$$
where
$$B_1(\beta)=3(\frac{1}{\beta^2}+\frac{1}{6D\beta^2}),\quad B_2(\beta)=3(\frac{1}{\sigma_2^2}+\frac{1}{8cpD\beta}) \ \mbox{ and }\ B_3(\beta)=\frac{3\alpha}{2pD\beta}.
$$
\end{lemma}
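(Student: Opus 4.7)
My plan is to derive the bound directly from the triangle inequality together with two pieces of machinery already developed in the paper: the primal error bound \eqref{eb2} and the consolidated inequality \eqref{dual bound for x} that was established case-by-case in the proof of Theorem~\ref{th:main}. The coefficients $B_1(\beta),B_2(\beta),B_3(\beta)$ in the statement line up precisely with what these ingredients yield, which gives me confidence that this is the intended route.

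First I would split the distance $\|z^t-x^*(z^t)\|$ via the triangle inequality through two intermediate points, namely $x^{t+1}$ and $x(y^{t+1},z^t)$:
\[
\|z^t-x^*(z^t)\|\le \|z^t-x^{t+1}\|+\|x^{t+1}-x(y^{t+1},z^t)\|+\|x(y^{t+1},z^t)-x^*(z^t)\|.
\]
Squaring and using $(a+b+c)^2\le 3(a^2+b^2+c^2)$ reduces the proof to estimating each of the three pieces. The first piece is immediate from the $z$-update rule $z^{t+1}=z^t+\beta(x^{t+1}-z^t)$, which gives $\|z^t-x^{t+1}\|^2=\beta^{-2}\|z^t-z^{t+1}\|^2$. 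The second piece is exactly what the primal error bound \eqref{eb2} in Lemma~\ref{error bound} is designed for, yielding $\|x^{t+1}-x(y^{t+1},z^t)\|^2\le \sigma_2^{-2}\|x^t-x^{t+1}\|^2$.

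The third piece is the one where the content lies. Rearranging inequality \eqref{dual bound for x}, which was shown to hold in every case during the proof of Theorem~\ref{th:main}, gives
\[
\|x(y^{t+1},z^t)-x^*(z^t)\|^2\le \frac{1}{8cpD\beta}\|x^t-x^{t+1}\|^2+\frac{\alpha}{2pD\beta}\|Ax(y^{t+1},z^t)-b\|^2+\frac{1}{6D\beta^2}\|z^t-z^{t+1}\|^2.
\]
Plugging these three estimates into the squared triangle inequality and collecting terms gives the advertised bound with $B_1(\beta)=3(\beta^{-2}+(6D\beta^2)^{-1})$, $B_2(\beta)=3(\sigma_2^{-2}+(8cpD\beta)^{-1})$, and $B_3(\beta)=3\alpha/(2pD\beta)$.

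Since every step is an application of results already in hand, I do not anticipate any genuine obstacle. The only thing to be careful about is that \eqref{dual bound for x} was derived through a case analysis inside the proof of Theorem~\ref{th:main} and is valid for all $t\ge 0$ under the stepsize choices fixed there; citing it here implicitly assumes the same parameter regime, which is consistent with the rest of the convergence analysis.
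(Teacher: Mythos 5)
Your proposal is correct and follows essentially the same route as the paper: the same three-term decomposition through $x^{t+1}$ and $x(y^{t+1},z^t)$, the bound $(a+b+c)^2\le 3(a^2+b^2+c^2)$, the error bound \eqref{eb2} for the middle term, and inequality \eqref{dual bound for x} (divided by $pD\beta$) for the last term. The resulting constants $B_1(\beta), B_2(\beta), B_3(\beta)$ match the paper's exactly.
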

\begin{proof}
Notice that
\begin{eqnarray}\label{decomposition}
&&\|z^t-x^*(z^t)\|^2\nonumber\\
&=&\|(z^t-x^{t+1})+(x^{t+1}-x(y^{t+1}, z^t))+(x(y^{t+1}, z^t)-x^*(z^t))\|^2\\
&\stackrel{ \mbox{\scriptsize(i)}}=&\|(z^t-z^{t+1})/\beta+(x^{t+1}-x(y^{t+1}, z^t))+(x(y^{t+1}, z^t)-x^*(z^t))\|^2\\
&\stackrel{ \mbox{\scriptsize(ii)}}\le&3(\|z^t-z^{t+1}\|^2/\beta^2+\|x^{t+1}-x(y^{t+1}, z^t)\|^2+\|x(y^{t+1}, z^t)-x^*(z^t)\|^2)\\
&\stackrel{ \mbox{\scriptsize(iii)}}\le&3(\|z^t-z^{t+1}\|/\beta^2+\|x^t-x^{t+1}\|^2/\sigma_2^2+\|x(y^{t+1}, z^t)-x^*(z^t)\|^2),
\end{eqnarray}
where (i) is  because $z^{t+1}=z^t+\beta(x^{t+1}-z^t)$, (ii) is due to the convexity of the square norm and (iii) is due to \eqref{eb2}.
On the other hand, by \eqref{dual bound for x}, we have
\begin{equation}\label{dual bound for x 2}
\|x(y^{t+1}, z^t)-x^*(z^t)\|^2\le \frac{1}{pD\beta}(\frac{1}{8c}\|x^t-x^{t+1}\|^2+\frac{\alpha}{2}\|Ax(y^{t+1}, z^t)-b\|^2+\frac{p}{6\beta}\|z^t-z^{t+1}\|^2).
\end{equation}
Finally, we substitute \eqref{dual bound for x 2} to \eqref{decomposition} and get the desired result.
\end{proof}
\begin{lemma}\label{descent}
For $t>0$ and $\beta$ sufficiently small, we have
$$\phi^t-\phi^{t+1}\ge C_1(\beta)\|x^t-x^{t+1}\|^2+C_2(\beta)\|Ax(y^{t+1}, z^t)-b\|^2+C_3(\beta)\|z^t-x^*(z^t)\|^2+C_4(\beta)\|z^t-z^{t+1}\|^2,$$
where $C_i(\beta)>0, i=1, 2, 3, 4$ are constants depending on $\beta$.
\end{lemma}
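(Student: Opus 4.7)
The plan is to combine the descent inequality~\eqref{potential-descent} already proved in the course of Theorem~\ref{th:main} with the proximal residual estimate of Lemma~\ref{proximal residual}. Recall that~\eqref{potential-descent} gives, for $\beta$ sufficiently small,
\[
\phi^t-\phi^{t+1}\ge a_1\|x^t-x^{t+1}\|^2+a_2\|Ax(y^{t+1},z^t)-b\|^2+a_3\|z^t-z^{t+1}\|^2,
\]
with $a_1=\tfrac{1}{8c}$, $a_2=\tfrac{\alpha}{2}$, $a_3=\tfrac{p}{6\beta}$, all strictly positive. This already yields positive coefficients $C_1,C_2,C_4$ on three of the desired four terms; only the new quantity $\|z^t-x^*(z^t)\|^2$ needs to be produced.

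To produce it, I would read Lemma~\ref{proximal residual} in reverse: it tells us that
\[
\|z^t-x^*(z^t)\|^2\le B_1(\beta)\|z^t-z^{t+1}\|^2+B_2(\beta)\|x^t-x^{t+1}\|^2+B_3(\beta)\|Ax(y^{t+1},z^t)-b\|^2,
\]
so multiplying by any scalar $C_3(\beta)>0$ and subtracting gives
\[
C_3(\beta)\|z^t-x^*(z^t)\|^2-C_3(\beta)\bigl(B_2\|x^t-x^{t+1}\|^2+B_3\|Ax(y^{t+1},z^t)-b\|^2+B_1\|z^t-z^{t+1}\|^2\bigr)\le 0.
\]
Adding this (nonpositive) expression to the right-hand side of~\eqref{potential-descent} yields
\[
\phi^t-\phi^{t+1}\ge (a_1-C_3B_2)\|x^t-x^{t+1}\|^2+(a_2-C_3B_3)\|Ax(y^{t+1},z^t)-b\|^2+(a_3-C_3B_1)\|z^t-z^{t+1}\|^2+C_3\|z^t-x^*(z^t)\|^2.
\]
It then suffices to choose
\[
C_3(\beta)\;=\;\tfrac{1}{2}\min\!\left\{\tfrac{a_1}{B_2(\beta)},\ \tfrac{a_2}{B_3(\beta)},\ \tfrac{a_3}{B_1(\beta)}\right\}>0,
\]
and define $C_1(\beta)=a_1-C_3B_2\ge a_1/2$, $C_2(\beta)=a_2-C_3B_3\ge a_2/2$, $C_4(\beta)=a_3-C_3B_1\ge a_3/2$, all strictly positive. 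This produces the four positive coefficients required.

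The only subtlety I anticipate is bookkeeping in $\beta$: since $B_1(\beta)$ and $B_2(\beta)$ blow up like $1/\beta^2$ and $1/\beta$ respectively while $a_3$ grows only like $1/\beta$, the chosen $C_3(\beta)$ will be of order $\beta^{2}$ or smaller, and hence $C_3(\beta)\to 0$ as $\beta\downarrow 0$. This is acceptable for the lemma as stated (which only asserts positivity for fixed $\beta$ small enough), but it is worth noting for the subsequent linear convergence argument where the rate will inherit a $\beta$-dependence from $C_3(\beta)$. No additional error bound or geometric fact is needed beyond~\eqref{potential-descent} and Lemma~\ref{proximal residual}, so the proof reduces to this simple convex combination argument.
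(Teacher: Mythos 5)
Your proposal is correct and follows essentially the same route as the paper: the paper likewise combines \eqref{potential-descent} with Lemma~\ref{proximal residual}, defining a constant $\bar{B}$ as the maximum of the ratios of the $B_i(\beta)$ to (half of) the corresponding coefficients $\tfrac{1}{8c},\tfrac{\alpha}{2},\tfrac{p}{6\beta}$ and taking $C_3(\beta)=1/(2\bar{B})$, which is exactly your choice of $C_3(\beta)$ up to notation. If anything, your explicit pairing of each $B_i(\beta)$ with the coefficient of the matching squared residual is cleaner than the paper's displayed definition of $\bar{B}$, which transposes the indices.
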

\begin{proof}
By Lemma \ref{proximal residual}, we have
\begin{equation}
\|z^t-x^*(z^t)\|^2\le B_1(\beta)\|z^t-z^{t+1}\|^2+B_2(\beta)\|x^t-x^{t+1}\|^2+B_3(\beta)\|Ax(y^{t+1}, z^t)-b\|^2.
\end{equation}
Let $\bar{B}=\max\left\{\frac{B_1(\beta)}{1/8c}, \frac{B_2(\beta)}{\alpha/2}, \frac{B_3(\beta)}{p/6\beta}\right\}$. We then have
\begin{eqnarray}
\|z^t-x^*(z^t)\|^2/(2\bar{B})&\le& \frac{1}{16c}\|x^t-x^{t+1}\|^2+\frac{\alpha}{4}\|Ax(y^{t+1}, z^t)-b\|^2+\frac{p}{12\beta}\|z^t-z^{t+1}\|^2\nonumber\\
&\le&\frac{1}{2}(\phi^t-\phi^{t+1}).\label{barB2}
\end{eqnarray}
Hence, we have
\begin{eqnarray*}
\phi^t-\phi^{t+1}
&\ge&\frac{1}{8c}\|x^t-x^{t+1}\|^2+\frac{\alpha}{2}\|Ax(y^{t+1}, z^t)-b\|^2+\frac{p}{6\beta}\|z^t-z^{t+1}\|^2\\
&\ge&\frac{1}{16c}\|x^t-x^{t+1}\|^2+\frac{\alpha}{4}\|Ax(y^{t+1}, z^t)-b\|^2+\frac{p}{12\beta}\|z^t-z^{t+1}\|^2+\|z^t-x^*(z^t)\|^2/(2\bar{B}).
\end{eqnarray*}
This completes the proof.
\end{proof}
Now we can prove the linear convergence of Algorithm \ref{Alg2} for quadratic programming.
% red
\begin{lemma}\label{one-step-linear-convergence}
If $f(\bar{z}^t)=f^*$ and $\max\{\dist(z^t, X^*), \|Ax(y^{t+1}, z^t)-b\|\}<\Delta$, we have
$$\Phi^t(z^t)-\Phi^{t+1}(z^{t+1})\ge \kappa\Phi^{t+1}(z^{t+1}),$$
where $\Delta$ is defined in Lemma \ref{error bound} and $\kappa>0$ is a constant. Hence, $\Phi^{t+1}(z^{t+1})<\frac{1}{1+\kappa}\Phi^t(z^t)$.
\end{lemma}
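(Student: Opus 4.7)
The plan is to sandwich the potential $\Phi^{t+1}(z^{t+1})$ between the descent inequality of Lemma~\ref{descent} and a matching upper bound in terms of the same four squared residuals. First, Lemma~\ref{descent} under the stated hypotheses gives
\begin{equation*}
\Phi^t(z^t) - \Phi^{t+1}(z^{t+1}) \;\ge\; C_1\|x^t-x^{t+1}\|^2 + C_2\|Ax(y^{t+1},z^t)-b\|^2 + C_3\|z^t-x^*(z^t)\|^2 + C_4\|z^t-z^{t+1}\|^2.
\end{equation*}
The goal is then to establish an upper bound of the form $\Phi^{t+1}(z^{t+1}) \le D\bigl[\|x^t-x^{t+1}\|^2 + \|Ax(y^{t+1},z^t)-b\|^2 + \|z^t-x^*(z^t)\|^2 + \|z^t-z^{t+1}\|^2\bigr]$ for some constant $D>0$; dividing the two inequalities then delivers the claim with $\kappa = \min\{C_1,C_2,C_3,C_4\}/D$.

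Constructing this upper bound is the bulk of the work, and I would handle the three pieces $\Phi^{t+1}_p(z^{t+1})$, $\Phi^{t+1}_d(z^{t+1})$, $\Phi^{t+1}_{prx}(z^{t+1})$ separately. For the primal part, I would use the $L_K$-Lipschitz gradient of $K(\cdot,z^{t+1};y^{t+1})$ to dominate it by $\tfrac{L_K}{2}\|x^{t+1}-x(y^{t+1},z^{t+1})\|^2$, split this distance via $x(y^{t+1},z^t)$, and apply the error bounds \eqref{eb2} and \eqref{eb3} to land in $O(\|x^t-x^{t+1}\|^2+\|z^t-z^{t+1}\|^2)$. For the dual part, I would mimic the proof of inequality \eqref{dual-cost-to-go} of Lemma~\ref{cost-to-go} but at $z^{t+1}$: via the dual error bound \eqref{eb5} this yields a bound by $\|Ax(y^{t+1},z^{t+1})-b\|^2$, which the triangle inequality together with \eqref{eb3} convert into $O(\|Ax(y^{t+1},z^t)-b\|^2+\|z^t-z^{t+1}\|^2)$. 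For the proximal part, I would invoke inequality \eqref{proximal-cost-to-go} of Lemma~\ref{cost-to-go} at time $t+1$ and decompose $z^{t+1}-x^*(z^{t+1})$ through $z^t$ and $x^*(z^t)$, applying \eqref{eb4}, to obtain $O(\|z^t-z^{t+1}\|^2+\|z^t-x^*(z^t)\|^2)$. Summing gives the required bound on $\Phi^{t+1}(z^{t+1})$.

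The main obstacle will be justifying the applicability of the ``local'' ingredients at $z^{t+1}$ rather than at $z^t$: the dual error bound \eqref{eb5} requires $\dist(z^{t+1},X^*)\le\Delta$ and $\|Ax(y^{t+1},z^{t+1})-b\|\le\Delta$, while the proximal cost-to-go \eqref{proximal-cost-to-go} implicitly requires $f(\bar z^{t+1}) = f^*$. Both can be propagated from the hypotheses at $z^t$ using the recursion $z^{t+1}=z^t+\beta(x^{t+1}-z^t)$, the boundedness of $\|x^{t+1}-z^t\|$ by the diameter of $P$, the error bound \eqref{eb3}, and the smallness of $\beta$. For the $f^*$ issue, Claim~\ref{claim}(4) guarantees that once $\dist(z^t,X^*)$ is sufficiently small, $\bar z^{t+1}$ lies in the same connected component of $X^*$ as $\bar z^t$, hence $f(\bar z^{t+1}) = f^*$. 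In practice this may force a slightly smaller threshold than the nominal $\Delta$ in the hypotheses, but since $\beta$ is a fixed small parameter this adjustment is harmless and only affects the constant $\kappa$.
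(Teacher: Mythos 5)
Your proposal is sound and reaches the same conclusion, but it builds the upper bound on $\Phi^{t+1}(z^{t+1})$ by a different decomposition than the paper. The paper never evaluates anything at $z^{t+1}$ directly: it applies the cost-to-go estimates of Lemma~\ref{cost-to-go} at $z^t$ to bound the intermediate quantity $\Phi^{t+1}(z^t)=\Phi_p^{t+1}(z^t)+\Phi_d^{t+1}(z^t)+\Phi_{prx}^{t+1}(z^t)$, and then controls the increment $\Phi^{t+1}(z^{t+1})-\Phi^{t+1}(z^t)$ by recycling the one-step inequalities \eqref{zprimal}, \eqref{zdual} and \eqref{eq:prox-descent} already derived in the descent lemmas, together with the error bounds; the final division against Lemma~\ref{descent} (via the identity \eqref{equivalent}) produces $\kappa=\min_{i:D_i>0}C_i/D_i$ exactly as you propose. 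The advantage of the paper's route is that every ``local'' ingredient --- the dual error bound \eqref{eb5} and the requirement $f(\bar z^t)=f^*$ --- is invoked only at $(y^{t+1},z^t)$, so the stated hypotheses $\max\{\dist(z^t,X^*),\|Ax(y^{t+1},z^t)-b\|\}<\Delta$ suffice verbatim and no conditions at $z^{t+1}$ ever need to be checked. Your route is more modular (each piece of $\Phi^{t+1}(z^{t+1})$ is bounded where it lives and the residuals are transported back to time $t$ by triangle inequalities and \eqref{eb2}--\eqref{eb3}), but it pays for this by needing \eqref{eb5} and $f(\bar z^{t+1})=f^*$ at $z^{t+1}$, which --- as you correctly flag --- forces you to shrink the threshold below the nominal $\Delta$ by an amount depending on $\beta M$; you therefore prove a marginally restated lemma. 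Since Theorem~\ref{linear-convergence} only invokes the lemma for all sufficiently large $t$, where all residuals tend to zero, this restatement is indeed harmless downstream, so I regard your argument as a valid alternative rather than a gap; just be aware that the cleaner bookkeeping of staying at $z^t$ is precisely what lets the paper keep the hypotheses as stated.
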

% redend

\begin{proof}
We need to relate the gap $\Phi^t(z^t)=\Phi_p^t(z^t)+\Phi_d^t(z^t)+\Phi_{prx}^t(z^t)$ to the estimates in Lemma~\ref{cost-to-go}.
Then using Lemmas~\ref{primal}-\ref{proximal-descent} and Lemma \ref{cost-to-go}, we have
\begin{eqnarray}
\Phi_p^{t+1}(z^t)&\le&\tau_1\|x^t-x^{t+1}\|^2,\label{eq1}\\
\Phi_d^{t+1}(z^t)&\le&\tau_2\|Ax(y^{t+1}, z^t)-b\|^2,\label{eq2}\\
\Phi_{prx}^{t+1}(z^t)&\le&\tau_3\|z^t-x^*(z^t)\|^2.\label{eq3}
\end{eqnarray}
Recall the inequality~\eqref{zprimal} in the proof of Lemma~\ref{primal}
$$K(x^{t+1}, z^{t+1};y^{t+1})-K(x^{t+1}, z^t; y^{t+1})\le -\frac{p}{2\beta}\|z^t-z^{t+1}\|^2,$$
and the inequality~\eqref{zdual}
$$d(y^{t+1}, z^t)-d(y^{t+1}, z^{t+1})\le -\frac{p}{2}(z^{t+1}-z^t)^T(z^{t+1}+z^t-2x(y^{t+1}, z^{t+1})).$$
Then for ${\Phi}_p^{t+1}(z^t)$, we have
\begin{eqnarray*}
\Phi_p^{t+1}(z^{t+1})-{\Phi}_p^{t+1}(z^t)&=&( K(x^{t+1}, z^{t+1};y^{t+1})-K(x^{t+1}, z^t; y^{t+1}))+( d(y^{t+1}, z^t)-d(y^{t+1}, z^{t+1}))\\
&\le& -\frac{p}{2\beta}\|z^t-z^{t+1}\|^2-\frac{p}{2}(z^{t+1}-z^t)^T(z^{t+1}+z^t-2x(y^{t+1}, z^{t+1})).
\end{eqnarray*}
For ${\Phi}_d^{t+1}(z^t)$, we have
\begin{eqnarray*}
\Phi_d^{t+1}(z^{t+1})-{\Phi}_d^{t+1}(z^t)&=&d(y^{t+1}, z^t)-d(y^{t+1}, z^{t+1})+M(z^{t+1})-M(z^t)\\
&\le& -\frac{p}{2}(z^{t+1}-z^t)^T(z^{t+1}+z^t-2x(y^{t+1}, z^{t+1}))+M(z^{t+1})-M(z^t).
%\\
%&& p(z^{t+1}-z^t)^T(z^t-x^*(z^t))+\frac{p}{2{\sigma}_4}\|z^t-z^{t+1}\|^2,
\end{eqnarray*}
%where the last step follows from \eqref{eq:prox-descent}.
Also, it follows from the definition \eqref{delta-prx} that
\[
\Phi_{prx}^{t+1}(z^{t+1})=\Phi_{prx}^t(z^t)+M(z^{t+1})-M(z^t).
\]
Combining the above three inequalities, we have
\begin{eqnarray*}
%&&\!\!\!\!\!\!
\Phi_p^{t+1}(z^{t+1})\!\!&+&\!\!\Phi_d^{t+1}(z^{t+1})+\Phi_{prx}^{t+1}(z^{t+1})\\
&\le& \Phi_p^{t+1}(z^t)+{\Phi}_d^{t+1}(z^t)+{\Phi}_{prx}^{t+1}(z^t)\\
&&-\frac{p}{2\beta}\|z^t-z^{t+1}\|^2-\frac{p}{2}(z^{t+1}-z^t)^T(z^{t+1}+z^t-2x(y^{t+1}, z^{t+1}))\\
&&-\frac{p}{2}(z^{t+1}-z^t)^T(z^{t+1}+z^t-2x(y^{t+1}, z^{t+1}))+2\left(M(z^{t+1})-M(z^t)\right)\\
&=& {\Phi}_p^{t+1}(z^t)+{\Phi}_d^{t+1}(z^t)+{\Phi}_{prx}^{t+1}(z^t)\\
&&-\frac{p}{2\beta}\|z^t-z^{t+1}\|^2-p(z^{t+1}-z^t)^T(z^{t+1}+z^t-2x(y^{t+1}, z^{t+1}))\\
&&+2p(z^{t+1}-z^t)^T(z^t-x^*(z^t))+\frac{p}{{\sigma}_4}\|z^t-z^{t+1}\|^2\\
&=& \Phi_p^{t+1}(z^t)+{\Phi}_d^{t+1}(z^t)+{\Phi}_{prx}^{t+1}(z^t)\\
&&+(-\frac{p}{2\beta}+\frac{p}{{\sigma}_4}-p)\|z^t-z^{t+1}\|^2+2p(z^{t+1}-z^t)^T\left(x(y^{t+1}, z^{t+1})-x^*(z^t)\right),
\end{eqnarray*}
where the second step follows from \eqref{eq:prox-descent}.
%Splitting the term $(z^{t+1}+z^t-2x(y^{t+1}, z^{t+1}))$, we get
%\begin{eqnarray*}
%-(z^{t+1}-z^t)^T(z^{t+1}\!\!&+&\!\!z^t-2x(y^{t+1}, z^{t+1}))\nonumber\\
%&=&-(z^{t+1}-z^t)^T\left((z^{t+1}-z^t)+2(z^t- %x^*(z^t))+2(x^*(z^t)-x(y^{t+1}, z^t))\right.\\
%%&&\left.+2(x(y^{t+1}, z^t)-
%x(y^{t+1}, z^{t+1}))\right)\nonumber\\
%&=&-\|z^{t+1}-z^t\|^2-2(z^{t+1}-z^t)^T\left(z^t %-x^*(z^t))+2(x^*(z^t)-x(y^{t+1}, z^t))\right.\\
%%&&\left.+2(x(y^{t+1}, z^t)
%-x(y^{t+1}, z^{t+1})\right).\nonumber
%\end{eqnarray*}
%Using the arithmetic-geometric mean inequality, we see that
%\begin{eqnarray*}
%%&&
%-2(z^{t+1}-z^t)^T(z^t %-x^*(z^t))+2(x^*(z^t)-x(y^{t+1}, z^t))+2(x(y^{t+1}, z^t)
%-x(y^{t+1}, z^{t+1}))
%&\le&\|z^t-z^{t+1}\|^2+\|z^t-%x^*(z^t))+(x^*(z^t)-x(y^{t+1}, z^t))+(x(y^{t+1}, z^t)-
%x(y^{t+1}, z^{t+1})\|^2
%\end{eqnarray*}
Using the inequality $2a^Tb\le \|a\|^2+\|b\|^2$ for any $a,b\in\mathbb{R}^n$, we have
\begin{eqnarray*}
\Phi_p^{t+1}(z^{t+1})\!\!&+&\!\!\Phi_d^{t+1}(z^{t+1})+\Phi_{prx}^{t+1}(z^{t+1})\\
&\le& \Phi_p^{t+1}(z^t)+{\Phi}_d^{t+1}(z^t)+{\Phi}_{prx}^{t+1}(z^t)\\
&&+(-\frac{p}{2\beta}+\frac{p}{{\sigma}_4}-p)\|z^t-z^{t+1}\|^2+p\|z^t-z^{t+1}\|^2+p
\|x(y^{t+1}, z^{t+1})-x^*(z^t)\|^2\nonumber\\
&=& \Phi_p^{t+1}(z^t)+{\Phi}_d^{t+1}(z^t)+{\Phi}_{prx}^{t+1}(z^t)\\
&&+(-\frac{p}{2\beta}+\frac{p}{{\sigma}_4})\|z^t-z^{t+1}\|^2+p
\|x(y^{t+1}, z^{t+1})-x^*(z^t)\|^2\nonumber\\
&\le& \Phi_p^{t+1}(z^t)+{\Phi}_d^{t+1}(z^t)+{\Phi}_{prx}^{t+1}(z^t)\\
&&+(-\frac{p}{2\beta}+\frac{p}{{\sigma}_4})\|z^t-z^{t+1}\|^2+\frac{p\sigma_5^2}{\sigma_3^2}\|Ax(y^{t+1}, z^t)-b\|^2,\nonumber
\end{eqnarray*}
where the last step is due to the error bounds \eqref{eb6}, \eqref{eb5} in Lemma \ref{error bound}.
Let
$$
D_1(\beta)=\tau_1,\quad D_2(\beta)=\tau_2+\frac{p\sigma_5^2}{\sigma_3^2},\quad D_3(\beta)=\tau_3,\quad D_4(\beta)=-\frac{p}{2\beta}+\frac{p}{{\sigma}_4},
$$
then we have
\begin{eqnarray*}\nonumber
\Phi^{t+1}(z^{t+1}) &=&\Phi_p^{t+1}(z^{t+1})+\Phi_d^{t+1}(z^{t+1})+\Phi_{prx}^{t+1}(z^{t+1})\\
%&\Le& \Delta_P^{T+1}(Z^T)+{\Delta}_D^{T+1}(Z^T)+{\Delta}_{Prx}^{T+1}(Z^T)\\
%&&\Left(-\Frac{P}{2\Beta}+\Frac{P}{{\sigma}_4}-P\Right)\|Z^T-Z^{T+1}\|^2+2p(z^{t+1}-z^t)^T\Left(x(y^{T+1}, z^{T+1})-x^*(z^T)\Right)\\
&\le&\Phi_p^{t+1}(z^t)+{\Phi}_d^{t+1}(z^t)+{\Phi}_{prx}^{t+1}(z^t)\\
&&+(-\frac{p}{2\beta}+\frac{p}{{\sigma}_4})\|z^t-z^{t+1}\|^2+\frac{p\sigma_5^2}{\sigma_3^2}\|Ax(y^{t+1}, z^t)-b\|^2\\
%& \le&{\Delta}_p^{t+1}(z^t)+{\Delta}_d^{t+1}(z^t)+{\Delta}_{prx}^{t+1}(z^t)\nonumber\\
%&&+\left(-\frac{p}{2\beta}+\frac{3p}{\sigma_4^2}\right)\|z^t-z^{t+1}\|^2\nonumber\\
%&&+3p\|x^*(z^t)-z^t\|^2+\frac{3p\sigma_5^2}{\sigma_3^2}\|Ax(y^{t+1}, z^t)-b\|^2\nonumber\\
&\le& D_1(\beta)\|x^t-x^{t+1}\|^2+D_2(\beta)\|Ax(y^{t+1}, z^t)-b\|^2\nonumber\\
&&+D_3(\beta)\|z^t-x^*(z^t)\|^2+D_4(\beta)\|z^t-z^{t+1}\|^2,\label{eq:Delta}
\end{eqnarray*}
where the last step follows from \eqref{eq1}-\eqref{eq3}.
%where $D_1(\beta)=\tau_1$, $D_2(\beta)=\tau_2+3p\sigma_5^2/\sigma_3^2$, $D_3(\beta)=\tau_3+3p$ and $D_4(\beta)=-\frac{p}{2\beta}+3p/\sigma_4^2$ and (ii) is due to Lemma \ref{cost-to-go}.
Recall the definition of potential function $\phi$ (cf.\ \eqref{phi}). It follows that
\begin{eqnarray*}
\phi^{t}-\phi^{t+1}&=&\left({\Phi}_p^{t+1}(z^t)+{\Phi}_d^{t+1}(z^t)+{\Phi}_{prx}^{t+1}(z^t)\right)
-\left(\Phi_p^{t+1}(z^{t+1})+\Phi_d^{t+1}(z^{t+1})+\Phi_{prx}^{t+1}(z^{t+1})\right)\\
&=&\Phi^t(z^t)-\Phi^{t+1}(z^{t+1})\\
&\ge&C_1(\beta)\|x^t-x^{t+1}\|^2+C_2(\beta)\|Ax(y^{t+1}, z^t)-b\|^2\nonumber\\ &&
+C_3(\beta)\|z^t-x^*(z^t)\|^2+C_4(\beta)\|z^t-z^{t+1}\|^2 \\ %\label{eq:Delta}
&\ge&\kappa\left(D_1(\beta)\|x^t-x^{t+1}\|^2+D_2(\beta)\|Ax(y^{t+1}, z^t)-b\|^2\right.\nonumber\\ &&
\left.+D_3(\beta)\|z^t-x^*(z^t)\|^2+D_4(\beta)\|z^t-z^{t+1}\|^2\right)\\
&\ge &\kappa \Phi^{t+1}(z^{t+1}) ,%\label{eq:Delta}
\end{eqnarray*}
where $\kappa:=\min_{i: D_i(\beta)>0}\{\frac{C_i(\beta)}{D_i(\beta)}\}$ and the first inequality follows from Lemma \ref{descent}. This completes the proof.
\end{proof}

Combining the above analysis, we can prove the following theorem.
\begin{theorem}\label{linear-convergence}
There exists a $t_{\max}>0$ such that
$$\Phi^t(z^t)-\Phi^{t+1}(z^{t+1})\ge \kappa\Phi^{t+1}(z^{t+1}), \ \mbox{ for any }t\ge t_{\max}.$$
\end{theorem}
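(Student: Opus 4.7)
The plan is to reduce the statement of Theorem~\ref{linear-convergence} to a direct application of Lemma~\ref{one-step-linear-convergence}, which already gives the one-step contraction $\Phi^t(z^t)-\Phi^{t+1}(z^{t+1})\ge \kappa\Phi^{t+1}(z^{t+1})$ \emph{provided} that two conditions hold at iteration $t$, namely $f(\bar z^t)=f^*$ and $\max\{\dist(z^t,X^*),\,\|Ax(y^{t+1},z^t)-b\|\}<\Delta$. Thus the entire task reduces to verifying that both of these conditions are eventually satisfied along the trajectory of Algorithm~\ref{Alg2}, and then choosing $t_{\max}$ to be the first iteration after which they both persist.

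First I would handle the condition $f(\bar z^t)=f^*$. By Theorem~\ref{th:main}, $\dist(z^t,X^*)\to 0$ as $t\to\infty$. Claim~\ref{claim}(4) furnishes a positive threshold $\epsilon(x^0,y^0,z^0)$ such that whenever $\dist(z^t,X^*)<\epsilon(x^0,y^0,z^0)$ one has $f(\bar z^t)=f^*$. Hence there is an index $t_1$ such that this equality holds for every $t\ge t_1$. Conceptually, this step uses the fact that the sequence $\{z^t\}$ is trapped in a single connected component of $X^*$ on which $f$ takes a constant value, which was established for the quadratic case right before Claim~\ref{claim}.

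Next I would verify the smallness condition $\max\{\dist(z^t,X^*),\,\|Ax(y^{t+1},z^t)-b\|\}<\Delta$. The bound $\dist(z^t,X^*)<\Delta$ holds for all sufficiently large $t$ by the same convergence statement used above. For the dual residual, I would appeal to the potential descent inequality~\eqref{potential-descent}, which, together with the lower boundedness of $\phi^t$ in \eqref{phi}, yields $\sum_{t=0}^\infty \|Ax(y^{t+1},z^t)-b\|^2 <\infty$, and in particular $\|Ax(y^{t+1},z^t)-b\|\to 0$. Therefore there is an index $t_2$ such that $\|Ax(y^{t+1},z^t)-b\|<\Delta$ for all $t\ge t_2$.

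Setting $t_{\max}=\max\{t_1,t_2\}$, both hypotheses of Lemma~\ref{one-step-linear-convergence} are in force for every $t\ge t_{\max}$, so the conclusion $\Phi^t(z^t)-\Phi^{t+1}(z^{t+1})\ge \kappa\Phi^{t+1}(z^{t+1})$ holds on that tail, proving the theorem. There is essentially no obstacle here since all the heavy lifting has been done in Lemma~\ref{one-step-linear-convergence} and in the global convergence result Theorem~\ref{th:main}; the only care needed is to ensure that the quantities appearing in the two hypotheses really do come from sequences we have already shown to be null, namely $\dist(z^t,X^*)$ (from Theorem~\ref{th:main}) and $\|Ax(y^{t+1},z^t)-b\|$ (from the summability implied by \eqref{potential-descent}).
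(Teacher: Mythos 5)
Your proposal is correct and follows essentially the same route as the paper: both reduce the theorem to Lemma~\ref{one-step-linear-convergence} and verify its two hypotheses eventually hold, using Claim~\ref{claim} for $f(\bar z^t)=f^*$ and the convergence facts from Theorem~\ref{th:main} (equivalently, the descent inequality \eqref{potential-descent}) for the $\Delta$-smallness conditions. Your explicit appeal to the summability of $\|Ax(y^{t+1},z^t)-b\|^2$ is a slightly more careful justification of a step the paper states without elaboration, but it is the same argument.
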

\begin{proof}
By Claim \ref{claim},  we have
$$f(\bar{z}^t)=f^*,\ \mbox{ when $t$ is sufficiently large}.$$
Moreover, according to Theorem \ref{th:main}, when $t$ is sufficiently large, we have
\begin{eqnarray*}
	\dist(z^t, X^*)&<&\Delta,\\
	\|Ax(y^{t+1}, z^t)-b\|&<&\Delta.
\end{eqnarray*}
Hence there exists a $t_{\max}$ such that the conditions of Lemma~\ref{one-step-linear-convergence} hold after $t\ge t_{\max}$. The linear convergence follows immediately from Lemma~\ref{one-step-linear-convergence}.
\end{proof}

\noindent{\bf Remark.} Note that it follows from Lemma~\ref{proximal residual}, \eqref{potential-descent} and Theorem \ref{linear-convergence} that the sequence $\{z^t\}$ converges R-linearly  after $t\ge t_{\max}$.
The constant $t_{\max}$ depends on the problem structure and the initial point. It can be shown that
$$t_{\max}=\max\left\{\frac{\Phi^0(z^0)\bar{\tau}^2\bar{B}}{\min\{\Delta, \epsilon(x^0, y^0, z^0)\}}, \frac{\Phi^0(z^0)}{\Delta^2\alpha/2}\right\}\cdot \max\left\{1, 1/\kappa\right\}.$$
However, this estimate depends on some unknown constants such as $\Delta$ and $\epsilon(x^0,y^0,z^0)$ which are implicitly defined by the problem structure and geometry. A global error bound is needed to provide an explicit estimate of $t_{\max}$. This will be the focus of a forthcoming paper.

\section{Multi-block Case and a Linearized Proximal ADMM}
In this section we consider the multi-block case. Consider the following optimization problem:
\begin{equation}
\label{P2}
\begin{array}{ll}
\mbox{minimize}& f(x_1, x_2, \cdots, x_k)\\ [5pt]
\mbox{subject to} & \sum_{j=1}^kA_jx_j=b,\ x\in P,
\end{array}
\end{equation}
where $x_j\in \mathbb{R}^{n_j}$ for all $1\le j\le k$, $x=(x_1, x_2, \cdots, x_k)\in \mathbb{R}^n$ and $P=\prod_{i=1}^n[\ell_i, u_i]$.
We also denote $A=[A_1, A_2,\cdots, A_k]$ and $\bar{\sigma}=\max_{1\le i\le k}\|A_i\|_2$, where $\|\cdot\|_2$ is the spectral norm of a matrix.
We still adopt Assumption \ref{feasible-int} and hence every partial gradient $\nabla_{x_i}f(\cdot)$ is $L$-Lipschitz-continuous.
To solve this problem, we use the following linearized proximal ADMM \ref{Alg3}, which updates the primal variables blockwise. Let us denote
$$x^t(j)=(x_1^{t+1}, x_2^{t+1}, \cdots, x_{j-1}^{t+1}, x_j^t, \cdots, x_k^t).$$
% Blockwise or blockwisely?
\begin{algorithm}[ht]
\caption{Linearized Proximal ADMM}
\label{Alg3}
\begin{algorithmic}[1]
\STATE Let $\alpha>0$, $0<\beta\le 1$ and $0<c\le \frac{1}{\bar{L}+p+\Gamma\bar{\sigma}^2}$;
\STATE Initialize $x^0\in P,\ z^0\in P,\ y^0\in \mathbb{R}^m$;
\FOR{$t=0,1,2,\ldots,$}
\STATE $y^{t+1}=y^t+\alpha(Ax^t-b)$;
\FOR{$j=1, 2, \cdots, k$}
{\color{black}\STATE $x^{t+1}_j=[x^t_j-c\nabla_{x_j} K(x^t(j), z^t; y^{t+1})]_+^j$;}
\ENDFOR
\STATE $z^{t+1}=z^t+\beta(x^{t+1}-z^t)$. %, where $0<\beta \le 1$.
\ENDFOR
% \Ensure
\end{algorithmic}
\end{algorithm}
{\color{black}Here $[\cdot]_+^j$ means the projection to the set $\prod_{i\in {\cal N}_j}[\ell_i, u_i]$, where ${\cal N}_j$ is the set of indexes for the $j$-th variable block $x_j$.} Note that here we take the stepsize $$c<\frac{1}{{L}+p+\Gamma\bar{\sigma}^2}.$$ %, \frac{1}{L+p+\Gamma\sigma^2}\}.$
To prove the convergence of Algorithm \ref{Alg3}, we can follow the same line as that for the proof of Theorem \ref{th:main}. The only differences are the proof for \eqref{eb1}-\eqref{eb3} in Lemma \ref{error bound} and the proof of Lemma \ref{primal}.
For the primal error bounds \eqref{eb1}-\eqref{eb3}, we only give the proof of the first one and the others can be proved using the same techniques as that in the proof of Lemma \ref{error bound}.
\begin{lemma}
For any $t$, there exists a constant $\bar{\sigma}_1$, such that
$$\|x^t-x^{t+1}\|\ge \bar{\sigma}_1\|x^t-x(y^{t+1}, z^t)\|.$$
\end{lemma}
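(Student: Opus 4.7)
The plan is to reduce this multi-block error bound to the single-block case \eqref{eb1} in Lemma~\ref{error bound} by introducing a ``virtual'' one-shot update and then controlling the discrepancy caused by the sequential (Gauss--Seidel) nature of Algorithm~\ref{Alg3}. Fix $t$ and abbreviate $K^t(x):=K(x,z^t;y^{t+1})$, so that $K^t$ is strongly convex in $x$ with modulus $\gamma_K=p+\gamma>0$ and has a Lipschitz gradient with constant $L_K=L+p+\Gamma\bar{\sigma}^2$. Define the auxiliary iterate
\[
\tilde{x}^{t+1}:=[x^t-c\,\nabla K^t(x^t)]_+,
\]
which is the ordinary (Jacobi-type) gradient projection of $x^t$. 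Since the box $P$ is separable and the projection splits blockwise, $\tilde{x}^{t+1}$ is exactly what Algorithm~\ref{Alg2} would produce from $(x^t,z^t,y^{t+1})$.

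First, because $c<1/L_K$ and $K^t$ is strongly convex, the single-block error bound \eqref{eb1} applied to $\tilde{x}^{t+1}$ yields
\[
\|\tilde{x}^{t+1}-x^t\|\ \ge\ \sigma_1\,\|x^t-x(y^{t+1},z^t)\|,
\]
with $\sigma_1=c(p+\gamma)$. Thus it remains to bound $\|\tilde{x}^{t+1}-x^t\|$ above in terms of the actual multi-block displacement $\|x^{t+1}-x^t\|$.

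For each block $j$, both $\tilde{x}^{t+1}_j$ and $x^{t+1}_j$ are obtained by projecting onto the same coordinate box $\prod_{i\in{\cal N}_j}[\ell_i,u_i]$, the only difference being that the partial gradient is evaluated at $x^t$ versus at the partially-updated point $x^t(j)$. Using non-expansiveness of the projection and Lipschitz continuity of $\nabla_{x_j}K^t$,
\[
\|\tilde{x}^{t+1}_j-x^{t+1}_j\|\ \le\ c\,\|\nabla_{x_j}K^t(x^t)-\nabla_{x_j}K^t(x^t(j))\|\ \le\ cL_K\,\|x^t-x^t(j)\|\ \le\ cL_K\,\|x^{t+1}-x^t\|,
\]
where the last inequality uses the fact that $x^t(j)$ differs from $x^t$ only in the blocks $1,\dots,j{-}1$. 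Combining with $\|\tilde{x}^{t+1}_j-x^t_j\|\le\|\tilde{x}^{t+1}_j-x^{t+1}_j\|+\|x^{t+1}_j-x^t_j\|$, squaring, summing over $j$ using $(a+b)^2\le 2a^2+2b^2$, and recalling that $\sum_j\|x^{t+1}_j-x^t_j\|^2=\|x^{t+1}-x^t\|^2$, we obtain
\[
\|\tilde{x}^{t+1}-x^t\|^2\ \le\ 2\bigl(kc^2L_K^2+1\bigr)\|x^{t+1}-x^t\|^2.
\]
Chaining this with the previous lower bound on $\|\tilde{x}^{t+1}-x^t\|$ gives the desired estimate with $\bar{\sigma}_1:=\sigma_1/\sqrt{2(kc^2L_K^2+1)}>0$.

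The only delicate point is step two, where one must argue that replacing the ``true'' gradient at $x^t$ with the partially-refreshed gradient at $x^t(j)$ incurs an error controlled by $\|x^{t+1}-x^t\|$ rather than by $\|x^t-x(y^{t+1},z^t)\|$; the Gauss--Seidel staleness $\|x^t-x^t(j)\|$ is by construction dominated by the cumulative block displacements up to block $j{-}1$, which is exactly why the inequality closes. Everything else is bookkeeping, and the analogous proofs of \eqref{eb2}--\eqref{eb3} proceed by the same device (introduce the Jacobi surrogate iterate, invoke the single-block error bound, then absorb the Gauss--Seidel perturbation via Lipschitzness of $\nabla K^t$).
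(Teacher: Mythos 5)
Your proof is correct and follows essentially the same route as the paper: both compare the Gauss--Seidel update to the one-shot gradient projection $[x^t-c\nabla K(x^t,z^t;y^{t+1})]_+$ (the paper packages the discrepancy as a perturbation $\Theta(t)$ in an ``approximate gradient projection,'' you introduce it explicitly as $\tilde{x}^{t+1}$), invoke the single-block error bound \eqref{eb1}, and absorb the staleness $\|x^t-x^t(j)\|$ via non-expansiveness of the blockwise projection and Lipschitz continuity of the partial gradients, arriving at a constant of the same form $\bar{\sigma}_1=\sigma_1/\text{poly}(k,cL_K)$.
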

\begin{proof}
Since this lemma is not related to the update of $y, z$, for notation simplicity, we denote $K(x)=K(x, z^t; y^{t+1})$.
The proof consists of two parts:
\begin{enumerate}
\item When the primal variables are updated via the block coordinate gradient descent scheme, it is a type of approximate gradient projection algorithm, namely,
$$x^{t+1}=[x^t-c\nabla K(x^t)+\Theta(t)]_+,$$
where $\Theta(t)$ satisfies
\begin{equation}\label{Theta}
\|\Theta(t)\|< \eta\|x^t-x^{t+1}\|
\end{equation}
for some positive constant $\eta>0$.
\item Prove the approximate gradient projection algorithm has the primal error bound.
\end{enumerate}
We consider the first part. Notice that
\begin{eqnarray}
x^{t+1}_j&=&[x^t_j-c\nabla_{x_j} K(x^t(j))]_+^j,\nonumber\\
&=&[x^t_j-c\nabla_{x_j} K(x^t)+c(\nabla_{x_j} K(x^t(j))-\nabla_{x_j} K(x^t))]_+^j\nonumber\\
&=&[x^t-c\nabla_{x_j} K(x^t)+\Theta_j(t)]_+^j,\nonumber,
\end{eqnarray}
where $\Theta_j(t)=c\left(\nabla_{x_j} K(x^t(j))-\nabla_{x_j} K(x^t)\right)$.
Due to the Lipschitz continuity of the partial gradient of $K$, we have
\begin{eqnarray}
\|\Theta_j(t)\|&\le&c({L}+p+\Gamma\bar{\sigma}^2)\|x^t(j)-x^t\|\nonumber\\
&\le&c({L}+p+\Gamma\bar{\sigma}^2)\sqrt{\sum_{i=1}^{j-1}\|x_i^{t+1}-x_i^t\|^2}\nonumber\\
&\le& c({L}+p+\Gamma\bar{\sigma}^2)\sum_{i=1}^{j-1}\|x_i^t-x_i^{t+1}\|,
\end{eqnarray}
where the last inequality is proved by just squaring both sides of the inequality.
Since $\sum_{i=1}^{j-1}\|x_i^t-x_i^{t+1}\|\le \sum_{i=1}^k\|x_i^t-x_i^{t+1}\|$, we have
\begin{eqnarray}
\|\Theta(t)\|&=&\|(\Theta_1(t), \cdots, \Theta_k(t))\|\nonumber\\
&\le&c({L}+p+\Gamma\bar{\sigma}^2)k\sum_{i=1}^k\|x^t_i-x^{t+1}_i\|\nonumber\\
&\le&c({L}+p+\Gamma\bar{\sigma}^2)k^{3/2}\|x^t-x^{t+1}\|,\nonumber
\end{eqnarray}
where the last inequality is due to Cauchy-Schwartz inequality.
This finishes the proof of \eqref{Theta} with $\eta=c({L}+p+\Gamma\sigma^2)k^{3/2}$.
For the second part, we have
\begin{eqnarray}
\|x^t-\!\!&x&\!\!\!^{t+1}\|=\|x^t-[x^{t}-c\nabla K(x^t)+\Theta(t)]_+\|\nonumber\\
&\stackrel{ \mbox{\scriptsize(i)}}\ge& \|x^t-[x^t-c\nabla K(x^t)]_+\|-\|[x^t-c\nabla K(x^t)]_+-[x^t-c\nabla K(x^t)+\Theta(t)]_+\|\nonumber\\
&\stackrel{ \mbox{\scriptsize(ii)}}\ge& \sigma_1\|x^t-x(y^{t+1}, z^t)\|-\|[x^t-c\nabla K(x^t)]_+-[x^t-c\nabla K(x^t)+\Theta(t)]_+\|\nonumber\\
&\stackrel{ \mbox{\scriptsize(iii)}}\ge& \sigma_1\|x^t-x(y^{t+1}, z^t)\|-\|\Theta(t)\|\nonumber\\
&\stackrel{ \mbox{\scriptsize(iv)}}\ge& \sigma_1\|x^t-x(y^{t+1}, z^t)\|-\eta\|x^t-x^{t+1}\|,\nonumber
\end{eqnarray}
where (i) is because of the triangular inequality, (ii) is due to the error bound \eqref{eb1} in Lemma \ref{error bound}, (iii) is due to the nonexpansiveness of the projection operator and (iv) is because of \eqref{Theta}. Hence setting $\bar{\sigma}_1=\sigma_1/(1+\eta)$ completes the proof.
\end{proof}
Next we establish a simple lemma to ensure that the sufficient decrease result Lemma \ref{primal} holds true for the multi-block case.
\begin{lemma}\label{primal2}
For any $t$, we have
$$K(x^t, z^t; y^{t+1})-K(x^{t+1}, z^t; y^{t+1})\ge \frac{1}{2c}\|x^t-x^{t+1}\|^2.$$
\end{lemma}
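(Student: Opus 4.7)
\medskip

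\noindent\textbf{Proof proposal.} The plan is to reduce the multi-block descent inequality to the single-block gradient-projection descent inequality applied once per block, and then telescope. Write the intermediate iterates
\[
x^t(1)=x^t,\; x^t(2),\; \ldots,\; x^t(k),\; x^t(k+1)=x^{t+1},
\]
so that $x^t(j)$ and $x^t(j+1)$ differ only in the $j$-th block, with $x_j^{t+1}=[x_j^t-c\nabla_{x_j}K(x^t(j),z^t;y^{t+1})]_+^j$. Since $K(\cdot,z^t;y^{t+1})$ has partial gradient $\nabla_{x_j}K$ which is Lipschitz in $x_j$ (with the other blocks held fixed) with modulus at most $L_K=\bar L+p+\Gamma\bar\sigma^2$, the choice $c\le 1/L_K$ allows us to apply the standard gradient-projection descent lemma in the $j$-th block: writing $u=x^t(j)$ and $u^+=x^t(j+1)$, the descent lemma gives
\[
K(u^+,z^t;y^{t+1})\le K(u,z^t;y^{t+1})+\langle\nabla_{x_j}K(u,z^t;y^{t+1}),x_j^{t+1}-x_j^t\rangle+\tfrac{L_K}{2}\|x_j^{t+1}-x_j^t\|^2,
\]
while the projection optimality condition implies $\langle\nabla_{x_j}K(u,z^t;y^{t+1}),x_j^{t+1}-x_j^t\rangle\le -\tfrac{1}{c}\|x_j^{t+1}-x_j^t\|^2$. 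Combining and using $c\le 1/L_K$ yields
\[
K(x^t(j),z^t;y^{t+1})-K(x^t(j+1),z^t;y^{t+1})\ge\Bigl(\tfrac{1}{c}-\tfrac{L_K}{2}\Bigr)\|x_j^{t+1}-x_j^t\|^2\ge\tfrac{1}{2c}\|x_j^{t+1}-x_j^t\|^2.
\]

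Telescoping this per-block inequality from $j=1$ to $j=k$ collapses the left-hand side to $K(x^t,z^t;y^{t+1})-K(x^{t+1},z^t;y^{t+1})$, while the right-hand side becomes $\tfrac{1}{2c}\sum_{j=1}^k\|x_j^{t+1}-x_j^t\|^2=\tfrac{1}{2c}\|x^{t+1}-x^t\|^2$, which is exactly the claim.

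The only nontrivial point is making sure the same Lipschitz constant $L_K$ that controls the full gradient also controls each partial gradient in its own block; this is immediate from the explicit form $K(x,z;y)=f(x)+y^T(Ax-b)+\tfrac{\Gamma}{2}\|Ax-b\|^2+\tfrac{p}{2}\|x-z\|^2$, since the Lipschitz constant of $\nabla_{x_j}K(\cdot_{x_j},\text{rest};z;y)$ is bounded by $L+p+\Gamma\|A_j\|_2^2\le \bar L+p+\Gamma\bar\sigma^2$. The stepsize restriction $c\le 1/(\bar L+p+\Gamma\bar\sigma^2)$ stated in Algorithm~\ref{Alg3} therefore supplies exactly the bound needed on every block, so no further argument is required.
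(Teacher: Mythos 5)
Your proof is correct and follows essentially the same route as the paper: apply the one-block gradient-projection descent inequality to each block update in sequence, then telescope and use $\sum_j\|x_j^t-x_j^{t+1}\|^2=\|x^t-x^{t+1}\|^2$. You merely spell out the per-block descent step (descent lemma plus projection optimality) that the paper states in one line, so there is nothing substantively different to compare.
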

\begin{proof}
Notice that, by Assumption~\ref{feasible-int}(c), the partial gradient of $K$ with respect to any block is $c^{-1}$-Lipschitz continuous, so we have
$$K(x^t(j), z^t; y^{t+1})-K(x^t(j+1), z^t ;y^{t+1})\ge \frac{1}{2c}\|x^t_j-x^{t+1}_j\|^2,\mbox{ for all }1\le j\le k.$$
Here $x^t(0)=x^t$.
Summing this from $0$ to $k-1$ and using the fact that
$$\sum_{j=1}^k\|x^t_j-x^{t+1}_j\|^2=\|x^t-x^{t+1}\|^2$$
yields the desired result.
\end{proof}
This shows that the descent condition \eqref{eq:descent} holds true for the multi-block case, which further implies Lemma~\ref{primal} remains valid.
Equipped with these, we conclude that the Algorithm \ref{Alg3} converges globally.
\begin{theorem}
Suppose Assumption \ref{feasible-int} holds and the parameters $c$ and $p$ satisfy
$$\frac{1}{{L}+p+\Gamma\bar{\sigma}^2}>c>0,\quad p>-\gamma$$
and that the primal and dual stepsizes $\beta$ and $\alpha$ are sufficiently small. Then the dual iterates $\{y^t\}$ are bounded. Moreover, there holds
\begin{eqnarray}
\lim_{t\to \infty}\|x^{t+1}-x^t\|&=&0,\nonumber\\
\color{black}\lim_{t\to \infty}\dist(x^t, X^*)&=&0,\nonumber\\
\lim_{t \to \infty}\dist(z^t, X^*)&=&0,\nonumber
\end{eqnarray}
and every limit point of the sequence $\{(x^t, y^t)\}$ generated by Algorithm \ref{Alg3} is a primal-dual stationary point of \eqref{P2}.
\end{theorem}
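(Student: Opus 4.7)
The plan is to mirror the proof of Theorem~\ref{th:main} almost verbatim, replacing the single-block primal error bound and primal descent lemma by their block-coordinate analogues established immediately before the theorem. Concretely, I would first verify that each ingredient in the earlier proof still goes through in the multi-block setting: (i) the three descent lemmas in Subsection~\ref{sub:3descent}, (ii) the five error bounds of Lemma~\ref{error bound}, and (iii) the dual boundedness result Lemma~\ref{ybounded} together with Corollaries~\ref{coro1} and \ref{coro2}.

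For the descent lemmas, the key observation is that the dual-ascent and proximal-descent inequalities only use the definitions of $K, d, M$ and the $z$-update rule $z^{t+1}=z^t+\beta(x^{t+1}-z^t)$; they are unaffected by how $x^{t+1}$ is generated. Hence Lemmas~\ref{dual ascent} and \ref{proximal-descent} transfer without change. The primal-descent Lemma~\ref{primal} is the only one that exploits the exact gradient-projection structure, and the author has already supplied the replacement: Lemma~\ref{primal2} gives exactly the one-step decrease $K(x^t,z^t;y^{t+1})-K(x^{t+1},z^t;y^{t+1})\ge \tfrac{1}{2c}\|x^t-x^{t+1}\|^2$ required for inequality \eqref{eq:descent}, so the combination with the trivial equality in $y$ and \eqref{zprimal} reproduces Lemma~\ref{primal} with the new step size $c\le 1/(L+p+\Gamma\bar\sigma^2)$.

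For the error bounds, \eqref{eb6}, \eqref{eb4}, \eqref{eb3} follow from Lemma~\ref{continuity} and are blind to the block structure. The dual error bound \eqref{eb5} depends only on the Hoffman bound applied to the KKT system of the proximal problem \eqref{proximal} and on Lipschitz continuity of $\nabla K$, both intrinsic to the problem data and not to the algorithm; hence \eqref{eb5} carries over unchanged. The primal error bounds \eqref{eb1}--\eqref{eb2} are the only nontrivial point, and the preceding lemma has already established \eqref{eb1} with the constant $\bar\sigma_1=\sigma_1/(1+\eta)$ in place of $\sigma_1$; \eqref{eb2} then follows from \eqref{eb1} by the same ``triangle plus nonexpansiveness'' argument as in Appendix~\ref{Appendix:D}, with $\bar\sigma_2=\bar\sigma_1/(1+\bar\sigma_1)$ replacing $\sigma_2$.

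With these ingredients in hand, I would form the same potential function $\phi^t=K(x^t,z^t;y^t)-2d(y^t,z^t)+2M(z^t)$ and repeat the case analysis of the proof of Theorem~\ref{th:main} verbatim, using $\bar\sigma_1,\bar\sigma_2$ in place of $\sigma_1,\sigma_2$ and tightening the thresholds on $\alpha$ and $\beta$ in \eqref{def2} accordingly. This yields the sufficient-decrease inequality \eqref{potential-descent} in the multi-block setting, and since $\phi^t$ is bounded below by the same argument as in \eqref{phi}, summing gives $\|x^{t+1}-x^t\|\to 0$, $\|Ax(y^{t+1},z^t)-b\|\to 0$ and $\|z^t-z^{t+1}\|\to 0$, which in view of $\|x^{t+1}-z^t\|=\|z^{t+1}-z^t\|/\beta$ implies $\|z^t-x^+\|\to 0$. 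Applying Corollary~\ref{coro1} then yields $\dist(x^t,X^*)\to 0$ and $\dist(z^t,X^*)\to 0$, while Lemma~\ref{ybounded} gives boundedness of $\{y^t\}$; any limit point $(x^\infty,y^\infty)$ then satisfies the fixed-point identity of Lemma~\ref{limitpoint} and hence lies in $W$.

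The main obstacle is conceptual rather than technical: one must be sure that the Gauss--Seidel style block update genuinely qualifies as an ``approximate gradient projection'' in the sense required by the error bound machinery. This is precisely where the new constant $\eta$ enters, coming from the Lipschitz continuity of the partial gradients of $K$ and the cascading of block updates in $x^t(j)$. Once \eqref{Theta} is in place the degradation from $\sigma_1$ to $\bar\sigma_1$ is absorbed cleanly into the constants, and the rest of the argument is a transcription of the single-block proof with the obvious renaming of constants; no new analytical idea is required.
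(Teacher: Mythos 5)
Your proposal is correct and follows essentially the same route as the paper: the authors likewise reduce the multi-block theorem to the proof of Theorem~\ref{th:main}, identifying the primal descent inequality (replaced by Lemma~\ref{primal2}) and the primal error bounds \eqref{eb1}--\eqref{eb3} (replaced via the approximate gradient projection argument with the constant $\eta$ and $\bar{\sigma}_1=\sigma_1/(1+\eta)$) as the only ingredients requiring modification, with all remaining lemmas carrying over unchanged. Your transcription of the potential-function argument with the renamed constants matches the paper's intended completion of the proof.
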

\section{Numerical Results}
In this section, we show some numerical results to show that our algorithm is more efficient than some existing algorithms and the exponential weighting step is helpful for convergence.
We consider the quadratic programming problem:
\begin{equation}
\begin{aligned}
(QP)\quad \min_{\bm{x}}\ &\frac{1}{2}\bm{x}^T\bm{Q}\bm{x}+\bm{r}^T\bm{x}\\
\textrm{s.t.} \ & \bm{Ax}=\bm{b},\\
& \ell_i\leq \bm{x}_i \leq u_i,
\end{aligned}
\end{equation}
where $\bm{x}\in\mathbb{R}^n$, $\bm{Q}\in\mathbb{R}^{n\times n}$, $\bm{r}\in\mathbb{R}^n$, $\bm{A}\in\mathbb{R}^{m\times n}$, and $\bm{b}\in\mathbb{R}^m$.
\subsection{Why do we need the auxiliary sequence $\{z^t\}$?}
A natural question is whether we can set $\beta=1$ in Algorithm~\ref{Alg2} and thus eliminating the sequence $\{z^t\}$ from the iterations. In this subsection, we give some numerical results showing that $\beta<1$ is needed for convergence
and hence the sequence $\{z^t\}$ is necessary in Algorithm~\ref{Alg2}.
Here we set $n=500, m=100$. $Q=-U^TU$, where $U$ is a $500\times 500$ matrix with each entry following the normal distribution $N(0, 1)$. Every entry of $A$ and $r$ is generated from the normal distribution $N(0, 1)$. $b=Ax_0$, with $x_0\in \mathbb{R}^{500}$ uniformly distributed over $[0,5]^{500}$.Moreover,  $\ell_i=0$ and $u_i=1000$ for all $1\le i\le 500$.

In our experiment, we set $\Gamma=1000$ and $p=5000$. Consider the following two cases.

\noindent{\bf Case 1.} We use $\alpha=1000, 50, 1$ and plot the curves for $\|Ax^t-b\|$ in Fig.~\ref{fig:a}.

We see that Algorithm \ref{Alg} oscillate after $2\times 10^6$ iterations for these choices of $\alpha$.\\
\noindent{\bf Case 2.} We choose $\alpha=50$, $\beta=1, 0.02, 0.01$ and plot $f(x^t), \|Ax^t-b\|, \|x^{t+1}-z^t\|$ and $\frac{1}{c}\|x^t-x^{t+1}\|$. This is shown in Fig.~\ref{fig:b}, Fig.~\ref{fig:c} and Fig.~\ref{fig:d}.

We see that Algorithm \ref{Alg2} converges with $\beta=0.02$ and $\beta=0.01$. The algorithm with $\beta=0.02$ is faster, which suggests that we can try larger $\beta$ to achieve higher convergence speed.
  \begin{figure}[H]
  \centering
    \includegraphics[width=7.5cm,height=7.5cm]{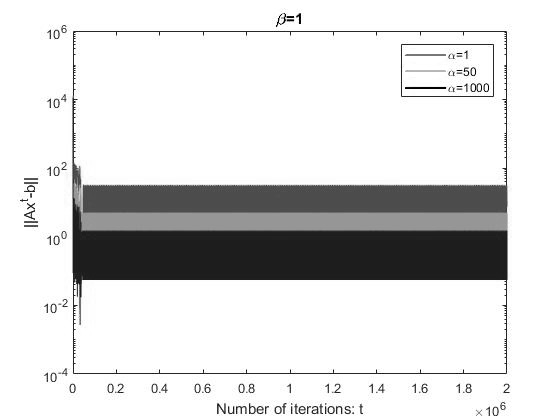}
    \caption{$\|Ax^t-b\|$ v.s.\ iterations with $\beta=1$}
    \label{fig:a}

  \centering
    \includegraphics[width=7.5cm,height=7.5cm]{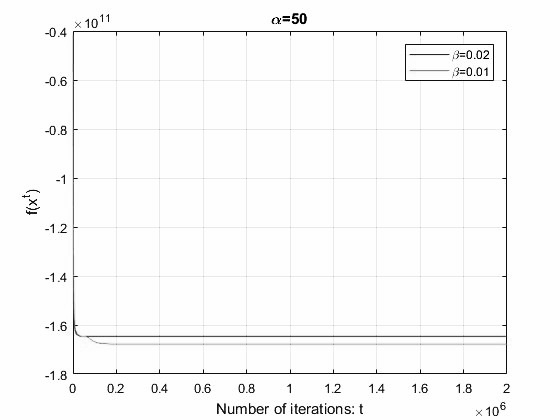}
    \caption{$f(x^t)$ v.s.\ iterations}
    \label{fig:b}
  \end{figure}
  \begin{figure}[H]
   \centering
    \includegraphics[width=7.5cm,height=7.5cm]{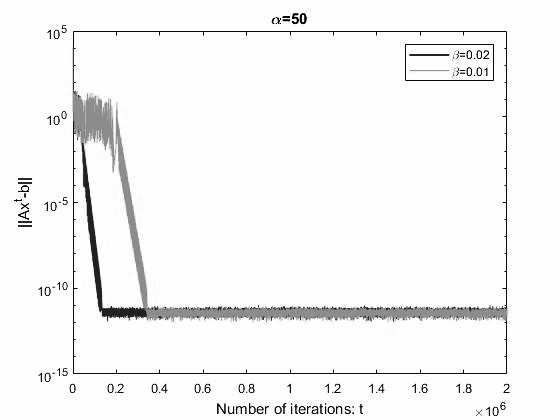}
    \caption{$\|Ax^t-b\|$ v.s.\ iterations}
    \label{fig:c}

  \centering
    \includegraphics[width=7.5cm,height=7.5cm]{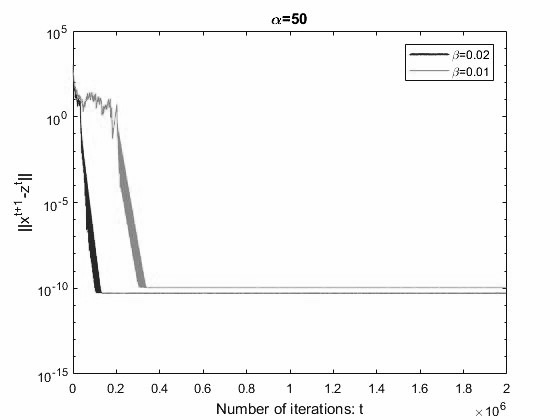}
    \caption{$\|x^{t+1}-z^t\|$ v.s.\ iterations}
    \label{fig:d}
\end{figure}

%\begin{figure}[H]
% \centering
% \includegraphics[width=7.5cm,height=7.5cm]{12.jpg}
% \caption{$\frac{1}{c}\|x^t-x^{t+1}\|$ v.s.\ iterations with $\beta=1$}
%\end{figure}

\subsection{Comparison with the penalty method}

We compare our proximal-ADMM algorithm with the QP-AIPP algorithm of \cite{Monteiro}  for solving the above QP with $n=20,m=5$, where $\bm{Q},\bm{A},\bm{r},\bm{b}$ are randomly generated from uniform distribution $\mathcal{U}[0,1]$, and $\ell_i$ and $u_i$ are set to be $0$ and $1$ respectively.

Parameters for our algorithm are set as follows:
$\Gamma=10$, $\alpha=\Gamma/4$, $p=2L_Q+2\Gamma\sigma^2$, $c=1/(2(L_Q+\Gamma\sigma^2))$, and $\beta=0.5$, where $L_Q$ is the eigenvalue of $Q$ with largest absolute value.

Parameter setting of the QP-AIPP algorithm are as follows (notations in Section 3.2 of \cite{Monteiro}, AIPP Algorithm): $\sigma=0.1$, $\bar{\Gamma}=10^{-3}$, $\bar{\epsilon}=10^{-3}$, $c=10^4$, $L=c\| \bm{A} \|_F^2+L_Q$ ($L_Q$ is defined above), and $\lambda=1/(2L_Q)$.

We use the sum of first order optimality gap and feasibility gap as a measure to evaluate the convergence behavior of the two algorithms, i.e., $\|x^t-[x^t-\nabla_xL(x^t; \lambda^t)]_+\|+\|Ax^t-b\|$.
Here for our algorithm, we set $\lambda^t=y^t$ while for the penalty method in \cite{Monteiro}, $\lambda^t=C(Ax^t-b)$.
We randomly generated two examples and the residuals are plotted in Fig.~\ref{fig:exp}. %[[The two figures have to be modified, changing QC to QP]]
\begin{figure}[H]
\centering
% \subfigure[Subject $1$]{
% \includegraphics[width=0.8\linewidth]{figures/Indv_SubS3.eps}

% }
\includegraphics[width=7.5cm,height=7.5cm]{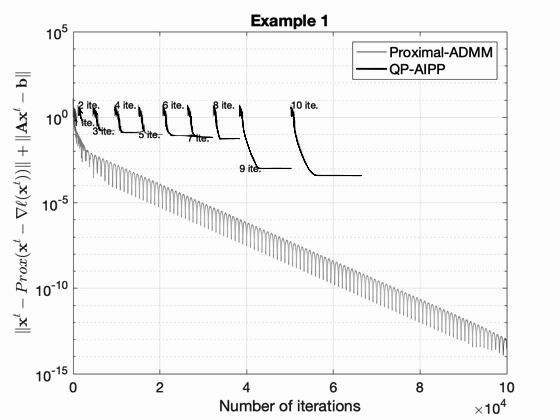}
\centering
\includegraphics[width=7.5cm,height=7.5cm]{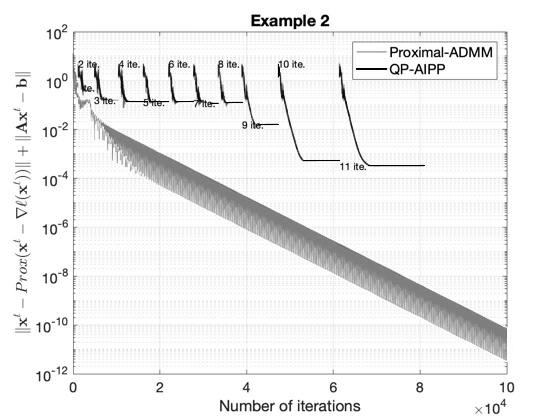}
\caption{Comparison of two examples.}
\label{fig:exp}
\end{figure}

\subsection{Comparison with the traditional ADMM}
Lastly, we show that the algorithm with linearization steps is faster than the traditional ADMM where any sub-problem is solved to a high accuracy by a first-order algorithm.

We consider the 2-block case:
\begin{equation}\nonumber
\begin{array}{rl} \min & x_1^TQ_1x_1+x_2^TQ_2x_2\\
\textmd{s.t.} & A_1x_1+A_2x_2=b,\\
 & x_1\in P_1, x_2\in P_2.
\end{array}
\end{equation}
Here $Q_1$ and $Q_2$ are $(n/2)\times (n/2)$ symmetric matrices, where  each entry
 is generated from a uniform distribution over $[0,1]$. $A_1, A_2\in \mathbb{R}^{m \times (n/2)}$ and $b\in \mathbb{R}^{m}$ with each entry generated from the uniform distribution  over $[0,1]$. $P_1=P_2=[0, 10]^{n/2}$ are two boxes.
The following shows the convergence of the original ADMM and our smoothed proximal ADMM using the same measure as the above subsections.
\begin{table}[htbp]
	\centering
	\caption{Original ADMM v.s.\ the proposed Proximal ADMM (number of gradient evaluations)}
	\label{table1}

	\begin{tabular}{|c|c|c|c|c|}
		\hline
		$n$&$m$&eps&Original ADMM &the proposed Proximal ADMM \\
		\hline
		20&2&$10^{-4}$&20695&852 \\
        20&8&$10^{-4}$&86359&1024 \\
        20&2&$10^{-5}$&136495&7845 \\
        20&8&$10^{-5}$&162870&11743 \\
		\hline
	\end{tabular}
\end{table}
This experiment shows that, as a single-loop algorithm, the proposed Proximal ADMM algorithm is more efficient than the original double-loop ADMM algorithm in which the inner-loop can be time consuming.

\appendix
%\appendixpage

%\section{Proof of Lemma \ref{continuity}}\label{Appendix:A}

%red

\section{Proof of Corollary \ref{coro1}}\label{Appendix:B}
\begin{proof}
We prove by contradiction.
Suppose that there exist sequences $\{y^{i}\}\subseteq \tilde{y}^0+\mathrm{range}(A),  \subseteq (\tilde{y}^0+\mathrm{Range}(A))$ and $\{z^i\}$ such that
$$\lim_{i\to\infty}\|x^i-(x^i)^+\|=\lim_{i\to\infty} \|z^i-(x^i)^+\|=\lim_{i\to\infty} \|Ax((y^i)^+, z^i)-b\|=0$$
while $\max\{\lim\sup_{i\to\infty}\dist(x^i,X^*), \lim\sup_{i\to\infty}\dist(z^i, X^*)\}>0$.
Passing to a sub-sequence if necessary, we assume that $\max\{\lim_{i\to \infty}\dist(x^i ,X^*), \lim_{i\to\infty}\dist(z^i, X^*),$ $\lim_{i\to \infty}\|x((y^i)^+, z^i)-x^*(z^i)\|\}>0$.
On one hand, since $\{y^i\}\subseteq \tilde{y}^0+\mathrm{Range}(A)$ and $\|Ax(y^i, z^i)-b\|\rightarrow 0$,  $\{y^i\}$ is bounded according to  (cf.\ Lemma~\ref{ybounded}). By further passing to a subsequence if necessary, we can assume that
$$x^i\rightarrow \bar{x},\quad z^i\rightarrow \bar{z},\quad (y^i)^+\rightarrow \bar{y}.$$
Then we have $\bar{x}-\bar{x}^+=0,\quad \bar{z}-\bar{z}^+=0.$\\
Therefore $\bar{x}=x(\bar{y}, \bar{z})$.
It follows from Lemma \ref{continuity} that
$Ax(\bar{y}, \bar{z})-b=0$, which further implies
$$(\bar{x}, \bar{y}, \bar{z})=(\bar{x}^+, \bar{y}^+, \bar{z}^+).$$
In view of Lemma~\ref{limitpoint}, we obtain that $\bar{x}=\bar{z}\in X^*,\ \bar{z}\in X^*$ and $(\bar{x}, \bar{y})\in W$.
Hence, we have $\dist(\bar{x}, X^*)=\dist(\bar{z}, X^*)=0$
On the other hand, we have $\max\{\dist(\bar{x}, X^*),$ $\dist(\bar{z}, X^*)\}>0$. A contradiction!
\end{proof}

%redend
%red

%redend
\section{Proof of Corollary \ref{coro2}}\label{Appendix:C}
\begin{proof}
Again we prove by contradiction.
Suppose the contrary so that there exists sequences $\{y^i\}$ and $\{z^i\}$ such that $\|Ax((y^i)^+, z^i)-b\|\rightarrow 0$ and $\lim\sup_{i\rightarrow \infty}\|x((y^i)^+,\!z^i)$ $-x^*(z^i)\|>0$.
Passing to a sub-sequence if necessary, we can assume  without loss of generality that $\lim_{i\rightarrow \infty}\|x((y^i)^+, z^i)-x^*(z^i)\|>0$.
By Lemma \ref{ybounded}, because $\|Ax((y^i)^+, z^i)-b\|\rightarrow 0$, \{$(y^i)^+\}$ is bounded. Hence, there exists at least one limit point of $\{(y^i)^+\}$. Therefore, further passing to a sub-sequence if necessary, we assume that $(y^i)^+\rightarrow \bar{y}$ and $z^i\rightarrow \bar{z}\in P$.
Combining the above analysis and using Lemma \ref{continuity}, we have
$$\|Ax(\bar{y}, \bar{z})-b\|=0.$$
It follows from the KKT condition for \eqref{K} at $z^i$ that $\bar{y}$ (the limit of $(y^i)^+$) is the optimal dual multiplier for the problem:
\begin{equation*}
\begin{array}{ccc}
\mbox{minimize} & \displaystyle f(x)+\frac{p}{2}\|x-\bar{z}\|^2 \\ [5pt]
\mbox{subject to} & Ax=b,\ x\in P,
\end{array}
\end{equation*}
implying $ x(\bar{y}, \bar{z})=x^*(\bar{z})$. This is a contradiction.
\end{proof}
\section{Proof of Inequalities \eqref{eb1}-\eqref{eb3} in Lemma \ref{error bound}}\label{Appendix:D}
\begin{proof}
We first prove \eqref{eb1}.
By the definition of $K(x,y;z)$ (cf.\;\eqref{eq:K}) and Assumption~\ref{feasible-int}(b), $\nabla K(x,z^t;y^{t+1})$ is Lipschitz continuous in $x$
\[
\|\nabla_x K(x^t,z^t;y^{t+1})-\nabla_x K(x(y^{t+1},z^t),z^t;y^{t+1})\|\le (p+L+\Gamma\sigma^2)\|x^t-x(y^{t+1},z^t)\|.
\]
%#
So the Lipschitz constant for $c\nabla_xK$ is $c(L+p+\Gamma\sigma^2)$, where $\sigma$ is the spectral norm of $A$.
Since $\gamma_K=p+\gamma>0$, it follows that $cK(x,z^t;y^{t+1})$ is strongly convex in $x$ with modulus $c(p+\gamma)$, so from \cite{[22]} that the following global error bound holds
\begin{equation}\label{geb}
\|x-[x-c\nabla K(x,z^t;y^{t+1})]_+\|\ge \sigma_1\|x-x(y^{t+1},z^t)\|, \quad \forall x\in \mathbb{R}^n,
\end{equation}
where $\sigma_1=c\gamma_K$.
%# $\frac{p+\gamma}{L+p+\Gamma\sigma^2}$.
Specializing \eqref{geb} at $x=x^t$ and noticing that
$$[x^t-c\nabla K(x^t,z^t;y^{t+1})]_+=x^{t+1},$$
we obtain \eqref{eb1}.

By the triangular inequality, we further obtain
\begin{eqnarray*}
\|x^{t+1}-x(y^{t+1},z^t)\|
&\le& \|x^t-x^{t+1}\|+\|x^{t}-x(y^{t+1}, z^t)\|\nonumber\\
&\le& \|x^t-x^{t+1}\|+\frac{1}{\sigma_1}\|x^t-x^{t+1}\|\nonumber\\
&\le&(1+\frac{1}{\sigma_1})\|x^t-x^{t+1}\|.\nonumber
\end{eqnarray*}
This shows that \eqref{eb2} holds with $\sigma_2=\sigma_1/(1+\sigma_1)$.
\end{proof}

\bibliography{references}

\begin{thebibliography}{10}

\bibitem{Ames-Hong-14}
Brendan~PW Ames and Mingyi Hong.
\newblock Alternating direction method of multipliers for sparse zero-variance
  discriminant analysis and principal component analysis.
\newblock {\em Comput. Optim. Appl.}, 3:19, 2016.

\bibitem{bertsekas}
Dimitri~P Bertsekas.
\newblock {\em Nonlinear programming}.
\newblock Athena Scientific, Belmont, MA, 1999.

\bibitem{Loan18}
Radu~Ioan Bot and Dang-Khoa Nguyen.
\newblock The proximal alternating direction method of multipliers in the
  nonconvex setting: convergence analysis and rates.
\newblock {\em ArXiv preprint arXiv:1801.01994}, 2018.

\bibitem{boyd}
Stephen Boyd, Neal Parikh, Eric Chu, Borja Peleato, Jonathan Eckstein, et~al.
\newblock Distributed optimization and statistical learning via the alternating
  direction method of multipliers.
\newblock {\em Found. Trends Mach.learn.}, 3(1):1--122, 2011.

\bibitem{Ye}
Caihua Chen, Bingsheng He, Yinyu Ye, and Xiaoming Yuan.
\newblock The direct extension of admm for multi-block convex minimization
  problems is not necessarily convergent.
\newblock {\em Math. Prog.}, 155(1-2):57--79, 2016.

\bibitem{wyin16}
Wei Deng and Wotao Yin.
\newblock On the global and linear convergence of the generalized alternating
  direction method of multipliers.
\newblock {\em J. Sci. Comput.}, 66(3):889--916, 2016.

\bibitem{eckstein-bertsekas}
Jonathan Eckstein and Dimitri~P Bertsekas.
\newblock On the douglas-rachford splitting method and the proximal point
  algorithm for maximal monotone operators.
\newblock {\em Math. Prog.}, 55(1-3):293--318, 1992.

\bibitem{Pang-complementarity}
Francisco Facchinei and Jong-Shi Pang.
\newblock {\em Finite-dimensional variational inequalities and complementarity
  problems}.
\newblock Springer Science \& Business Media, 2007.

\bibitem{Gao-Goldfarb-Curtis-2018}
Wenbo Gao, Donald Goldfarb, and Frank~E Curtis.
\newblock Admm for multiaffine constrained optimization.
\newblock {\em ArXiv preprint arXiv:1802.09592}, 2018.

\bibitem{Hajinezhad-16}
Davood Hajinezhad, Tsung-Hui Chang, Xiangfeng Wang, Qingjiang Shi, and Mingyi
  Hong.
\newblock Nonnegative matrix factorization using admm: Algorithm and
  convergence analysis.
\newblock In {\em IEEE Inter. Conf. Acoustics Speech Signal Process. (ICASSP)},
  pages 4742--4746, 2016.

\bibitem{Hong17}
Mingyi Hong, Davood Hajinezhad, and Ming-Min Zhao.
\newblock Prox-pda: The proximal primal-dual algorithm for fast distributed
  nonconvex optimization and learning over networks.
\newblock In {\em Proc. 34th Inter. Conf. Mach. Learn. (ICML)}, pages
  1529--1538, 2017.

\bibitem{Luo-Hong12}
Mingyi Hong and Zhi-Quan Luo.
\newblock On the linear convergence of the alternating direction method of
  multipliers.
\newblock {\em Math. Prog.}, 162(1-2):165--199, 2017.

\bibitem{Hong-Luo17}
Mingyi Hong, Zhi-Quan Luo, and Meisam Razaviyayn.
\newblock Convergence analysis of alternating direction method of multipliers
  for a family of nonconvex problems.
\newblock {\em SIAM J. Optim.}, 26(1):337--364, 2016.

\bibitem{JiangMa16}
Bo~Jiang, Tianyi Lin, Shiqian Ma, and Shuzhong Zhang.
\newblock Structured nonconvex and nonsmooth optimization: algorithms and
  iteration complexity analysis.
\newblock {\em ArXiv preprint arXiv:1605.02408}, 2016.

\bibitem{Jiang-Ma-Zhang-13}
Bo~Jiang, Shiqian Ma, and Shuzhong Zhang.
\newblock Alternating direction method of multipliers for real and complex
  polynomial optimization models.
\newblock {\em Optimization}, 63(6):883--898, 2014.

\bibitem{Monteiro}
Weiwei Kong, Jefferson~G Melo, and Renato~DC Monteiro.
\newblock Complexity of a quadratic penalty accelerated inexact proximal point
  method for solving linearly constrained nonconvex composite programs.
\newblock {\em arXiv preprint arXiv:1802.03504}, 2018.

\bibitem{GLI15}
Guoyin Li and Ting~Kei Pong.
\newblock Global convergence of splitting methods for nonconvex composite
  optimization.
\newblock {\em SIAM J. Optim.}, 25(4):2434--2460, 2015.

\bibitem{Ling-Xu-Yin-Wen-12}
Qing Ling, Yangyang Xu, Wotao Yin, and Zaiwen Wen.
\newblock Decentralized low-rank matrix completion.
\newblock In {\em IEEE Inter. Conf. Acoustics Speech Signal Process. (ICASSP)},
  pages 2925--2928, 2012.

\bibitem{eb-survey}
Zhi-Quan Luo and Paul Tseng.
\newblock Error bounds and convergence analysis of feasible descent methods: a
  general approach.
\newblock {\em Ann. Oper. Res.}, 46(1):157--178, 1993.

\bibitem{[22]}
Jong-Shi Pang.
\newblock A posteriori error bounds for the linearly-constrained variational
  inequality problem.
\newblock {\em Math. Oper. Res.}, 12(3):474--484, 1987.

\bibitem{[25]}
Ralph~Tyrell Rockafellar.
\newblock {\em Convex analysis}.
\newblock Princeton University Press, 2015.

\bibitem{Sun-Fevotte-14}
Dennis~L Sun and Cedric Fevotte.
\newblock Alternating direction method of multipliers for non-negative matrix
  factorization with the beta-divergence.
\newblock In {\em IEEE Inter. Conf. Acoustics Speech Signal Process. (ICASSP)},
  pages 6201--6205, 2014.

\bibitem{Wen-13}
Zaiwen Wen, Xianhua Peng, Xin Liu, Xiaoling Sun, and Xiaodi Bai.
\newblock Asset allocation under the basel accord risk measures.
\newblock {\em ArXiv preprint arXiv:1308.1321}, 2013.

\bibitem{Wen-12}
Zaiwen Wen, Chao Yang, Xin Liu, and Stefano Marchesini.
\newblock Alternating direction methods for classical and ptychographic phase
  retrieval.
\newblock {\em Inverse Probl.}, 28(11):115010, 2012.

\bibitem{Zhang-10}
Yin Zhang.
\newblock An alternating direction algorithm for nonnegative matrix
  factorization.
\newblock {\em Preprint}, 2010.

\end{thebibliography}
\bibliographystyle{plain}
\end{document}